\newtheorem{theo}{Theorem}[section]
\newtheorem{pro}[theo]{Proposition}
\newtheorem{lem}[theo]{Lemma}
\newtheorem{cor}[theo]{Corollary}
\newtheorem{claim}[theo]{Claim}
\theoremstyle{definition}
\newtheorem{defin}[theo]{Definition}
\theoremstyle{remark}
\newtheorem{rem}[theo]{Remark}
\newcommand{\ra}{\rightarrow}
\title{Refined estimates for some basic random walks on the
symmetric and alternating groups}
\author{L. Saloff-Coste\thanks{Research partially supported by NSF
grant DMS 0603806}\\
{\small Department of mathematics}\\
{\small Cornell University}
\and  J. Z\'u\~niga\thanks{Research partially supported by NSF
grant DMS 0603806 and DMS 0306194}\\
{\small Department of mathematics}\\
{\small Cornell University}}
\begin{document}
\maketitle

\begin{abstract}
We give refined estimates for the discrete time and continuous 
time versions of some basic random walks on the symmetric and alternating groups $S_n$ and 
$A_n$.  We consider the following models:
random transposition, transpose top with random, random insertion, and walks 
generated by the uniform measure on a conjugacy class.  
In the case of random walks on $S_n$ and $A_n$ generated by the uniform measure 
on a conjugacy class, we show that in continuous time 
the $\ell^2$-cuttoff has a lower bound of $(n/2)\log n$.  This result, along with the
results of M\"uller, Schlage-Puchta and Roichman, demonstrates that the 
continuous time version of these walks may take much longer to reach stationarity  
than its discrete time counterpart. 
\end{abstract}

\section{Introduction}

This work is concerned with some basic random walks on the symmetric
group, $S_n$, and the alternating group, $A_n$. 
Specifically, we are interested in
the following models: (a) Random transposition and transpose top with random;
(b) walks generated by the uniform measure on a conjugacy class, e.g.,
$4$-cycles or $k_n$-cycles with $k_n$ an increasing function of $n$;
(c) random insertion. Although these walks have been studied extensively,
we obtain here results that either improved upon known estimates
or complement those estimates.

The convergence of the random transposition walk on $S_n$
was studied by Diaconis and Shahshahani in \cite{DS}.
We present a technical improvement  of their fundamental result.
This is motivated by the role played by random transposition
in the comparison technique of \cite{DSC}: any improvement upon the 
$\ell^2$ convergence of the random transposition walk  has consequences 
for a wealth of other walks. We will illustrate this by
obtaining the best known result for the random insertion walk. These results
are also used in \cite{SCZ} to study certain time in-homogeneous versions
of the random insertion walk and this was indeed
our original motivation for developing the results presented here.
For an overview of results connected to the random transposition walk, 
see \cite{Dtran}.

The transpose top with random walk is an interesting example mentioned in
\cite{FOW} and in \cite{Dia2} but details of its $\ell^2$
analysis have never appeared in print. (This walk should not be confused 
with the more classical top to random walk studied in \cite{DFP}.)
The estimates concerning this walk that are 
proved here are used in \cite{SCZ} to obtain the best known convergence bounds for a class
of time  in-homogeneous processes called semi-random transpositions.

Random walks associated with conjugacy classes other than
the class of transpositions have been studied by
\cite{MSP,SP,LP,Roi,Ro}. For most of those walks, we show
that $\ell^2$ convergence occurs at very different times for the
discrete time process and the continuous time process.
Although this phenomenon is simple to understand a posteriori, it is
a bit surprising at first and is often overlooked.

Let us briefly describe our notation. On a finite group $G$
with identity element $e$, the
random walk started at $e$ driven by a given probability measure $q$
is the process $X_n=\xi_1\cdot \dots \cdot \xi_n$
where the $\xi_i$ are independent $G$-valued random variables
with distribution $q$.
The distribution of $X_n$ is $q^{(n)}$, the convolution of $q$
with itself, $n$ times. Any such walk admits the uniform measure
$u$ as an invariant measure. It is reversible if and only
if $q(x)=q(x^{-1})$ for all $x$. The walks studied here
all have this property. We are mostly interested in the quantity
($\chi$-square distance)
$$d_2(q^{(n)},u)=\left(|G|\sum_{G}|q^{(n)}-u|^2\right)^{1/2},\;\;
u\equiv 1/|G|.$$
This is always an upper bound for
$2\|q^{(n)}-u\|_{\mbox{\tiny TV}}$
where
$$\|q-p\|_{\mbox{\tiny TV}}=\sup_A\{q(A)-p(A)\}$$ is the total
variation distance between the probability measures $p$ and $q$.

Given such a discrete time process, we also
consider the associated continuous time process whose distribution at time
$t\in [0,\infty)$ is given by
\begin{equation*}
h_t(x)=h_{q,t}(x)
=e^{-t}\sum_{n=0}^{\infty}\frac{t^n}{n!}q^{(n)}(x).
\end{equation*}

We now state some of the results proved in this work.
Random transposition is the walk on the symmetric group $G=S_n$
driven by $q=q_{\mbox{\tiny RT}}$
where
$$q_{\mbox{\tiny RT}}(\tau)=\left\{\begin{array}{ll}
2/n^2 & \textrm{if $\tau=(i,j)$, $1\leq i,j\leq n, \;\; i\neq j,$}\\
1/n & \textrm{if $\tau=e$}\\
0 & \textrm{otherwise.}
\end{array}\right.
$$

\begin{theo}
Let $q$ be the random transposition measure on the group $S_n$, $n>14$.
For any $c\geq 0$ and $t\geq \frac{n}{2}(\log{n}+c)$, we have
$
d_2(q^{(t)},u)\leq 2 e^{-c}.$
\end{theo}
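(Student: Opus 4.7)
The plan is to use the standard Plancherel--Fourier upper bound on $S_n$, and then to bound the resulting sum partition-by-partition. Since $q_{\mathrm{RT}}$ is constant on each conjugacy class (mass $1/n$ on $\{e\}$ and $2/n^2$ on each transposition), Schur's lemma implies that for every irreducible representation $\rho^\lambda$ of $S_n$ indexed by a partition $\lambda\vdash n$, the Fourier transform $\hat q(\rho^\lambda)$ is a scalar multiple $\mu_\lambda I$ of the identity, with
\[
\mu_\lambda=\frac{1}{n}+\frac{n-1}{n}\,r(\lambda),\qquad r(\lambda)=\frac{\chi^\lambda(\tau)}{d_\lambda},
\]
where $\tau$ is any transposition and $d_\lambda=\chi^\lambda(e)$. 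Plancherel's formula then gives
\[
d_2(q^{(t)},u)^2=\sum_{\lambda\neq(n)} d_\lambda^2\,\mu_\lambda^{2t},
\]
and the task reduces to showing that this sum is at most $4e^{-2c}$ whenever $t\geq \frac{n}{2}(\log n+c)$.

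The next step is to isolate the contribution of the standard representation $\lambda=(n-1,1)$: here $d_\lambda=n-1$, $r(\lambda)=(n-3)/(n-1)$, and therefore $\mu_\lambda=1-2/n$. For $t=\frac{n}{2}(\log n+c)$ the inequality $1-x\leq e^{-x}$ gives
\[
(n-1)^2(1-2/n)^{2t}\leq (n-1)^2 e^{-4t/n}\leq e^{-2c},
\]
so this single partition already produces the advertised order of magnitude; the remainder of the work is to bound the sum over the other partitions by $3e^{-2c}$. Note that transposing $\lambda\mapsto\lambda'$ sends $r(\lambda)\mapsto -r(\lambda)$ and preserves $d_\lambda$, so partitions can be paired with their conjugates; in particular the potentially large negative eigenvalue coming from $\lambda=(2,1^{n-2})$ is suppressed by the smaller factor $((n-4)/n)^{2t}$ and is easily absorbed into the tail.

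To handle the remaining partitions, I would stratify by $k=n-\lambda_1$ (and by duality, also by $n-\lambda_1'$). Using the classical content formula
\[
r(\lambda)=\frac{1}{\binom{n}{2}}\sum_{i\geq 1}\left[\binom{\lambda_i}{2}-\binom{\lambda_i'}{2}\right],
\]
one obtains explicit upper bounds for $|\mu_\lambda|$ of the form $1-2k/n + O(k^2/n^2)$, while the hook-length formula provides dimension bounds of the form $d_\lambda\leq \binom{n}{k}g(k)$ with $g$ of moderate growth. Summing the resulting geometric series over $k=2,3,\dots,\lfloor n/2\rfloor$ (and doing the same for dual partitions), one expects the tail to be dominated by the leading term. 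The main obstacle, and the source of the \emph{refinement} announced by the theorem, is making the constants in these character-ratio and dimension estimates tight enough that the final bound reads cleanly as $2e^{-c}$, uniformly for every $n>14$; the arithmetic threshold $n>14$ almost certainly reflects a point beyond which the relevant inequalities can be verified by a single clean argument rather than checked case by case.
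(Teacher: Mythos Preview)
Your high-level plan coincides with the paper's: Plancherel plus Schur's lemma to reduce to $\sum_{\lambda\neq(n)} d_\lambda^2\mu_\lambda^{2t}$, then stratify by $j=n-\lambda_1$, bound the character ratio on each stratum, and sum. Two points of divergence are worth flagging.

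First, for the dimension input the paper does \emph{not} bound individual $d_\lambda$ via hook lengths; it uses the aggregate inequality
\[
\sum_{\lambda:\,\lambda_1=n-j} d_\lambda^2 \le \binom{n}{j}^2 j!,
\]
which is both sharper and cleaner for the purpose of extracting the constant $2$. Your proposed route through per-partition hook bounds would introduce an extra combinatorial sum over shapes with fixed $\lambda_1$ and is unlikely to deliver constants as tight. Likewise, the character-ratio bounds actually used are the explicit Diaconis--Shahshahani inequalities
\[
r(\lambda)\le 1-\tfrac{2(n-\lambda_1)(\lambda_1+1)}{n(n-1)}\ \ (\lambda_1\ge n/2),\qquad r(\lambda)\le \tfrac{\lambda_1-1}{n-1}\ \ (\text{always}),
\]
rather than an $O$-version of the content formula; the precise form matters for the constants.

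Second, you correctly identify the ``main obstacle'' but stop short of it. The entire refinement over Diaconis--Shahshahani lives in two technical propositions that control
\[
A_j=\Bigl(\tfrac{n!}{(n-j)!}\Bigr)^2\tfrac{1}{j!}\Bigl(1-\tfrac{2j}{n}\bigl(1-\tfrac{j-1}{n}\bigr)\Bigr)^{n\log n},\qquad
B_j=\Bigl(\tfrac{n!}{(n-j)!}\Bigr)^2\tfrac{1}{j!}\Bigl(1-\tfrac{j}{n}\Bigr)^{n\log n},
\]
showing $\sum_{j\le n/4}A_j\le 2$, $\sum_{n/4\le j\le n/2}A_j\le e^{2-\frac{1}{6}n\log n}$, and $\sum_{j\ge n/2}B_j\le e^{1-\frac{3}{1000}n\log n}$. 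These are proved by a careful ratio analysis (convexity of $\log(A_{j+1}/A_j)$ on $[1,n/4]$, monotonicity on $[n/4,n/2]$, Stirling comparisons of $A_{n/4}$ and $A_{n/2}$, etc.), and the threshold $n\ge 14$ enters precisely to make one of those monotonicity arguments go through. Your geometric-series heuristic is the right intuition, but turning it into the constant $2$ for all $n>14$ is the whole theorem; as written, your proposal is a restatement of the Diaconis--Shahshahani strategy rather than its refinement.
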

In \cite{DS}, Diaconis and Shahshahani proved this result
with an unspecified constant $B$ instead of $2$ in front of $e^{-c}$
and for large enough $n$. In this paper their approach is refined to obtained
the bound stated above. We also prove a similar result in continuous time
which turns out to be somewhat more difficult.
Having good control of $d_2(h_{q_{\mbox{\tiny RT}},t},u)$ is very useful
in connection with the comparison techniques of \cite{DSC}. See Section \ref{sec-RI} 
where this is used to study the random insertion walk.

Transpose top with random is the process driven by
$q(\tau)=1/n$ if $\tau\in \{(1,i), i=1,\dots,n\}$ (where $(1,1)=e$)
and $0$ otherwise.
\begin{theo}
Let $q$ be the transpose top with random  measure on the group $S_n$.
For any $c\geq 0$ and $t\geq n(\log{n}+c)$, we have
$
d_2(q^{(t)},u)\leq \sqrt{2} e^{-c}.$
\end{theo}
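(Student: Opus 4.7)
The plan is to apply the Diaconis--Shahshahani upper bound lemma
$$d_2(q^{(t)},u)^2 \le \sum_{\lambda\vdash n,\;\lambda\ne(n)} d_\lambda \,\|\hat q(\rho_\lambda)^t\|_{HS}^2$$
and then to diagonalize each Fourier matrix $\hat q(\rho_\lambda)$ along a branching decomposition. Although $q$ is not a class function, its support $\{e,(1,2),\ldots,(1,n)\}$ is invariant under conjugation by the stabilizer $S_{n-1}$ of the letter $1$, so $\hat q(\rho_\lambda)$ commutes with $\rho_\lambda(S_{n-1})$, and hence acts as a scalar on each isotypic summand of the branching decomposition $\rho_\lambda\downarrow S_{n-1}=\bigoplus_{\mu\nearrow\lambda}\rho_\mu$.

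To identify those scalars, write $n\,\hat q(\rho_\lambda)=\rho_\lambda(e+J_n-J_{n-1})$, where $J_n=\sum_{1\le i<j\le n}(i,j)$ and $J_{n-1}=\sum_{2\le i<j\le n}(i,j)$ are the sums of all transpositions in $S_n$ and in the copy of $S_{n-1}$ fixing $1$; both are central in their respective group algebras. By Frobenius's formula, $J_n$ acts on $\rho_\lambda$ as the scalar $\sum_{b\in\lambda}c(b)$, and $J_{n-1}$ acts on the $\rho_\mu$-summand of $\rho_\lambda\downarrow S_{n-1}$ as $\sum_{b\in\mu}c(b)$, so $\hat q(\rho_\lambda)$ has eigenvalues
$$\beta_{\lambda,\mu}=\frac{1+c(\lambda/\mu)}{n},\qquad \mu\nearrow\lambda,$$
each with multiplicity $d_\mu$ (here $c(\lambda/\mu)$ is the content of the unique cell of $\lambda\setminus\mu$). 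The $\ell^2$ bound thus reduces to
$$d_2(q^{(t)},u)^2\le \sum_{\lambda\ne(n)} d_\lambda\sum_{\mu\nearrow\lambda} d_\mu\left(\frac{1+c(\lambda/\mu)}{n}\right)^{\!2t}.$$

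It remains to show that at $t=n(\log n+c)$ this double sum is bounded by $2e^{-2c}$, for which I would follow the Diaconis--Shahshahani template. The dominant term comes from $\lambda=(n-1,1)$: its eigenvalues are $(n-1)/n$ (with multiplicity $n-2$) and $0$, so its contribution is $(n-1)(n-2)(1-1/n)^{2t}$, which converges to $e^{-2c}$ as $n\to\infty$. Every other shape has $\lambda_1\le n-2$, forcing every nonzero eigenvalue to satisfy $|\beta_{\lambda,\mu}|\le 1-2/n$; grouping these partitions by the defect $k=n-\lambda_1$ and using hook-length bounds of the form $d_\lambda d_\mu\le C\, n^{2k}/k!^2$ produces geometric decay in $k$, so that the tail contribution sums to at most $e^{-2c}(1+o(1))$ as well. (The ``dual'' shapes with $\lambda_1'$ large contribute essentially nothing, because the relevant eigenvalues are $-(n-2)/n$ or smaller in absolute value, giving terms of order $n^{-4}e^{-4c}$.) The main obstacle is carrying out these dimension estimates uniformly and sharply enough to remain under the clean constant $2e^{-2c}$; compared to the random transposition analysis this is slightly more delicate because each $\lambda$ now contributes several eigenvalues indexed by its removable cells, so the multiplicity bookkeeping requires the hook-length formula rather than just character values.
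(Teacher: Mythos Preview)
Your diagonalization is exactly the paper's: the $S_{n-1}$-invariance argument and the resulting eigenvalues $\beta_{\lambda,\mu}=(1+c(\lambda/\mu))/n$ with multiplicity $d_\mu$ agree with what the paper obtains by citing Flatto--Odlyzko--Wales (your $J_n-J_{n-1}$ derivation is a clean self-contained alternative). Where you diverge is in the final summation, and there you are making things harder than necessary. The paper does not track individual removable cells at all: it simply bounds every eigenvalue of $\hat q(\rho_\lambda)$ by the largest one, $\lambda_1/n$, so that $\mathrm{Tr}\bigl(\hat q(\rho_\lambda)^{2t}\bigr)\le d_\lambda(\lambda_1/n)^{2t}$; in your notation this is just $\sum_{\mu\nearrow\lambda}d_\mu=d_\lambda$ together with $\beta_{\lambda,\mu}\le\lambda_1/n$. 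Then the elementary bound $\sum_{\lambda:\lambda_1=n-j}d_\lambda^2\le\binom{n}{j}^{2} j!$ gives
\[
d_2(q^{(t)},u)^2\le\sum_{j=1}^{n-1}\Bigl(\frac{n!}{(n-j)!}\Bigr)^{2}\frac{1}{j!}\Bigl(1-\frac{j}{n}\Bigr)^{2t}\le\sum_{j\ge1}\frac{n^{2j}}{j!}\,e^{-2j(\log n+c)}\le(e-1)e^{-2c}.
\]
No hook-length formula and no per-cell multiplicity bookkeeping are needed; the argument is \emph{easier} than the random-transposition analysis, not harder, because the single crude bound $\lambda_1/n$ replaces the two-regime character estimate used there.

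On one point you are in fact more careful than the paper: the inequality $|\beta_{\lambda,\mu}|\le\lambda_1/n$ fails when $\lambda_1'$ is large (e.g.\ $\lambda=(1^n)$ has eigenvalue $(2-n)/n$), and the paper's line $\mathrm{Tr}(\hat q(\rho)^{2t})\le d_\rho\alpha_1^{2t}$ glosses over this. Your instinct to handle such dual shapes separately is correct, and your $O(n^{-4}e^{-4c})$ estimate for them is right: via the conjugation $\lambda\leftrightarrow\lambda'$ their total contribution is at most $\sum_\lambda d_\lambda^2((\lambda_1-2)/n)^{2t}$, which is the displayed sum with two extra factors of $n^{-2}e^{-2c}$, so the constant $2$ survives.
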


To illustrate our results concerning walks driven by  conjugacy classes,
consider the measure
$q_{\mathbf c_n}$ which, for each $n$,  is uniform on
$\mathbf c_n\subset S_n$, the conjugacy class
of all cycles of odd length $k_n=2m_n+1$. The corresponding walk is on $A_n$.
\begin{theo} Fix $\epsilon\in(0,1)$ and set
$t_n=\frac{n}{2}\log n$.
Referring to the continuous time process with distribution
$h_{\mathbf c_n, t}=h_{q_{\mathbf c_n}, t}$
associated to the cycle walk on $A_n$ described above,
if $m_n$ tends to infinity with $n$, we have (with $u_n\equiv1/|A_n|=2/n!$)
$$\lim_{n\rightarrow \infty}d_2(h_{\mathbf c_n,(1+\epsilon)t_n},u_n)=0
\mbox{ and } 
\lim_{n\rightarrow \infty}d_2(h_{\mathbf c_n, (1-\epsilon)t_n},u_n)=\infty.$$\end{theo}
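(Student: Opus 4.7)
The statement contains an upper bound ($d_2(h_{\mathbf{c}_n,(1+\epsilon)t_n},u_n)\to 0$) and a lower bound ($d_2(h_{\mathbf{c}_n,(1-\epsilon)t_n},u_n)\to\infty$). The upper bound is essentially a consequence of combining Plancherel with the character estimates of Roichman and M\"uller--Schlage-Puchta; the lower bound, which is the novel content here, is a soft observation about the continuous-time process that uses nothing about the structure of $\mathbf{c}_n$ beyond $e\notin\mathbf{c}_n$.

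For the lower bound the key is that since $h_t=e^{-t}\sum_{n\geq 0}(t^n/n!)q_{\mathbf{c}_n}^{(n)}$, keeping only the $n=0$ term gives $h_t(e)\geq e^{-t}$ for any driver. Using the trivial inequality
$$d_2(h_t,u_n)^2=|A_n|\sum_x(h_t(x)-u_n(x))^2\geq |A_n|\,(h_t(e)-u_n(e))^2$$
together with $u_n(e)=2/n!$ yields $d_2(h_t,u_n)^2\geq |A_n|(e^{-t}-2/n!)^2$. At $t=(1-\epsilon)t_n=(1-\epsilon)(n/2)\log n$ one has $e^{-t}=n^{-(1-\epsilon)n/2}$, much larger than $2/n!\sim e^{-n\log n+n}$, so $h_t(e)-u_n(e)\geq e^{-t}/2$ for $n$ large. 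Stirling then gives
$$\log d_2(h_t,u_n)^2 \geq \log|A_n|-2t-\log 4 \geq \epsilon n\log n-O(n),$$
which tends to $+\infty$.

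For the upper bound, I would start from the spectral expansion $d_2(h_t,u_n)^2=\sum_\rho d_\rho^2 e^{-2t(1-\hat q(\rho))}$ (sum over non-trivial $A_n$ irreducibles, with $\hat q(\rho)=\chi_\rho(c)/d_\rho$ since $q_{\mathbf{c}_n}$ is class-central), reduce to a sum over $S_n$ diagrams (modulo the usual halving/splitting for self-conjugate $\lambda$, noting that the $S_n$-class of $c$ does not split in $A_n$ for $n\geq k_n+2$), and split according to whether $\max(\lambda_1,\lambda_1')\leq(1-\delta)n$ or not. On the generic part, Roichman's bound forces $|\hat q(\lambda)|\to 0$ (the exponent grows with $k_n\to\infty$), so contributions are dominated by $|A_n|e^{-2t(1-o(1))}$, which vanishes at $t=(1+\epsilon)t_n$ by the same Stirling estimate. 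On the non-generic part, $\lambda=(n-r,\mu)$ with $r\leq\delta n$, and a Murnaghan--Nakayama computation gives the leading estimate $\hat q(\lambda)\approx 1-rk_n/n$, so the single-$r$ contribution is at most of order $n^{r(2-(1+\epsilon)k_n)}$, summable in $r$ and vanishing termwise for $k_n\geq 3$.

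The main obstacle lies in the non-generic part of the upper bound when $k_n$ is comparable to $n$: the simple row-1 rim-hook may not fit in $\lambda$, forcing a careful analysis of crossing rim-hooks, and the range of $r$ intermediate between $O(1)$ and $\delta n$ needs a uniform estimate rather than a termwise one. The lower bound, by contrast, is proved in a few lines and the hypothesis $m_n\to\infty$ plays no role there; this is precisely the phenomenon the theorem highlights, namely that the continuous-time process can be much slower to mix than its discrete-time counterpart.
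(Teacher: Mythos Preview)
Your lower bound argument is correct and is genuinely different from, and more elementary than, the paper's. The paper proves the lower bound by isolating a single term in the Plancherel expansion corresponding to an almost-square Young diagram $\lambda_n$ (fitting in a $\lceil\sqrt{n}\rceil\times\lceil\sqrt{n}\rceil$ box): the hook formula gives $d_{\lambda_n}\geq(\sqrt{n}/4e)^n$, and the character-ratio bounds of Roichman and of Rattan--\'Sniady give $|\chi_{\lambda_n}(c_n)/d_{\lambda_n}|=o(1)$, so that this single term already blows up at $(1-\epsilon)t_n$. Your argument bypasses representation theory entirely via $h_t(e)\ge e^{-t}$ and Stirling. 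What the paper's route buys is a spectral explanation of the phenomenon (the blow-up is caused by a huge multiplicity of near-zero eigenvalues, exactly the mechanism described informally in the introduction); what your route buys is a two-line proof that applies verbatim to \emph{any} driving measure with $q(e)=0$, not just class measures.

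Your upper bound, however, is only a sketch with a real gap you yourself flag, and the paper's proof takes a quite different and much shorter path that avoids the obstacle altogether. Rather than splitting diagrams into generic and non-generic and estimating characters on each piece, the paper converts the continuous-time sum back into a discrete-time one: using the elementary inequality $2\log(1-x)\ge -wx$ for $0\le x\le 1-1/w$ with $w=(1+\epsilon)/\epsilon$, one gets $e^{-2(1+\epsilon)t_n(1-\beta_i)}\le \beta_i^{4\epsilon t_n}$ whenever $\beta_i\ge 1/w$, so the large-eigenvalue part of $d_2(h_{(1+\epsilon)t_n},u_n)^2$ is bounded by $2\,d_2(q^{(2\epsilon t_n)},\overline u_n)^2$. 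Since $k_n\to\infty$, the discrete-time result of M\"uller--Schlage-Puchta applies at time $2\epsilon t_n=\epsilon n\log n$ and kills this piece; the small-eigenvalue piece is handled crudely by $n!\,e^{-2(1+\epsilon)t_n(1-1/w)}=n!\,e^{-n\log n}\to 0$. This reduction sidesteps your difficulty with crossing rim-hooks and intermediate $r$ entirely: no direct character estimate on the non-generic diagrams is needed beyond what is already packaged in the discrete-time theorem.
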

This result shows an $\ell^2$-cutoff at time $(n/2)\log n$.
When $k_n< cn$ for some $c\in (0,1)$,
Roichman \cite{Roi} shows that the associated discrete time process
has a mixing time in $\ell^2$ of order $(n/k_n)\log n$.
Roichman's results are improved in \cite{MSP,SP}.
As $k_m=2m_n+1\rightarrow \infty$, the discrete mixing time
$(n/k_n)\log n$ is much smaller than the continuous
cutoff time $(n/2) \log n$. The explanation is simple. Consider the 
eigenvalues of the walk driven by $q_{\mathbf c_n}$, that is, the eigenvalues
of the convolution operator $f\ra f*q_{\mathbf c_n}:\ell^2(u_n)\ra\ell^2(u_n)$, call these 
eigenvalues $\alpha_i$.  In continuous time, the $\ell^2$ cutoff time is controlled 
by the very large number of very small eigenvalues. These small eigenvalues
contribute significantly in continious time because they apear in the 
form $e^{-t(1-\alpha_i)}$.
In discrete time, these small eigenvalues do not contribute much since they appear
in the form $\alpha_i^t$.
Although the explanation is simple, verifying that this is indeed
the case is not an easy task. We will prove similar results 
for general conjugacy classes.

\section{Review and notation}
We refer the reader to \cite{Dia,S-K} for careful introduction
to random walks on finite groups. We briefly review some of the needed
material below.
\subsection{Cutoffs}
Many examples of random walks on groups
that have been studied demonstrate a unique behavior called the
cutoff phenomenon. This was first studied in \cite{Al,AD,DS}.
See also \cite{Dcut,GYSC,StF,S-K}.
\begin{defin}
Let $(G_n)_0^{\infty}$ be a sequence of finite groups and denote by
$u_n$ the uniform measure on $G_n$.
For each $n\geq 0$ consider the random
walk on $G_n$ driven by the measure $q_n$.
The sequence $((G_n,q_n))_0^{\infty}$
is said to have total variation cutoff (resp. $\ell^2$)
if there is a sequence $(t_n)_0^{\infty}$
with $t_n\rightarrow \infty$ such that for any $\epsilon\in(0,1)$
\begin{itemize}
\item[(1)] if $k_n=(1+\epsilon)t_n$ then
$d_{\mbox{\tiny{\em TV}}}(p_n^{(k_n)},u_n)\ra 0$  (resp.
$d_2(p_n^{(k_n)},u_n)\ra 0$);
\item[(2)] if $k_n=(1-\epsilon)t_n$ then
$d_{\mbox{\tiny{\em TV}}}(p_n^{(k_n)},u_n)\ra 1$ (resp.
$d_2(p_n^{(k_n)},u_n)\ra \infty$).
\end{itemize}
The sequence $((G_n,q_n))_0^{\infty}$ is said to demonstrate a total variation
(resp. $\ell^2$) pre-cutoff if there exist constants $0<a<b$ such that
\begin{itemize}
\item[(1)] $\liminf_{n\ra\infty}d_{\mbox{\tiny{\em TV}}}(p_n^{(at_n)},u_n)> 0$
(resp. $\liminf_{n\ra\infty}d_2(p_n^{(at_n)},u_n)> 0$);
\item[(2)] $\lim_{n\ra\infty}d_{\mbox{\tiny{\em TV}}}(p_n^{(bt_n)},u_n)=0$
(resp. $\liminf_{n\ra\infty}d_2(p_n^{(bt_n)},u_n)=0$).
\end{itemize}
\end{defin}
Similar definitions apply in continuous time. Diaconis and Shahshahani
proved in \cite{DS} that the random transposition walk on $S_n$ has a cutoff
(both in total variation and $\ell^2$) at time $(n/2)\log n$.
For a overview of other results in this direction, see \cite{Dcut,S-K}.

\subsection{Eigenvalues and representation theory}\label{sec-RepT}
It is well known that for reversible finite Markov chains, the
$\chi$-square distance can be expressed in terms of eigenvalues
and eigenfunctions. See, e.g., \cite{StF}. For a reversible random walk
on a finite group $G$ driven by $q$,
the expression simplifies and the eigenvectors drop out.
If we let $\beta_i$, $i=0,\dots,|G|-1$, be the eigenvalues
of the operator of convolution by $q$ acting on $\ell^2(G)$,
in non-increasing order and repeated according to multiplicity, we have
\begin{equation}\label{dist-spect}
d_2(q^{(t)},u)^2=\sum_{i=1}^{|G|-1}\beta_i^{2t}
\;\;\text{and}\;\;
d_2(h_t,u)^2=\sum_{i=1}^{|G|-1}e^{-2t(1-\beta_i)}.
\end{equation}

Representation theory provides a tool that can be helpful to compute
eigenvalues. We give a very brief review of these methods.
All the material in this section can be found in greater detail
in \cite{Dia,Sa}.
A {\em representation} of a finite group $G$ on a vector space $V$ is
a homomorphism  $\rho:G\ra GL(V)$ where $GL(V)$ is the group of general linear
transformations of $V$.  We say that
$\rho$ has dimension $\mbox{d}_{\rho}$ where $\mbox{d}_{\rho}$ is equal to the
dimension of $V$.  Let $W\subset V$, if $\rho W=W$ then $\rho|_W$  is called
a {\em subrepresentation} of $\rho$.
A representation $\rho$ is called {\em irreducible}
if it admits no nontrivial subrepresentation.
The {\em character} of a representation
$\rho$ at $s\in G$ is $\chi_{\rho}=\mbox{Tr}(\rho(s))$.
Characters are constant under conjugation, i.e.
for any $x,y\in G$ then
\begin{equation*}
\chi_{\rho}(x^{-1}yx)=\chi_{\rho}(y).
\end{equation*}

For $f:G\ra\mathbb{R}$, the {\em Fourier transform}
of $f$ at $\rho$ is
\begin{equation*}
\widehat{f}(\rho)=\sum_{s\in G}f(s)\rho(s).
\end{equation*}
The Fourier transform converts convolution of functions into multiplication
of matrices (or composition of linear maps)
$\widehat{f*g}(\rho)=\widehat{f}(\rho)\widehat{g}(\rho)$.
If $G$ is a finite group and if $f,g$ are any two functions taking values
on $G$ then the {\em Plancharel formula}
relates the convolution of $f$ and $g$ at $e$ to
the Fourier transform as follows
\begin{equation*}
f*g(e)=\sum_{s\in G}f(s^{-1})g(s)=\frac{1}{|G|}
\sum_{\rho}\mbox{d}_{\rho}\mbox{Tr}(\widehat{f}(\rho)\widehat{g}(\rho))
\end{equation*}
\noindent where $|G|$ is the order of $G$ and the sum is over all
(equivalent classes of) irreducible representations of $G$.
In what follows $\rho\neq 1$ means that $\rho$ is not the trivial representation.
The Plancharel formula is used to obtain the following proposition.

\begin{pro}\label{dist-trace}
Let $G$ be a finite group equipped with a probability measure $q$
satisfying $q(x)=q(x^{-1})$, $x\in G$. We have
\begin{equation}\label{eq-disc-trace}
d_2(q^{(t)},u)^2=\sum_{\rho\neq 1}\mbox{\em d}_{\rho}\mbox{\em Tr}(\widehat{q}(\rho)^{2t}).
\end{equation}
\end{pro}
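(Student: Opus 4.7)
The plan is to start from the definition
\[
d_2(q^{(t)},u)^2 = |G|\sum_{x\in G}(q^{(t)}(x)-u(x))^2
\]
and rewrite the right-hand side as a convolution evaluated at the identity, so that the Plancherel formula already recorded in the excerpt can be applied directly.

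First I would exploit the symmetry assumption: since $q(x)=q(x^{-1})$, an easy induction on convolutions gives $q^{(t)}(x)=q^{(t)}(x^{-1})$, and of course $u(x)=u(x^{-1})$. Setting $f=q^{(t)}-u$ this means $f(x)=f(x^{-1})$, so
\[
\sum_{x\in G}f(x)^2 \;=\; \sum_{x\in G} f(x^{-1})f(x) \;=\; (f*f)(e),
\]
using the definition $(f*f)(e)=\sum_x f(x^{-1})f(x)$ recalled in the paper. Applying Plancherel to $f*f(e)$ gives
\[
|G|\cdot (f*f)(e) \;=\; \sum_{\rho} d_\rho \operatorname{Tr}\bigl(\widehat{f}(\rho)^2\bigr),
\]
so everything reduces to computing $\widehat{f}(\rho)=\widehat{q^{(t)}}(\rho)-\widehat{u}(\rho)$.

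Next I would use two standard facts. The convolution-to-multiplication property of the Fourier transform, also stated in the excerpt, yields $\widehat{q^{(t)}}(\rho)=\widehat{q}(\rho)^{t}$. For $\widehat{u}(\rho)$ one invokes the orthogonality relation $\sum_{s\in G}\rho(s)=0$ for any non-trivial irreducible representation $\rho$ (the matrix $|G|^{-1}\sum_s\rho(s)$ projects onto the trivial isotypic component of $\rho$, which vanishes when $\rho\neq 1$). Consequently $\widehat{u}(\rho)=0$ for $\rho\neq 1$, while for the trivial representation $\widehat{q^{(t)}}(1)=1=\widehat{u}(1)$, so in both cases the trivial representation contributes zero. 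This gives $\widehat{f}(\rho)=\widehat{q}(\rho)^{t}$ for $\rho\neq 1$ and $\widehat{f}(1)=0$.

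Substituting into the Plancherel identity and combining powers under the trace yields
\[
d_2(q^{(t)},u)^2 \;=\; |G|\cdot (f*f)(e) \;=\; \sum_{\rho\neq 1} d_\rho \operatorname{Tr}\bigl(\widehat{q}(\rho)^{2t}\bigr),
\]
which is exactly the claimed formula. There is no real obstacle here; the only subtlety worth flagging is that the symmetry hypothesis $q(x)=q(x^{-1})$ is precisely what is needed to turn a sum of squares into a convolution at $e$ — without it one obtains $f*\check{f}(e)$ and a corresponding expression involving $\widehat{q}(\rho)^{t}\widehat{q}(\rho)^{t*}$ rather than $\widehat{q}(\rho)^{2t}$.
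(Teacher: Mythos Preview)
Your proof is correct and is precisely the approach the paper indicates: the paper merely says ``The Plancherel formula is used to obtain the following proposition'' without giving details, and your argument supplies exactly those details. The only remark is that the paper leaves the proof at that one-line hint, so your write-up is more complete than the paper's own treatment.
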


In general, it is very difficult to  estimate
$\mbox{Tr}(\widehat{q}(\rho)^{t})$. However, in the
case were $q$ is a class function, i.e.,
$q(x^{-1}yx)=q(y)$, for all $x,y\in G$,
a celebrated lemma of Schur provides a nice analysis.
If $\rho$ is an irreducible representation and $(\mathcal{C}_j)_1^m$ are the
conjugacy classes of the group $G$ then  $\widehat{q}(\rho)$ is a constant
multiple of
the identity matrix. This yields
\begin{equation*}
\widehat{q}(\rho)=I_{\mbox{\tiny d}_\rho} \cdot
\left(\sum_{j=1}^mq(\mathcal{C}_j)\frac{\chi_{\rho}(c_j)}{\mbox{d}_{\rho}}\right)
\end{equation*}
\noindent where $c_j\in \mathcal{C}_j$.
For a proof of this fact see \cite{Dia,Dia2}. The next proposition now follows

\begin{pro}\label{dist-character}
Let $G$ be a finite group and $q$ a probability measure on $G$
satisfying $q(x^{-1})=q(x)$, $x\in G$.
If $q$ is constant on conjugacy classes then
\begin{eqnarray}
d_2(q^{(t)},u)^2
&=&\sum_{\rho\neq 1}\mbox{\em d}_{\rho}^2\left(\sum_{j=1}^mq(\mathcal{C}_j)\frac{\chi_{\rho}(c_i)}{\mbox{\em d}_{\rho}}\right)^{2t}
\text{ and}\nonumber\\
d_2(h_{\mathcal C,t},u)^2
&=&\sum_{\rho\neq 1}\mbox{\em d}_{\rho}^2\exp\left(-2t\left( 1-\sum_{j=1}^mq(\mathcal C_j)\frac{\chi_{\rho}(c_i)}{\mbox{\em d}_{\rho}}\right)\right)\label{main=}.
\end{eqnarray}
\end{pro}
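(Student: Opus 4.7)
The plan is to deduce this proposition directly from the previously established formulas by invoking Schur's lemma in the class-function setting, exactly as the paper has foreshadowed.

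For the discrete-time identity, I would start from Proposition \ref{dist-trace} and exploit the fact, quoted just before the statement, that when $q$ is constant on conjugacy classes the Fourier coefficient $\widehat{q}(\rho)$ is a scalar multiple of the identity. Concretely, for each irreducible $\rho\neq 1$, set
\[
\lambda_\rho=\sum_{j=1}^m q(\mathcal{C}_j)\,\frac{\chi_\rho(c_j)}{d_\rho},
\qquad\text{so that}\qquad \widehat{q}(\rho)=\lambda_\rho\, I_{d_\rho}.
\]
Then $\widehat{q}(\rho)^{2t}=\lambda_\rho^{2t}\,I_{d_\rho}$ and $\mathrm{Tr}(\widehat{q}(\rho)^{2t})=d_\rho\,\lambda_\rho^{2t}$. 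Plugging this into \eqref{eq-disc-trace} and pairing the extra $d_\rho$ from the Plancherel weight with the trace yields the stated $d_\rho^2$ in front. This already gives the first equation of the proposition with no further work.

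For the continuous-time identity, I would use the eigenvalue expansion on the right of \eqref{dist-spect} rather than a second trace computation. The point is that convolution by $q$ on $\ell^2(G)$ decomposes, via the Peter--Weyl/Fourier decomposition of the regular representation, into blocks indexed by the irreducibles $\rho$; the $\rho$-isotypic block has dimension $d_\rho^2$, and on it convolution by $q$ acts as $\widehat{q}(\rho)$ applied $d_\rho$ times. Since $\widehat{q}(\rho)=\lambda_\rho I_{d_\rho}$, the eigenvalue $\lambda_\rho$ appears in the spectrum of the convolution operator with total multiplicity $d_\rho\cdot d_\rho=d_\rho^2$. Substituting into the second formula of \eqref{dist-spect} gives
\[
d_2(h_{\mathcal C,t},u)^2=\sum_{\rho\neq 1}d_\rho^2\,e^{-2t(1-\lambda_\rho)},
\]
which, after expanding $\lambda_\rho$, is exactly \eqref{main=}.

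There is no real obstacle; the only point that deserves some care is the multiplicity bookkeeping in the continuous-time step, namely that each scalar eigenvalue $\lambda_\rho$ of $\widehat q(\rho)$ is counted $d_\rho^2$ times in the spectrum of the full convolution operator on $\ell^2(G)$. Once that is cleanly justified (either by citing Peter--Weyl or, alternatively, by recognising that \eqref{eq-disc-trace} already encodes this multiplicity and then applying the same reasoning to $e^{-2t(I-\widehat q(\rho))}$ via its power series), both identities drop out of Schur's lemma essentially by inspection.
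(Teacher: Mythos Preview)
Your proposal is correct and follows the same route the paper intends: the paper simply states that the proposition ``now follows'' from Schur's lemma applied to the scalar $\widehat{q}(\rho)=\lambda_\rho I_{d_\rho}$, and your derivation spells out precisely that computation for the discrete case via Proposition~\ref{dist-trace} and for the continuous case via the eigenvalue formula~\eqref{dist-spect} together with the multiplicity discussion at the end of Section~\ref{sec-RepT}. There is nothing to add.
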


To connect more directly representation theory with the usual spectral 
decomposition, let $\rho:G\ra GL(V)$ a representation of $G$ on a finite vector space 
$V$ equipped with an invariant Hermitian product $\langle\cdot,\cdot\rangle$.
Fix a probability measure $q$ and consider the linear transformation
$\widehat{q}(\rho): V\ra V$.
Suppose  $e_i,e_j$ are unit vectors in $V$ and that $e_i$ is an eigenvector
of $\widehat{q}(\rho)$ with eigenvalue $\gamma_i$.
Set $\phi_{i,j,\rho}(x)=\langle\rho(x)e_i,e_j\rangle$.
We claim that $\phi_{i,j,\rho}$ is an eigenfunction for 
$f\mapsto f*q$ with
\begin{equation*}
f*q(x)=\sum_yf(xy^{-1})q(y)
\end{equation*}
on $\ell^2(G)$ with eigenvalue $\gamma_j$. Indeed,
\begin{eqnarray*}
\phi_{i,j,\rho}*q(x)
&=&\sum_yq(y)\langle\rho(xy^{-1})e_i,e_j\rangle
=\left\langle\rho(x)e_i,\sum_yq(y)\rho(y)e_j\right\rangle\\
&=&\langle\rho(x)e_i,\widehat{q}(\rho)e_j\rangle
= \gamma_j\langle \rho(x)e_i,e_j\rangle= \gamma_j\phi_{i,j,\rho}(x).
\end{eqnarray*}
Now, if $q$ is symmetric and thus  $\widehat{q}(\rho)$ is diagonalizable in an orthonormal 
basis $(e_i)_1^{\mbox{\tiny d}_\rho}$ then the construction above yields $\mbox{d}_\rho$ 
eigenvalues and $\mbox{d}_\rho^2$ orthonormal eigenvectors in $\ell^2(G)$,
each eigenvalue having multiplicity $\mbox{d}_\rho$. Furthermore, if $\rho$, $\rho'$, 
are two inequivalent irreducible representations the corresponding eigenvectors are 
orthogonal (some of the eigenvalues may be the same). A proof of the orthogonality of 
$\phi_{i,j,\rho}$ is given in Corollary 4.10 of \cite{Knapp}.
Hence, this produces  $|G|$ orthonormal eigenfunctions 
since  $\sum_{\rho}d_{\rho}^2=|G|$ where the sum is
taken over all (equivalent classes of) irreducible representations.

For future reference we mention the well known fact that
irreducible representations on $S_n$ are indexed by the Young diagram
with $n$ boxes (see \cite{Sa}).

\begin{defin}
Let $\lambda=(\lambda_1,\dots,\lambda_m)$ be a partition of\; $n$ so that
$\lambda_1\geq\lambda_2\geq\dots\geq\lambda_m$ and $\sum_{i=1}^m\lambda_i=n$.
$\lambda$ is called a Young diagram of $n$ boxes and $\lambda_i$ denotes the
number of boxes in the $i$-th row of the diagram.  
\end{defin}

\begin{figure}[h]

\begin{center}
\caption{The Young diagram for $\lambda=(5,4,2,1)$} \vspace{.05in}
\end{center}

\begin{picture}(300,100)(-100,0)

\put(0,100){\line(1,0){100}}
\put(0,80){\line(1,0){100}}
\put(0,60){\line(1,0){80}}
\put(0,40){\line(1,0){40}}
\put(0,20){\line(1,0){20}}
\put(0,100){\line(0,-1){80}}
\put(20,100){\line(0,-1){80}}
\put(40,100){\line(0,-1){60}}
\put(60,100){\line(0,-1){40}}
\put(80,100){\line(0,-1){40}}
\put(100,100){\line(0,-1){20}}
\end{picture}

\end{figure}
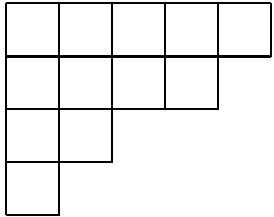

\noindent The association of an irreducible representation to a Young diagram
will provide a key tool to calculate the normalized character
${\chi_{\rho}(\cdot)}/{\mbox{\em d}_{\rho}}$ of an irreducible representation $\rho$ and the eigenvalues of many of the walks we study.
This technique is illustrated in the following sections.

\section{Transpose top with random}

Consider the following shuffling method of a deck of $n$ cards:
pick a card uniformly
at random from the deck and transpose it with the top card. This shuffling
scheme is described by the measure $q$ on the symmetric group $G=S_n$ where
\begin{displaymath}
q(\tau)=\left\{\begin{array}{ll}
1/n & \textrm{if $\tau=(1,j)$, $1\leq j\leq n$}\\
0 & \textrm{otherwise.}
\end{array}\right.
\end{displaymath}
This walk is called transpose top with random.

In order to establish an upper bound for the $\ell^2$ mixing time,
the tools from  group representation presented in section \ref{sec-RepT}
are used to calculate the eigenvalues of $q$.  Most of the needed computations are in 
\cite{FOW} and the procedure is outlined in \cite{Dia2} where it is stated that transpose 
top with random has a cutoff time of $n\log n$. The following theorem gives a more 
precise upper bound. This result is used in \cite{SCZ} to study a class of time 
inhomogeneous chains called semi-random transpositions.

\begin{theo}\label{thm-tr-disc}
Let $q$ be the transpose top with random measure on the group $S_n$.
If $n\geq 1$, $c\geq 0$, and $t\geq n(\log{n}+c)$
\begin{equation*}
d_{2}(q^{(t)},u)\leq\sqrt{2}\;e^{-c},\;\;\;
d_{2}(h_{q,t},u)\leq\sqrt{2}\;e^{-c}.
\end{equation*}
\end{theo}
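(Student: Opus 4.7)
Since $q$ is not a class function, Proposition \ref{dist-character} is unavailable and one must use Proposition \ref{dist-trace} after explicitly diagonalizing each Fourier operator $\widehat{q}(\rho_\lambda)$, where $\rho_\lambda$ denotes the irreducible representation of $S_n$ indexed by $\lambda\vdash n$. The key observation is that $q=\tfrac{1}{n}\sum_{j=1}^{n}\delta_{(1,j)}$ is invariant under conjugation by the copy of $S_{n-1}\subset S_n$ fixing position~$1$, because such conjugation merely permutes the transpositions $(1,j)$. Consequently $\widehat{q}(\rho_\lambda)$ commutes with the image of this $S_{n-1}$, and by Schur's lemma it acts as a scalar on each component of the branching decomposition $\mathrm{Res}^{S_n}_{S_{n-1}}\rho_\lambda=\bigoplus_{\mu\subset\lambda}\rho_\mu$, where $\mu\vdash n-1$ ranges over partitions obtained from $\lambda$ by deleting a removable cell. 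Following the direct trace computation in \cite{FOW}, this scalar equals $(1+c(\lambda/\mu))/n$, with $c(\lambda/\mu)=j-i$ the content of the removed cell at position $(i,j)$; it appears with multiplicity $d_\mu$.

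Substituting into \eqref{eq-disc-trace} and re-indexing by $k:=n-1-c(\lambda/\mu)\in\{1,\dots,2n-2\}$, one obtains
\[
d_2(q^{(t)},u)^2=\sum_{k\ge 1} A_k\Bigl(1-\tfrac{k}{n}\Bigr)^{2t},\qquad A_k:=\sum_{\substack{\mu\subset\lambda\\ c(\lambda/\mu)=n-1-k}}d_\lambda d_\mu,
\]
with the continuous-time analog obtained by replacing $(1-k/n)^{2t}$ by $e^{-2kt/n}$. The range $k\ge 1$ reflects the exclusion of the trivial representation $\lambda=(n)$ (whose removable cell has content $n-1$). A removable cell of content $n-k$ forces the first row of $\lambda$ to have length at least $n-k+1$, so for small $k$ the possible pairs $(\lambda,\mu)$ are parametrized by a partition of at most $k-1$ cells placed below the first row. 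Using the hook-length formula together with the branching identity $\sum_{\lambda\supset\mu}d_\lambda=n\,d_\mu$, one establishes a bound of the form $A_k\le C_k\, n^{2k}$ with constants $C_k$ decaying fast enough that $\sum_k C_k e^{-2kc}\le 2e^{-2c}$ for every $c\ge 0$.

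For $t\ge n(\log n+c)$ the inequality $(1-k/n)^{2t}\le e^{-2kt/n}\le e^{-2kc}/n^{2k}$ gives $A_k(1-k/n)^{2t}\le C_k e^{-2kc}$, and summing in $k$ yields $d_2(q^{(t)},u)^2\le 2e^{-2c}$, i.e. $d_2(q^{(t)},u)\le\sqrt{2}\,e^{-c}$. The continuous-time bound follows by the same decomposition with the Poisson weights $e^{-2kt/n}$ in place of $(1-k/n)^{2t}$, the two factors being comparable under the same hypothesis on $t$. The main obstacle is establishing the sharp estimate on $A_k$ uniformly in $k$: a naive operator-norm bound such as $\sum_\lambda d_\lambda^2(1-1/n)^{2t}\sim n!\,e^{-2t/n}$ is hopelessly weak, so one must genuinely exploit both the orthogonality built into $\mathrm{Tr}(\widehat{q}(\rho_\lambda)^{2t})$ and the combinatorial sparsity of the partitions whose extremal removable content is close to $n-1$.
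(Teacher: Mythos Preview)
Your diagonalization of $\widehat{q}(\rho_\lambda)$ via the branching rule is correct and is exactly the paper's starting point: the eigenvalues are $(1+c(\lambda/\mu))/n$, equivalently $(\lambda_i-i+1)/n$ in the paper's indexing, with the largest at most $\lambda_1/n$. The difference lies in how the resulting double sum is controlled. The paper does \emph{not} re-index by content. It simply bounds $\mbox{Tr}(\widehat{q}(\rho_\lambda)^{2t})\le d_{\rho_\lambda}(\lambda_1/n)^{2t}$, groups by $j=n-\lambda_1$, and invokes the ready-made dimension estimate $\sum_{\lambda:\,\lambda_1=n-j}d_{\rho_\lambda}^2\le\binom{n}{j}^{2}j!$ from \cite{Dia,FOW}. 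This gives in two lines
\[
d_2(q^{(t)},u)^2\;\le\;\sum_{j\ge1}\binom{n}{j}^{2}j!\,(1-j/n)^{2t}\;\le\;\sum_{j\ge1}\frac{n^{2j}}{j!}\,e^{-2j(\log n+c)}\;=\;(e-1)\,e^{-2c},
\]
and the continuous-time case is identical. Your route instead requires $A_k\le C_k\,n^{2k}$ with $\sum_k C_k e^{-2kc}\le 2e^{-2c}$, which you correctly flag as ``the main obstacle'' but do not actually establish; controlling $\sum d_\lambda d_\mu$ over branching pairs is harder than controlling $\sum d_\lambda^2$ over $\lambda$ alone, and the hook-length formula together with the branching identity you cite do not by themselves deliver the needed uniform constants.

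There is also a concrete error in your final step: the inequality $(1-k/n)^{2t}\le e^{-2kt/n}$ is false for $k>n$. At $k=2n-2$ (the sign representation $\lambda=(1^n)$) the left side is $((n-2)/n)^{2t}\sim e^{-4t/n}$, while your right side is $e^{-(4-4/n)t}$, too small by a factor of order $e^{4t}$. The entire negative-eigenvalue range $n<k\le 2n-2$ therefore needs a separate argument---for instance the transpose symmetry $A_k=A_{2n-2-k}$ combined with $|1-k/n|\le 1-2/n$ there---before your summation can close. The paper's cruder grouping by $\lambda_1$ sidesteps this bookkeeping entirely.
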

\begin{proof}
By Proposition \ref{dist-trace},
\begin{equation*}
d_2(q^{(t)},u)^2=\sum_{\rho\neq 1}\mbox{d}_{\rho}\mbox{Tr}(\widehat{q}(\rho)^{2t}).
\end{equation*}
\noindent Even though $q$ is not constant on conjugacy classes \cite{Dia2}
notes that $q$ is invariant under conjugation by elements of $S_{n-1}$ where
\begin{equation*}
S_{n-1}=\left\{ \tau\in S_{n}|\tau(1)=1\right\}.
\end{equation*}
\noindent Using this fact it is shown that $\widehat{q}(\rho)$ is a diagonal matrix (with real entries). 
See \cite{FOW,Dia2}. Therefore
\begin{equation}\label{tr-bound1}
\mbox{Tr}\left(\widehat{q}(\rho)^{2t}\right)=
\sum_{i=1}^{{\mbox{\tiny{d}}}_{\rho}}\alpha_i^{2t}\leq \mbox{d}_{\rho}\alpha_1^{2t}
\end{equation}
\noindent where $\alpha_1\geq\cdots\geq \alpha_{{\mbox{\tiny{d}}}_{\rho}}$
are the eigenvalues of $\widehat{q}(\rho)$.

To compute $\alpha_i$, consider $M=\sum_{i=2}^n\rho((1, i))$.
Let $\lambda=(\lambda_1,\dots,\lambda_m)$ be the Young diagram associated
to the irreducible representation $\rho$.  Let
$\sigma_1\leq \cdots\leq \sigma_{\mbox{\tiny{d}}_{\rho}}$
be the eigenvalues of $M$.
In \cite{FOW} it is shown
that for $1\leq i\leq \mbox{d}_{\rho}$ then
\begin{equation*}
\sigma_i=\lambda_i-i.
\end{equation*}
The multiplicity of each $\sigma_i$ is also described in \cite{FOW}.
We do not need this for the present proof but, to give an example,
if $\lambda=(n-1,1)$ then  the eigenvalues of $M$ are $\sigma_1= n-2$ with multiplicity 
$n-2$ and $\sigma_2= -1$ with multiplicity $1$.

\noindent As
\begin{equation*}
\widehat{q}(\rho)
=\sum_{\tau\in G}q(\tau)\rho(\tau)=\sum_{i=1}^n\left(\frac{1}{n}\right)\rho((1, i))
=\frac{M+\rho(e)}{n}=\frac{M+I}{n}
\end{equation*}
\noindent where $I$ is the identity matrix of dimension $\mbox{d}_{\rho}$,
we easily obtain the eigenvalues $\alpha_i$, $1\leq i\leq\mbox{d}_{\rho}$:
$\alpha_i=(\sigma_i +1)/n$. In particular, $\alpha_1=\lambda_1/n$.

Denote by $\rho_{\lambda}$ the irreducible representation associated
to a partition $\lambda$ and by $\rho_{\lambda}=1$ the trivial representation
with corresponds to $\lambda=(n)$. Equation (\ref{tr-bound1}) yields
\begin{equation}\label{tr-bound2}
d_2(q^{(t)},u)^2\leq \sum_{\rho_{\lambda}\neq 1}
     \mbox{d}_{\rho_\lambda}^2\left(\frac{\lambda_1}{n}\right)^{2t}
=\sum_{j=1}^{n-1}\sum_{{\rho_{\lambda}}\atop{\lambda_1=n-j}}
     d_{\lambda}^2\left(\frac{\lambda_1}{n}\right)^{2t}.
\end{equation}

\noindent In \cite{Dia, FOW} it is shown that for $l\geq 1$
\begin{equation}
\sum_{{\rho_{\lambda}}\atop{\lambda_1=l}}d_{\rho_{\lambda}}^2\leq {n\choose{l}}^2(n-l)!.\label{dim}
\end{equation}
It follows that for $c\geq 0$ and $t\geq n(\log{n}+c)$
\begin{eqnarray*}
d_2(q^{(t)},u)^2&\leq&
\sum_{j=1}^{n-1}\left(\frac{n!}{(n-j)!}\right)^2\left(\frac{1}{j!}\right)\left(1-\frac{j}{n}\right)^{2t}\\
&\leq& \sum_{j=1}^{n-1} n^{2j}\left(\frac{1}{j!}\right)e^{-2j\log{n}}e^{-2jc}=(e-1)e^{-2c}\leq 2e^{-2c}.
\end{eqnarray*}
For the continuous time process, we have, similarly,
\begin{eqnarray*}
d_2(h_{q,t},u)^2&\le & \sum_{\rho_{\lambda}\neq 1}
     \mbox{d}_{\rho_\lambda}^2 e^{-2t(1-\alpha_1)}
=\sum_{j=1}^{n-1}\sum_{{\rho_{\lambda}}\atop{\lambda_1=n-j}}
     d_{\lambda}^2\exp\{-2t(1-{\lambda_1}/n)\}\\
&=&\sum_{j=1}^{n-1}\sum_{{\rho_{\lambda}}\atop{\lambda_1=n-j}}\mbox{d}_{\lambda}^2e^{-2tj/n}
\leq\sum_{j=1}^{n-1}\left(\frac{n!}{(n-j)!}\right)^2\left(\frac{1}{j!}\right)e^{-2tj/n}
\end{eqnarray*}
\noindent where the last inequality follows from (\ref{dim}).
Again, if $n\geq 1$, $c\geq 0$ and $t\geq n(\log{n}+c)$ then
\begin{equation*}
d_2(h_{q,t},u)^2\leq\sum_{j=1}^{n-1} n^{2j}\left(\frac{1}{j!}\right)e^{-2j\log{n}}e^{-2jc}\leq 2e^{-2c}.
\end{equation*}
\end{proof}

The next proposition shows that transpose top with random
 has  $\ell^2$ and  total variation cutoffs at time $n\log{n}$.

\begin{pro}\label{thm-tr-lower}
Let $q$ be the transpose top with random measure on $S_n$.  For any
sequence $(k_n)_0^{\infty}$ such that $(k_n-n\log{n})/n$ tends to $-\infty$ as $n$ tends to $\infty$ then
\begin{equation*}
d_2(q^{(k_n)},u)\ra\infty \;\; \text{and} \;\; d_{\mbox{\tiny{\em TV}}}(q^{(k_n)},u)\ra 1.
\end{equation*}
\end{pro}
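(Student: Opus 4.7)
The plan is to prove the two limits separately. Write $\omega_n = \log n - k_n/n$, so $\omega_n \to \infty$ by hypothesis. By the monotonicity of $d_2(q^{(t)},u)$ and $d_{TV}(q^{(t)},u)$ in $t$ (both non-increasing for a reversible chain), I may assume without loss of generality that $\omega_n \leq (\log n)/2$.

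For the $\ell^2$ lower bound, I apply Proposition \ref{dist-trace} and retain only the single term corresponding to $\rho = \rho_{(n-1,1)}$, the standard representation. From the eigenvalue computation in the proof of Theorem \ref{thm-tr-disc}, $\widehat{q}(\rho)$ has eigenvalue $(n-1)/n$ with multiplicity $n-2$ (and eigenvalue $0$ with multiplicity $1$), giving
$$d_2(q^{(k_n)},u)^2 \;\geq\; d_\rho\,\mathrm{Tr}\bigl(\widehat{q}(\rho)^{2k_n}\bigr) \;=\; (n-1)(n-2)\Bigl(1-\tfrac{1}{n}\Bigr)^{2k_n} \;=\; e^{2\omega_n}(1+o(1))\;\to\;\infty,$$
where the last equality follows from $(1-1/n)^{2n} = e^{-2}(1+O(1/n))$.

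For the total variation bound I use the test function $F(\pi) = \#\{i : \pi(i) = i\}$. Since the natural permutation representation decomposes as $\mathbf{1}\oplus S$ with $S = \rho_{(n-1,1)}$, one has $F = 1 + \chi_S$, so $\mu_n := E_{q^{(k_n)}}[F] = 1 + (n-2)(1-1/n)^{k_n} \sim e^{\omega_n} \to \infty$, while $E_u[F] = 1$. To apply a Chebyshev argument I need $\mathrm{Var}_{q^{(k_n)}}[F] = o(\mu_n^2)$. Using the Clebsch--Gordan decomposition $S\otimes S = \mathbf{1}\oplus S\oplus\rho_{(n-2,2)}\oplus\rho_{(n-2,1,1)}$ (valid for $n\geq 4$), one obtains $F^2 = 2 + 3\chi_S + \chi_{(n-2,2)} + \chi_{(n-2,1,1)}$, hence
$$\mathrm{Var}_{q^{(k_n)}}[F] \;=\; 1 + a_t + b_t + c_t - a_t^2,$$
where $a_t, b_t, c_t$ are the expectations of $\chi_S, \chi_{(n-2,2)}, \chi_{(n-2,1,1)}$ under $q^{(k_n)}$. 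By the eigenvalue formula $\sigma_i = \lambda_i - i$ with branching-rule multiplicities, the dominant contributions to $b_t$ and $c_t$ come from the common largest eigenvalue $(n-2)/n$, with multiplicities $d_{(n-3,2)} = (n-1)(n-4)/2$ and $d_{(n-3,1,1)} = (n-2)(n-3)/2$, summing to $n^2 - 5n + 5$. This matches the coefficient $(n-2)^2 = n^2 - 4n + 4$ arising in $a_t^2$ up to order $n$, while $((n-1)/n)^{2k_n}/(1-2/n)^{k_n} = (1 + 1/(n^2-2n))^{k_n} = 1 + O((\log n)/n)$ in our range of $k_n$. Together these give $b_t + c_t - a_t^2 = O((\log n)/n)\cdot a_t^2$, so $\mathrm{Var}_{q^{(k_n)}}[F]/\mu_n^2 = O\bigl((\log n)/n + e^{-\omega_n}\bigr) \to 0$.

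Finally, set $A_n = \{\pi : F(\pi) \geq \mu_n/2\}$. Chebyshev's inequality gives $q^{(k_n)}(A_n) \to 1$, while Markov's inequality applied to $F$ under $u$ gives $u(A_n) \leq 2/\mu_n \to 0$. Hence $d_{TV}(q^{(k_n)},u) \geq q^{(k_n)}(A_n) - u(A_n) \to 1$. The principal difficulty is the variance cancellation: the leading contributions $b_t + c_t$ and $a_t^2$ individually blow up at the critical time, and one must show their difference is of strictly smaller order than $a_t^2$. This hinges on the arithmetic matching of the dimensions (difference of order $n$ out of $n^2$) and on the near-equality of the eigenvalue powers $(1-2/n)^{k_n}$ and $((n-1)/n)^{2k_n}$ throughout the regime $k_n \leq n\log n$.
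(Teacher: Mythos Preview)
Your proof is correct. The $\ell^2$ part is essentially identical to the paper's: both isolate the contribution of $\rho_{(n-1,1)}$, using that $\widehat q(\rho_{(n-1,1)})$ has eigenvalue $1-1/n$ with multiplicity $n-2$, so that $d_2(q^{(k)},u)^2\ge (n-1)(n-2)(1-1/n)^{2k}$.

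For the total variation lower bound, however, your route is genuinely different from the paper's. The paper argues \emph{probabilistically}: it couples the walk with a balls-in-boxes process, observing that any position $j\in\{2,\dots,n\}$ never selected remains a fixed point, so $q^{(k_n)}(A_{j-1})\ge P(B_{j,k_n})$ where $B_{j,k}$ is the event of at least $j$ empty boxes after $k$ throws. It then runs a coupon-collector analysis (mean and variance of the waiting time $V_{n-j}$) together with Chebyshev, and controls $u(A_j)$ via the classical matching formula. You instead argue \emph{algebraically}, computing $E_{q^{(k_n)}}[F]$ and $\mathrm{Var}_{q^{(k_n)}}[F]$ for the fixed-point count $F$ directly from the decomposition $S\otimes S=\mathbf 1\oplus S\oplus V^{(n-2,2)}\oplus V^{(n-2,1,1)}$ and the explicit FOW eigenvalues with branching-rule multiplicities. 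Your key step is the cancellation $b_t+c_t-a_t^2=O((\log n)/n)\,a_t^2$, which you correctly justify via $(n^2-5n+5)-(n-2)^2=-(n-1)$ and $((n-1)/n)^{2k_n}/(1-2/n)^{k_n}=1+O((\log n)/n)$ on the range $k_n\le n\log n$ secured by your WLOG reduction. Each approach has its merits: the paper's coupling is more elementary and portable (it recurs for random transposition and top-to-random), while yours stays entirely within the spectral framework already set up for the upper bound and avoids the auxiliary balls-in-boxes process.
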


\begin{proof} For the $\ell^2$ bound, we observe that \cite{FOW}
also gives a description of the multiplicity of the eigenvalues.
In particular, if $\lambda=(n-1,1)$ then the eigenvalue $1-1/n$ of $\widehat{q}(\rho_\lambda)$ 
has multiplicity $n-2$. Since $d_\lambda=n-1$ we get that 
$$d_2(q^{(k)},u)^2\ge  (n-1)(n-2)(1-1/n)^{2k}$$
from which the desired $\ell^2$ statement easily follows.

\begin{rem}
Let $\varphi(\sigma)$ be the number of fixed points of $\sigma$.
One can check by direct inspection that
\begin{eqnarray*}
f(\sigma)=\left(\frac{n-1}{n-2}\right)^{1/2}\times\left\{\begin{array}{ll}
\varphi(\sigma)  -2 & \textrm{if $\sigma(1)=1$}\\
\varphi(\sigma) -1+\frac{1}{n-1}  & \textrm{if $\sigma(1)\neq 1$}
\end{array}\right.
\end{eqnarray*}
is a normalized eigenfunction (for convolution by $q$)
with eigenvalue $1-1/n$.
Its value at $e$ is $f(e)^2=(n-1)(n-2)$.
This gives a entirely elementary proof of the $\ell^2$ lower bound
since $d_2(q^{(k)},u)^2\ge (1-1/n)^{2k}f(e)^2.$  The previouse inequality  
results from the fact that one can write the $\chi$-square distance 
in terms of eigenvalues and eigenfuctions.  See, e.g., \cite{StF}.
\end{rem}

The proof of the lower bound for total variation follows mostly
an argument used in
\cite{AD} to give a lower bound for random transposition (and for
the top to random insertion shuffle).
Let 
\begin{equation}\label{eq-Aj-lower}
A_j=\left\{\sigma\in S_n:\varphi(\sigma)\geq j\right\}
\end{equation}
with $\varphi$ as defined above. Then
\begin{equation*}
d_{\mbox{\tiny TV}}(q^{(k_n)},u)\geq q^{(k_n)}(A_j)-u(A_j).
\end{equation*}
\noindent Calculating $u(A_j)$ is equivalent to calculating the
probability of at least $j$ matches in the classical matching problem.
In \cite{Fe}, Feller gives a closed form solution for $u(A_j)$.
Using this we get the following estimate for $j\geq 2$
\begin{equation}\label{uniform}
u(A_j)=\sum_{m=j}^n \frac{1}{m!}\left(\sum_{v=0}^{n-m}\frac{(-1)^v}{v!}\right)\leq e^{-1}\left(\frac{1}{(j-1)!}\right).
\end{equation}
Next we bound $q^{(k_n)}(A_j)$ from below.
Consider the experiment where successive balls are droped 
independently and uniformly at random into $n$ boxes. Let 
$B_{j,k}$ be the event that after dropping $k$ balls
there are at least $j$ empty boxes.  Then
\begin{equation*}
q^{(k_n)}(A_{j-1})\geq P(B_{j,k_n}).
\end{equation*}
Let  $V_l$ be the number of balls dropped when exactly $l$ boxes are filled.  
We have
\begin{equation*}
P(B_{j,k_n})= P(V_{n-j}\geq k_n)\geq 1- P(V_{n-j}\leq k_n).
\end{equation*}
We would like to show that for any fixed  $j$,
$P(V_{n-j}\leq k_n)\ra 0$ as $n\ra \infty$.
We have
\begin{equation*}
V_{n-j}=(V_{n-j}-V_{n-j-1})+(V_{n-j-1}-V_{n-j-2})+\cdots+(V_2-V_1)+V_1.
\end{equation*}
The $V_{i+1}-V_i$ are independent random variables with geometric distribution
\begin{equation*}
P\left\{V_{i+1}-V_i=l\right\}=\left(\frac{n-i}{n}\right)
\left(1-\frac{n-i}{n}\right)^{l-1},\;\; l\ge 1.
\end{equation*}
\noindent Hence
\begin{equation*}
E(V_{i+1}-V_i)=\frac{n}{n-i} \;\;\text{and}\;\; \text{Var}(V_{i+1}-V_i)=\left(\frac{n}{n-i}\right)^2\left(1-\frac{n-i}{n}\right).
\end{equation*}

\noindent It follows that
\begin{eqnarray*}
E(V_{n-j})&=&\sum_{i=1}^{n-j-1}\frac{n}{n-i}\geq\int_{0}^{n-j-1}\frac{n}{n-x}\;dx \geq n\log\left(\frac{n}{j+1}\right)
\end{eqnarray*}
\noindent and
\begin{eqnarray*}
\text{Var}(V_{n-j})&=&\sum_{i=1}^{n-j-1} \frac{n^2}{(n-i)^2}-\frac{n^2}{n(n-i)} \leq \sum_{i=1}^{n-j-1}\frac{n^2}{(n-i)^2}\\
&\leq& \int_1^{n-j}\left(\frac{n}{n-x}\right)^2\; dx \leq \frac{n^2}{j}.
\end{eqnarray*}
By assumption $k_n=n\log{n}-nc_n$ and $c_n\ra\infty$ as $n\ra\infty$.
If we assume, as we may,
that $c_n>\log(j+1)$ then Chebyshev's inequality gives
\begin{eqnarray*}
P(V_{n-j}\leq k_n)&=&P(V_{n-j}\leq n\log{n}-nc_n)\\
&\leq&P(n(c_n-\log(j+1))\leq |(V_{n-j})-E(V_{n-j})|)\\
&\leq&\frac{\text{Var}(V_{n-j})}{n^2(c_n-\log(j+1))^2}
\leq\frac{1}{j(c_n-\log(j+1))^2}.
\end{eqnarray*}

\noindent This yields
\begin{equation*}
\lim_{
n\ra \infty} d_{\mbox{\tiny TV}}(q^{(k_n)},u)\geq 
\lim_{
n\ra \infty} (P(B_{j+1,k_n})-u(A_j))\geq 
1-e^{-1}\left(\frac{1}{(j-1)!}\right).
\end{equation*}
Since $j$ is arbitrary the desired result follows.
\end{proof}

\begin{cor}
Let $h_t$ be the distribution for the continuous time process associated
to the transpose top with random measure $q$.  For any
sequence $(k_n)_0^{\infty}$ such that $(k_n-n\log{n})/n$ tends to $-\infty$ as
$n$ tends to $\infty$ then
\begin{equation*}
d_2(h_{k_n},u)\ra\infty \;\; \text{and} \;\; d_{\mbox{\tiny{\em TV}}}(h_{k_n},u)\ra 1.
\end{equation*}
\end{cor}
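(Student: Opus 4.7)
The plan is to transfer the two lower bounds of Proposition \ref{thm-tr-lower} to continuous time separately: the $\ell^2$ bound comes directly from the spectral expansion (\ref{dist-spect}), while the total variation bound comes from a Poisson-thinned version of the balls-in-boxes coupling used in the discrete case.

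For the $\ell^2$ bound, I would start from the second identity in (\ref{dist-spect}) and exploit the information about the representation $\rho_\lambda$ with $\lambda=(n-1,1)$ already recorded in the remark following Theorem \ref{thm-tr-disc}. There, $\widehat q(\rho_\lambda)$ has eigenvalue $1-1/n$ with multiplicity $n-2$ and $\mbox{d}_{\rho_\lambda}=n-1$; by the spectral reconstruction at the end of Section \ref{sec-RepT}, this produces $(n-1)(n-2)$ orthonormal eigenfunctions of $f\mapsto f*q$ with eigenvalue $1-1/n$. Hence
\begin{equation*}
d_2(h_t,u)^2 \;\ge\; (n-1)(n-2)\, e^{-2t/n}.
\end{equation*}
Writing the hypothesis as $k_n = n\log n - n c_n$ with $c_n\to\infty$ gives $e^{-2k_n/n}=e^{2c_n}/n^2$, so the right-hand side equals $(1-1/n)(n-2)e^{2c_n}\to\infty$, which yields the $\ell^2$ claim.

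For the total variation bound I would construct $h_t$ pathwise as the law of the continuous-time chain $(\sigma_s)_{s\ge 0}$ in which the transposition $(1,J_k)$ is performed at the arrival times $\tau_1<\tau_2<\cdots$ of a rate-$1$ Poisson process, with the $J_k$ i.i.d.\ uniform on $\{1,\dots,n\}$. By Poisson thinning, for each $i\in\{2,\dots,n\}$ the count $N_i(t)$ of arrivals by time $t$ with $J_k=i$ is Poisson of mean $t/n$, and the variables $(N_i(t))_{i=2}^{n}$ are independent. Any position $i\ge 2$ with $N_i(t)=0$ is never touched and therefore satisfies $\sigma_t(i)=i$, so with
\begin{equation*}
Z_t \;:=\; \#\{i\ge 2:N_i(t)=0\}\;\sim\;\mathrm{Binomial}\bigl(n-1,\,e^{-t/n}\bigr)
\end{equation*}
one has $\varphi(\sigma_t)\ge Z_t$. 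Hence $h_{k_n}(A_j)\ge P(Z_{k_n}\ge j)$ for the sets $A_j$ of (\ref{eq-Aj-lower}). Since $E[Z_{k_n}]=(1-1/n)e^{c_n}\to\infty$ and $\mathrm{Var}(Z_{k_n})\le E[Z_{k_n}]$, Chebyshev gives $P(Z_{k_n}\ge j)\to 1$ for every fixed $j$. Combined with the bound $u(A_j)\le e^{-1}/(j-1)!$ from (\ref{uniform}), this yields
\begin{equation*}
\liminf_{n\to\infty} d_{\mbox{\tiny TV}}(h_{k_n},u)\;\ge\; 1 - \frac{e^{-1}}{(j-1)!}
\end{equation*}
for every fixed $j\ge 2$; letting $j\to\infty$ concludes.

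The only real work is in setting up the Poisson-thinning coupling; once this is in place it actually streamlines the geometric waiting-time calculation of Proposition \ref{thm-tr-lower}, so no genuinely new ingredient is needed beyond what is already at hand.
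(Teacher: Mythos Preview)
Your proof is correct. The $\ell^2$ half is exactly the paper's argument (modulo the harmless slip $(1-1/n)(n-2)e^{2c_n}$ in place of $(1-1/n)(1-2/n)e^{2c_n}$). For total variation, however, you take a genuinely different route. The paper does not analyze the continuous-time process directly; instead it writes $h_{k_n}(A_j)=\sum_t e^{-k_n}\frac{k_n^t}{t!}\,q^{(t)}(A_j)$, invokes the discrete-time lower bound from Proposition~\ref{thm-tr-lower} at the slightly larger time $k_n+k_n^\alpha$ (which still satisfies $(k_n+k_n^\alpha-n\log n)/n\to -\infty$ for $\alpha\in(1/2,1)$), and then uses Poisson concentration of the jump count to show that the relevant tail of the Poisson weights has mass tending to $1$. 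Your Poisson-thinning construction bypasses this transfer entirely: by recognizing that the untouched positions $i\ge 2$ form a $\mathrm{Binomial}(n-1,e^{-t/n})$ sample, you get $h_{k_n}(A_j)\to 1$ from a one-line Chebyshev bound, with no detour through the discrete chain and no need to juggle the extra time scale $k_n^\alpha$. The paper's approach has the virtue of recycling Proposition~\ref{thm-tr-lower} verbatim, but yours is cleaner and more self-contained.
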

\begin{proof} The $\ell^2$ bound follows from the same argument used above.
In the case of the total variation bound, one can show that 
for $A_j$ defined in (\ref{eq-Aj-lower}) then
$h_{k_n}(A_j)\ra 1$.  A sight modification of the proof 
of Proposition \ref{thm-tr-lower} gives that for $\alpha\in(1/2,1)$ 
\begin{equation*}
\lim_{n\ra\infty}q^{k_n+k_n^{\alpha}}(A_j)= 1.
\end{equation*}
Combining the limit above wih the fact that 
\begin{equation*}
\lim_{n\ra\infty}\sum_{t=0}^{k_n+k_n^{\alpha}}e^{-k_n}\frac{k_n^t}{t!}
=\lim_{n\ra\infty}P\left(\frac{X_n-k_n}{\sqrt{k_n}}\leq k_n^{\alpha-1/2}\right)
=1\end{equation*}
where $X_n$ is a Poisson random variable with parameter $k_n$ gives us the 
desired result. 
\end{proof}

\section{Random transpositions}\label{sec-RandomTrans}
\subsection{Discrete time}
Consider the following measure $q=q_{\mbox{\tiny RT}}$ on the group $G=S_n$,
\begin{eqnarray}\label{rt-measure}
q(\tau)=\left\{\begin{array}{ll}
2/n^2 & \textrm{if $\tau=(i,j)$, $1\leq i,j\leq n, \;\; i\neq j,$}\\
1/n & \textrm{if $\tau=id,$}\\
0 & \textrm{otherwise.}
\end{array}\right.
\end{eqnarray}

\noindent The measure $q$ models the shuffle of a deck of $n$ cards
where one picks two cards independently and uniformly at random and
transposes them.  The random transposition shuffle has
been shown to demonstrate cutoff at $(n/2)\log{n}$, see \cite{Dia, Dia2,DS}.

\begin{theo} {\bf (Diaconis and Shahshahani)}\label{thm-DS}
Let $q$ be the random transposition measure on the group $S_n$
then there exists a positive universal constant $B$ such that for
any $c\geq 0$ and $t\geq \frac{n}{2}(\log{n}+c)$ then
\begin{displaymath}\label{rt-bound}
2d_{\mbox{\tiny{\em{TV}}}}(q^{(t)},u)\leq d_2(q^{(t)},u)\leq Be^{-c}.
\end{displaymath}
\end{theo}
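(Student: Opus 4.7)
Since $q = q_{\mbox{\tiny RT}}$ is a class function (uniform on the transposition class $\mathcal{T}$ with total mass $(n-1)/n$, plus a point mass $1/n$ at $e$), Proposition~\ref{dist-character} reduces the $\ell^2$ distance to a sum over Young diagrams:
$$d_2(q^{(t)}, u)^2 = \sum_{\lambda \vdash n,\, \lambda \neq (n)} d_\lambda^2\, r(\lambda)^{2t}, \qquad r(\lambda) = \frac{1}{n} + \frac{n-1}{n}\cdot\frac{\chi_\lambda(\tau)}{d_\lambda}.$$
Using the classical content formula for transpositions,
$$\frac{\chi_\lambda(\tau)}{d_\lambda} = \binom{n}{2}^{-1}\left[\sum_i \binom{\lambda_i}{2} - \sum_j \binom{\lambda'_j}{2}\right],$$
the eigenvalue rewrites compactly as $r(\lambda)=[n+\sum_i\lambda_i^2-\sum_j(\lambda'_j)^2]/n^2$. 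Sample evaluations (the hook $(n-j,1^j)$ gives $r(\lambda)=1-2j/n$; the two-row shape $(n-j,j)$ gives $1-2j/n+2j(j-1)/n^2$) show that $r(\lambda)\approx 1-2j/n$ with $j:=n-\lambda_1$ is the right scaling.

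The plan then combines three ingredients. First, the sign-twist symmetry $\rho_{\lambda'}\simeq\rho_\lambda\otimes\mathrm{sgn}$ gives $d_{\lambda'}=d_\lambda$, $\chi_{\lambda'}(\tau)=-\chi_\lambda(\tau)$, and hence $r(\lambda)+r(\lambda')=2/n$. After raising to the even power $2t$, the contribution of any $\lambda$ is comparable to that of $\lambda'$, so I may restrict attention to the "wide" half of the diagram lattice (using $\lambda_1\lambda'_1\geq n$, so $\max(\lambda_1,\lambda'_1)\geq\sqrt{n}$) at a bounded cost. Second, the dimension bound (\ref{dim}) yields $\sum_{\lambda_1=n-j}d_\lambda^2\leq\binom{n}{j}^2 j!\leq n^{2j}/j!$. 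Third, granting a character estimate $|r(\lambda)|\leq 1-2j/n+O(j^2/n^2)$ at level $j$, these combine to bound
$$d_2(q^{(t)},u)^2\leq C\sum_{j\geq 1}\frac{n^{2j}}{j!}\left(1-\frac{2j}{n}\right)^{2t},$$
and at $t=(n/2)(\log n+c)$ each $n^{2j}$ is cancelled by $(1-2j/n)^{2t}\approx n^{-2j}e^{-2jc}$, leaving the convergent series $\sum_{j\geq 1}e^{-2jc}/j!\leq (e-1)e^{-2c}$.

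The main obstacle is the sharp character bound $|r(\lambda)|\leq 1-2j/n+O(j^2/n^2)$ uniformly over all shapes with $\lambda_1=n-j$. A naive estimate $|r(\lambda)|\leq 1-j/n$ is insufficient: at the cutoff time $t=(n/2)(\log n+c)$ it would produce a level-$j$ contribution of order $n^je^{-jc}/j!$, which is not summable in $j$. Diaconis and Shahshahani verify the sharper inequality by a careful combinatorial inspection of $\sum_i\lambda_i^2-\sum_j(\lambda'_j)^2$ at fixed $\lambda_1$ (with the hook as the baseline and non-hook shapes handled either by showing that $r(\lambda)$ drops further below $1-2j/n$ or that $d_\lambda^2$ is correspondingly smaller). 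Once this uniform eigenvalue estimate is in hand, assembling the pieces gives $d_2(q^{(t)},u)^2\leq B^2 e^{-2c}$ for a universal $B$, and the inequality $2d_{\mbox{\tiny TV}}\leq d_2$ is automatic from the definitions, completing the theorem.
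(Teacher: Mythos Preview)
Your outline follows the Diaconis--Shahshahani strategy and matches the paper's approach in its broad strokes: express $d_2^2$ as a sum over partitions via Proposition~\ref{dist-character}, group by $j=n-\lambda_1$, bound dimensions by (\ref{dim}), and bound the normalized character at level $j$. There is, however, a genuine gap at the step where you pass from the character estimate $|r(\lambda)|\le 1-2j/n+O(j^2/n^2)$ to the displayed series $C\sum_{j\ge 1}\frac{n^{2j}}{j!}(1-2j/n)^{2t}$. The quadratic correction cannot be absorbed into the constant $C$: for $j$ of order $n$ (say $j=n/4$) the ratio $\bigl(1-2j/n+Cj^2/n^2\bigr)^{2t}\big/\bigl(1-2j/n\bigr)^{2t}$ is of size $\exp(c'\,n\log n)$ at $t=(n/2)\log n$, so your simplified series is not an upper bound for the true sum. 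Relatedly, the sign-twist claim that the contributions of $\lambda$ and $\lambda'$ are ``comparable at bounded cost'' is not correct as stated, since $r(\lambda')=2/n-r(\lambda)$ and $|r(\lambda')|^{2t}/|r(\lambda)|^{2t}$ depends on $n$ and $t$; the paper instead uses the second line of (\ref{r}) to handle the regime $\lambda_1<n/2$ directly.

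The paper (following \cite{DS}) never drops the correction. From (\ref{r}) it obtains the exact level-$j$ quantities
\[
A_j=\Bigl(\tfrac{n!}{(n-j)!}\Bigr)^2\tfrac{1}{j!}\Bigl(1-\tfrac{2j}{n}\bigl(1-\tfrac{j-1}{n}\bigr)\Bigr)^{n\log n}\quad(j\le n/2),\qquad
B_j=\Bigl(\tfrac{n!}{(n-j)!}\Bigr)^2\tfrac{1}{j!}\Bigl(1-\tfrac{j}{n}\Bigr)^{n\log n}\quad(j>n/2),
\]
and then carries out a careful ratio analysis (Propositions~\ref{lem-tech1-rt} and~\ref{lem-tech2-rt}). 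For $1\le j\le n/4$ one shows $A_{j+1}/A_j\le 1/2$, giving a geometric tail bounded by $2$; for $n/4\le j\le n/2$ one shows $A_j\le\max\{A_{n/4},A_{n/2}\}$ and that both endpoints are $\exp(-c\,n\log n)$ small, with the factorial in the dimension bound essential to beat the loss from the $j^2/n^2$ term; the $B_j$ range is handled similarly. So the work you flag as ``the main obstacle'' (the eigenvalue inequality) is in fact the easier half; the substantive effort lies in the summation you describe as routine.
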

\noindent One of the aims of this section is to get a more
precise estimate on the constant $B$ in the theorem above.

\begin{pro}\label{rt}
Let $q$ be the random transposition measure on $S_n$.
For $n\geq 14$, $c\geq 0$, and  $t\geq \frac{n}{2}(\log{n}+c)$
then equation (\ref{rt-bound}) holds with
\begin{equation*}
B^2\leq 2+\varphi(n)\leq 4
\end{equation*}
\noindent where $\varphi(n)\ra 0$ as $n\ra \infty$.
\end{pro}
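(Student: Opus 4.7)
My plan is to sharpen the classical Diaconis--Shahshahani estimate by tracking every constant in the representation-theoretic expansion. Since $q_{\mathrm{RT}}$ is a class function, Proposition \ref{dist-character} yields
$$d_2(q^{(t)},u)^2 \;=\; \sum_{\lambda \neq (n)} d_\lambda^2\, \beta_\lambda^{2t}, \qquad \beta_\lambda \;=\; \frac{1}{n} + \frac{n-1}{n}\, r(\lambda),$$
with $r(\lambda) = \chi_\lambda(\tau)/d_\lambda = \frac{1}{n(n-1)}\sum_i \lambda_i(\lambda_i - 2i + 1)$ by Frobenius. Grouping by $j = n - \lambda_1$ and writing $\lambda = (n-j,\mu)$ with $\mu \vdash j$ (valid when $j \le n/2$), a direct calculation gives
$$\beta_\lambda \;=\; 1 - \frac{2j}{n} + \frac{j(j-1) + 2\,C(\mu)}{n^2}, \qquad C(\mu) \;=\; \sum_{\square\in\mu}(\mathrm{col}-\mathrm{row}),$$
and since $|C(\mu)| \le j(j-1)/2$ the eigenvalue is dominated by $\beta_{(n-j,j)} = 1 - 2j/n + 2j(j-1)/n^2$.

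Next I would combine this with the dimension estimate $\sum_{\lambda_1 = n-j} d_\lambda^2 \le \binom{n}{j}^2 j!$ from \eqref{dim}, and apply $\binom{n}{j}^2 j! \le n^{2j}/j!$ together with $\log(1-x) \le -x$. For $t \ge (n/2)(\log n + c)$ this yields
$$\sum_{\lambda_1 = n-j} d_\lambda^2 \beta_\lambda^{2t} \;\le\; \frac{e^{-2jc}}{j!}\,\exp\!\left(\frac{2j(j-1)(\log n + c)}{n}\right).$$
The constant $2$ in the conclusion then arises from the elementary inequality
$$\sum_{j\ge 1}\frac{e^{-2jc}}{j!} \;=\; e^{e^{-2c}} - 1 \;\le\; 2\,e^{-2c} \qquad (c\ge 0),$$
which holds because $e^x - 1 \le 2x$ on $(0,1]$. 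The $j=1$ stratum contributes exactly $e^{-2c}$ (its correction factor equals $1$), while for $j \ge 2$ the correction factor is controlled by the factorial: it is $1+o(1)$ up to $j \sim \sqrt{n/\log n}$, and beyond that the super-factorial decay $1/j!$ overwhelms the polynomial growth.

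Partitions with $\lambda_1 < n/2$, where the majorant $(n-j,j)$ ceases to be a valid partition, are handled by conjugation: $d_{\lambda'} = d_\lambda$ and $\beta_{\lambda'} = 2/n - \beta_\lambda$, so \textquotedblleft tall\textquotedblright\ partitions map to \textquotedblleft wide\textquotedblright\ ones analyzed above and contribute $O(e^{-2c}/n^2)$. Genuinely middle partitions (neither wide nor tall) have $|\beta_\lambda|$ bounded away from $1$ by a constant, so $|\beta_\lambda|^{2t}$ is super-polynomially small in $n$ and trivially fits in $\varphi(n)$.

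The main obstacle is not any single ingredient but the global bookkeeping needed to consolidate every source of slack --- the correction factor $\exp(2j(j-1)(\log n + c)/n)$ for moderate $j$, the gap between $e^{e^{-2c}}-1$ and $2e^{-2c}$, the dimension over-count in $\binom{n}{j}^2 j! \le n^{2j}/j!$, and the conjugate/middle contributions --- into a single explicit non-negative function $\varphi(n)$ satisfying $\varphi(n) \to 0$ and $2+\varphi(n) \le 4$ for all $n \ge 14$. The hypothesis $n \ge 14$ reflects precisely where the mid-range correction, which has size $\exp(O(j^2\log n/n))$ per term and can become of order one around $j \sim \sqrt n$, first fits inside the bound $4$ uniformly in $c \ge 0$.
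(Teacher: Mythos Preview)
Your overall architecture matches the paper's: expand via Proposition~\ref{dist-character}, group by $j=n-\lambda_1$, majorize $\beta_\lambda$ by $\beta_{(n-j,j)}$, and use the dimension bound \eqref{dim}. The gap is in the range $j$ of order $n/2$, where your displayed term
\[
T_j \;=\; \frac{e^{-2jc}}{j!}\,\exp\!\left(\frac{2j(j-1)(\log n+c)}{n}\right)
\]
actually diverges. At $c=0$ and $j=n/2$, Stirling gives
\[
T_{n/2}\;\sim\;\frac{n^{n/2-1}}{(n/2)!}\;\sim\;\frac{(2e)^{n/2}}{\sqrt{\pi}\,n^{3/2}}\;\longrightarrow\;\infty,
\]
so the assertion that ``the super-factorial decay $1/j!$ overwhelms the polynomial growth'' is false: the correction factor is not polynomial but of order $\exp(\Theta(j^2\log n/n))$, and at $j=n/2$ it exactly matches the $n^{-j}$ part of $1/j!$, leaving the constant $(2e)^{n/2}$ uncontrolled. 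The loss occurs at the step $\binom{n}{j}^2 j!\le n^{2j}/j!$, which throws away a factor of roughly $e^{-j^2/n}$ that is essential when $j\asymp n$. Your ``middle partitions'' fallback is likewise too crude: knowing only $|\beta_\lambda|\le c_0<1$ and multiplying by a dimension count of order $n!$ yields $n!\,c_0^{\,n\log n}$, which does not go to zero unless $c_0<1/e$, and for example $\beta_{(n/2,n/2)}\approx 1/2>1/e$.

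The paper avoids this by never passing to $n^{2j}/j!$: it keeps the exact factor $\bigl(n!/(n-j)!\bigr)^2/j!$ and analyzes the resulting quantities $A_j$, $B_j$ in \eqref{Aj}--\eqref{Bj} by a ratio test. The ratio $A_{j+1}/A_j=e^{h_n(j)}$ is shown to be convex on $[1,n/4]$ with $h_n(1),h_n(n/4)\le -\log 2$, giving $\sum_{j\le n/4}A_j\le 2$ (this is the source of the constant~$2$, not the inequality $e^{e^{-2c}}-1\le 2e^{-2c}$). On $[n/4,n/2]$ one shows $A_j\le\max\{A_{n/4},A_{n/2}\}$ and then bounds $A_{n/2}$ directly via Stirling by $\exp\{2-\tfrac16 n\log n\}$; the range $j\ge n/2$ is handled similarly using the second bound in \eqref{r}. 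These Stirling computations, not bookkeeping of slack, are the substance of the proof and are exactly what is missing from your outline.
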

Let $t_n$ be the smallest integer larger or equal to $(n/2)\log n$.
Then  the result above and an easy lower bound discussed below imply that
$$1\le \lim_{n\ra\infty}d_2(q^{(t_n)},u)\le 2.$$
It is quite rare to be able to capture the mixing time of a chain with such  precision.

\begin{proof}
Let $\mathcal{C}\subset S_n$ be the conjugacy class of transpositions,
$\tau\in\mathcal C$ be a transposition,
and 
$$r(\rho_{\lambda})=\frac{\chi_{\rho_{\lambda}}(\tau)}
{\mbox{d}_{\rho_{\lambda}}}.$$
Proposition \ref{dist-character} gives
\begin{equation}\label{eqn-dist-rt}
d_2(q^{(t)},u)^2
=\sum_{\rho_{\lambda}\neq 1}\mbox{d}_{\rho_{\lambda}}^2\left(\frac{1}{n}+\frac{n-1}{n}r(\rho_{\lambda})\right)^{2t}
\end{equation}
\noindent In \cite{DS} it is shown that
\begin{eqnarray}\label{r}
r(\rho_{\lambda})\leq\left\{\begin{array}{ll}
1-\frac{2(n-\lambda_1)(\lambda_1+1)}{n(n-1)}& \textrm{if $\lambda_1\geq n/2$}\\
\frac{\lambda_1-1}{n-1}& \textrm{always.}
\end{array}\right.
\end{eqnarray}
It follows from equations (\ref{dim}), (\ref{eqn-dist-rt}), and (\ref{r}) that
\begin{eqnarray*}
d_2(q^{(t)},u)^2&=&
\sum_{j=1}^{n-1}\sum_{\rho_{\lambda}\atop \lambda_1=n-j}d_{\rho_{\lambda}}^2\left(\frac{1}{n}+\frac{n-1}{n}r(\rho_{\lambda})\right)^{2t}\\
&\leq&\sum_{j=1}^{\frac{n}{2}}\left(\frac{n!}{(n-j)!}\right)^2\frac{1}{j!}\left(1-\frac{2j}{n}\left(1-\frac{j-1}{n}\right)\right)^{2t}\\
&&+\sum_{{j=\frac{n}{2}}}^{n-1}\left(\frac{n!}{(n-j)!}\right)^2\frac{1}{j!}\left(1-\frac{j}{n}\right)^{2t}.
\end{eqnarray*}

\noindent Note that for $1\leq j\leq \frac{n}{2}$ we have that
$1-\frac{2j}{n}\left(1-\frac{j-1}{n}\right)\leq 1-\frac{2}{n}\leq e^{-2/n}.$
So for $t\geq (n/2)(\log{n}+c)$
\begin{equation*}
d_2(q^{(t)},u)^2\leq e^{-2c}\left(\sum_{j=1}^{n/2}A_j+\sum_{j=n/2}^{n-1}B_j\right),
\end{equation*}
\noindent where

\begin{eqnarray}
A_j&=&
\left(\frac{n!}{(n-j)!}\right)^2\frac{1}{j!}\left(1-\frac{2j}{n}\left(1-\frac{j-1}{n}\right)\right)^{n\log{n}}\label{Aj}\\
B_j&=&\left(\frac{n!}{(n-j)!}\right)^2\frac{1}{j!}\left(1-\frac{j}{n}\right)^{n\log{n}}.\label{Bj}
\end{eqnarray}

\noindent Consider the following two technical propositions.
\begin{pro}\label{lem-tech1-rt}
Set  
$\varphi_0(n)=\sum_{j=1}^{\lfloor n/4\rfloor}A_j
\;\;\text{and}\;\; 
\varphi_1(n)=\sum_{j=\lceil n/4\rceil}^{\lfloor n/2\rfloor} A_j$.
For $n\geq 14$
\begin{equation*}
\varphi_0(n)\leq 2 
\;\;\text{and}\;\;
\varphi_1(n)\leq \exp\left\{2-\frac{1}{6}n\log{n}\right\}.
\end{equation*}
\end{pro}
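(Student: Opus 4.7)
The plan is to bound each summand $A_j$ explicitly and control the two sums separately.  Rewrite
$$A_j=\binom{n}{j}^2 j!\,\phi(j)^{n\log n},\qquad \phi(j)=1-\tfrac{2j(n-j+1)}{n^2}.$$
The basic tools are $\binom{n}{j}\le n^j/j!$ together with $(1-y)^{n\log n}\le\exp(-n\log n(y+y^2/2))$, which follows from $\log(1-y)\le -y-y^2/2$.  Applied with $y=(2j/n)(1-(j-1)/n)$, these combine into the master bound
\begin{equation*}
A_j\le\frac{1}{j!}\exp\!\left(\frac{2\log n}{n}\Bigl(-j+\frac{2j^2(j-1)}{n}-\frac{j^2(j-1)^2}{n^2}\Bigr)\right).
\end{equation*}

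For $\varphi_0(n)$ I would use the master bound directly on $1\le j\le n/4$, splitting the range at some threshold such as $j_0=\sqrt{n}$.  For $j\le j_0$ the exponent is $-2j\log n/n$ up to a controllable correction $O(j^3\log n/n^2)$, so $A_j\le(1+o(1))/j!$ and the partial sum is close to $e-1$.  For $j_0<j\le n/4$, Stirling in the form $j!\ge(j/e)^j$ swamps the positive quadratic contribution, making the tail $o(1)$.  Hence $\varphi_0(n)\le(e-1)+\varphi(n)$ with $\varphi(n)\to 0$, and $n\ge 14$ is the threshold above which this gives $\varphi_0(n)\le 2$, possibly supplemented by a direct check of the smallest $n$.

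For $\varphi_1(n)$ the first-order piece of the Taylor expansion is too crude, because $\phi(j)$ stays bounded away from $1$ on $[n/4,n/2]$.  I would keep $\phi(j)^{n\log n}$ intact, use $j!\ge(j/e)^j$ and $(n!/(n-j)!)^2\le n^{2j}$, and parametrize by $s=j/n$ to obtain
\begin{equation*}
\log A_j\le n\log n\cdot h(s)+O(n),\qquad h(s):=s+\log\bigl(1-2s(1-s)\bigr).
\end{equation*}
A short check shows $h'(\tfrac14)<0$ and $h'(\tfrac12)>0$, so $h$ is decreasing then increasing on $[\tfrac14,\tfrac12]$ and its maximum is attained at an endpoint; comparing the two values gives $\max h = h(\tfrac12) = \tfrac12-\log 2\approx-0.193<-\tfrac16$.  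Thus each $A_j\le\exp(-\tfrac16 n\log n+O(n))$, and summing the at most $\lceil n/4\rceil$ terms (the factor $\log n$ absorbed into the $O(n)$) yields $\varphi_1(n)\le\exp(2-\tfrac16 n\log n)$ for $n\ge 14$.

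The main obstacle is that both estimates are sharp and admit little slack.  For $\varphi_0$, the target $2$ sits only marginally above the limit $e-1\approx 1.72$, so the naive bound $(1-y)^{n\log n}\le e^{-yn\log n}$ is insufficient and the quadratic refinement $-\tfrac12 y^2$ is essential.  For $\varphi_1$, the true exponential rate $\log 2-\tfrac12\approx 0.193$ exceeds the target rate $\tfrac16\approx 0.167$ by only about $0.026$, so the linear-in-$n$ slack from Stirling must be accommodated carefully, and the restriction $n\ge 14$ is precisely what is needed to bury those $O(n)$ contributions inside the offset $2$ in the exponent.
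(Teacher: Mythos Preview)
Your approach differs substantially from the paper's: the paper controls the sums through the ratio $A_{j+1}/A_j=\exp(h_n(j))$, shows $h_n$ is convex on $[1,n/4]$ with $h_n(1),h_n(n/4)\le-\log 2$ (giving $A_j\le 2^{-(j-1)}$ and hence $\varphi_0\le 2$), then shows $h_n$ is monotone on $[n/4,n/2]$ so that $A_j\le\max\{A_{n/4},A_{n/2}\}$, and finally estimates those two specific terms via tight two-sided Stirling bounds. Your plan, by contrast, bounds each $A_j$ directly through the crude inequalities $\bigl(\tfrac{n!}{(n-j)!}\bigr)^2\le n^{2j}$ and $j!\ge (j/e)^j$.

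For $\varphi_0$ your route is plausible and would yield $\varphi_0\le (e-1)+o(1)$, though pinning down the threshold $n\ge14$ will require explicit small-case checks rather than falling out of the argument. For $\varphi_1$, however, there is a genuine quantitative gap. Your parametrization gives
\[
\log A_j\le n\log n\cdot h(s)+n\,(s-s\log s)+O(\log n),
\]
and at $s=\tfrac12$ the linear term is $n(\tfrac12+\tfrac12\log 2)\approx 0.847\,n$. Against this, the only available slack in the $n\log n$ term is $(\log 2-\tfrac12-\tfrac16)n\log n\approx 0.026\,n\log n$. To absorb the linear piece into the constant $2$ you would need $0.026\,n\log n\gtrsim 0.85\,n$, i.e.\ $\log n\gtrsim 33$, so your argument only kicks in for $n$ of order $10^{14}$, not $n\ge14$. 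The losses come precisely from the two crude bounds you use: $\bigl(\tfrac{n!}{(n-j)!}\bigr)^2\le n^{2j}$ throws away a factor $\exp\bigl((1-s)n\log(1-s)+sn\bigr)$, and $j!\ge(j/e)^j$ drops the $\sqrt{2\pi j}$ prefactor. The paper avoids this by evaluating $A_{n/4}$ and $A_{n/2}$ with the full Stirling formula and then observing that the resulting combination of correction terms, written as $f(n)$, is monotone in $n$ and can be replaced by its limiting value $\tfrac12-\log 2$ outright---so no linear-in-$n$ remainder survives at all.
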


\begin{pro}\label{lem-tech2-rt}
Set $\varphi_2(n)=\sum_{j=\lceil n/2\rceil}^n B_j$. For $n\geq 9$
\begin{equation*}
\varphi_2(n)\leq \exp\left\{1-\frac{3}{1000}n\log{n}\right\}.
\end{equation*}
\end{pro}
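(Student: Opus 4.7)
The plan is to exploit the rapid geometric decay of $B_j$ on the range $j\geq n/2$: first show that the ratio $B_{j+1}/B_j$ is uniformly small, reducing $\varphi_2(n)$ to (twice) its first term, and then estimate $B_{\lceil n/2\rceil}$ directly via Stirling's formula.

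First I would rewrite $B_j=\binom{n}{j}^2 j!\,(1-j/n)^{n\log n}$ and compute
\begin{equation*}
\frac{B_{j+1}}{B_j}=\frac{(n-j)^2}{j+1}\Bigl(1-\frac{1}{n-j}\Bigr)^{n\log n}.
\end{equation*}
Setting $m=n-j$, so $1\leq m\leq n/2$ for $j\in[\lceil n/2\rceil,n-1]$, the bound $(1-1/m)^m\leq 1/e$ (valid for $m\geq 2$) gives $(1-1/m)^{n\log n}\leq e^{-n\log n/m}\leq n^{-2}$, while $m^2/(n-m+1)\leq n/2$; for $m=1$ the ratio simply vanishes. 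Hence $B_{j+1}/B_j\leq 1/(2n)\leq 1/2$ for all $n\geq 1$, and summing a geometric series yields $\varphi_2(n)\leq 2\,B_{\lceil n/2\rceil}$.

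Next I would bound $B_{\lceil n/2\rceil}$ by Stirling. Using $\binom{n}{\lceil n/2\rceil}\leq 2^n$, a standard Stirling estimate for $\lceil n/2\rceil!$, and the bound $(1-\lceil n/2\rceil/n)^{n\log n}\leq 2^{-n\log n}$, a routine computation yields
\begin{equation*}
\log B_{\lceil n/2\rceil}\leq -\bigl(\log 2-\tfrac12\bigr)n\log n+\tfrac{3\log 2-1}{2}\,n+O(\log n).
\end{equation*}
Since $\log 2-1/2\approx 0.193$ vastly exceeds $3/1000$, this combines with the ratio step to give $\varphi_2(n)\leq\exp\{1-\tfrac{3}{1000}n\log n\}$ once $n$ is large enough; the leading-order requirement $(0.193-0.003)n\log n\geq 0.54\,n+O(\log n)$ forces $\log n\gtrsim 2.84$, i.e., roughly $n\geq 18$.

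The main obstacle is handling the finitely many small values $n=9,\ldots,17$ where the subleading $O(n)$ term is still competitive with the leading $-(\log 2-\tfrac12)n\log n$ contribution. For each such $n$, the sum $\varphi_2(n)$ is an explicit expression of $O(n)$ rational numbers and the inequality is verified by direct numerical computation. (Alternatively, a sharper use of Stirling, or trading the constant $\exp(1)$ in the statement for a larger one, would eliminate the case check entirely without altering the overall strategy.)
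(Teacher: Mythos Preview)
Your proof is correct and follows the same two-step strategy as the paper: bound the ratio $B_{j+1}/B_j$ to reduce $\varphi_2(n)$ to a constant times $B_{\lceil n/2\rceil}$, then estimate that first term via Stirling. The only substantive difference is in how the small-$n$ range is handled: where you keep only the leading and first subleading Stirling terms and then propose checking $n=9,\ldots,17$ numerically, the paper retains the full Stirling expression and packages it as $\exp\{n(\log n)\,b(n)\}$ for an explicit function $b(n)$, shows $b$ is decreasing, and verifies $b(9)<-3/1000$, which disposes of all $n\geq 9$ at once without case checks. Your route is perfectly valid, but the monotonicity trick is what lets the paper avoid the numerical verification.
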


\noindent Propositions \ref{lem-tech1-rt} and \ref{lem-tech2-rt} give that for $n\geq 14$
\begin{equation*}
d_2(q^{t},u)^2
=e^{-2c}(\varphi_0+\varphi_1+\varphi_2)\leq e^{-2c}(2+\varphi_1(14)+\varphi_2(14))\leq 4e^{-2c}.
\end{equation*}
\noindent  It also follows that $\varphi_1\ra 0$ and $\varphi_2\ra 0$ as $n\ra \infty$.
\end{proof}

\noindent Next we will show the proofs of the propositions above.

\begin{proof}[Proof of Proposition \ref{lem-tech1-rt}.]
Let $A_j$ be as in equation (\ref{Aj}), the ratio between two
consecutive terms is
\begin{equation*}
\frac{A_{j+1}}{A_j}=\exp\left\{f_n(j)+g_n(j)\right\}
\end{equation*}
\noindent where
\begin{eqnarray*}
f_n(j)&=&2\log(n-j)-\log(j+1) \\
g_n(j)&=&n\log{n}\log\left(\frac{n^2-2(j+1)n+2j(j+1)}{n^2-2jn+2j(j-1)}\right).
\end{eqnarray*}

\noindent Taking derivatives gives
\begin{eqnarray*}
f_n'(j)&=&-\frac{2}{n-j}-\frac{1}{j+1}\\
g_n'(j)&=&\frac{4(n\log{n})(2jn-2j^2-n)}{(n^2-2jn+2j^2-2j)(n^2-2jn-2n+2j^2+2j)}.
\end{eqnarray*}

\noindent Note that for $1\leq j\leq n/4$ and $n\geq 4$ we
have that $f_n''(j)=\frac{1}{(j+1)^2}-\frac{2}{(n-j)^2}\geq 0$.  Furthermore,
$g_n''(j)\geq 0$ for $1\leq j\leq n/2$.
The last inequality holds since
for $1\leq j\leq n/2$ the numerator of $g_n'$ is a
positive increasing function of $j$ and the denominator
is a positive decreasing function of $j$.

Set $h_n=f_n+g_n$.  For $1\leq j\leq n/4$ the function $h_n$ is continuous and
has positive second derivative.  It follows that $h_n$ is convex
for said values of $j$, which implies that
\begin{equation*}
h_n(j)\leq\max\left\{h_n(1),h_n\left(n/4\right)\right\}.
\end{equation*}

\noindent Consider the following estimates.
\begin{eqnarray*}
h_n(1)&=&2\log(n-1)-\log{2}+n\left(\log{n}\right)\log\left(1-\frac{2n-4}{n(n-2)}\right)\\
&\leq& 2\log(n-1)-\log{2}-n\left(\log{n}\right)\left(\frac{2n-4}{n(n-2)}\right)\\
&\leq& 2\log(n-1) -\log{2}-2\log{n}
\end{eqnarray*}
\begin{eqnarray*}
h_n(n/4)&=&2\log\left(\frac{3n}{4}\right)-\log\left(\frac{n+4}{4}\right)
+n\left(\log{n}\right)\log\left(1-\frac{8}{5n-4}\right)\\
&\leq&2\log\left(\frac{3n}{4}\right)-\log\left(\frac{n+4}{4}\right)-n\left(\log{n}\right)\left(\frac{8}{5n-4}\right)\\
&\leq&2\log\left(\frac{3n}{4}\right)-\log\left(\frac{n+4}{4}\right)-\frac{8\log{n}}{5}\\
&\leq&2\log{3}-\log{4}+\frac{2}{5}\log{n}-\log(n+4)
\end{eqnarray*}

\noindent For $n\geq 2$\; $h_n(1)$ and $h_n(n/4)$ are decreasing functions of $n$
less than $-\log{2}$.  Since  $A_1=n^2\left(1-2/n\right)^{n\log{n}}\leq 1$,
it follows that for $1\leq j\leq n/4$
\begin{equation*}
A_j\leq (1/2)^{j-1} A_1\leq (1/2)^{j-1}.
\end{equation*}
\noindent We can now state the first part of Proposition \ref{lem-tech1-rt}
\begin{equation*}
\varphi_0(n)
=\sum_{j=0}^{\frac{n}{4}}A_j\leq\sum_{j=1}^{\infty}\left(\frac{1}{2}\right)^{j}=2.
\end{equation*}

Next we bound $A_j$ for $n/4 \leq j\leq n/2$.
It is not hard to show that $f_n'''(j)\leq 0$, so
for the values of $j$ above
\begin{equation*}
f_n'(j)\geq\min\{f_n'(n/4),f_n'(n/2)\}.
\end{equation*}
\noindent Note that
\begin{equation*}
f_n'(n/4)=-\frac{4(5n+8)}{3n(n+4)}
\;\;\text{and}\;\;
f_n'(n/2)=-\frac{2(3n+4)}{n(n+2)}.
\end{equation*}
For $n\geq 14$, $f_n'(n/2)\geq f_n'(n/4)$.
Recall that for $1\leq j\leq n/2$ we had that $g_n''\geq 0$.
It follows that for $n/4\leq j\leq n/2$,
\begin{equation*}
h_n'(j)=f_n'(j)+g_n'(j)\geq f_n'(n/4)+g_n'(n/4)\geq 0.
\end{equation*}

Above we showed that $h_n(n/4)\leq 0$.  For $n\geq 3$ we have that
$h_n(n/2)= 2\log\left(\frac{n}{2}\right)-\log\left(\frac{n}{2}+1\right)\geq 0$,
so there must be a unique point $x\in[n/4,n/2]$ such that
$h_n(x)=0$. If $n/4\leq j\leq x$ then
$(A_{j+1}/A_j)\leq 1$.  If $x\leq j\leq n/2$ then
$(A_{j+1}/A_j)\geq 1$.  So for $n/4\leq j\leq n/2$
\begin{equation*}
A_j\leq\max\left\{A_{\frac{n}{4}},A_{\frac{n}{2}}\right\}.
\end{equation*}

\noindent In \cite{Fe} a proof of Stirling's formula shows that
\begin{eqnarray}
\sqrt{2\pi n}\left(\frac{n}{e}\right)^n\leq n!\leq e^{\frac{1}{12n}}\sqrt{2\pi n}\left(\frac{n}{e}\right)^n.\label{Sterling}
\end{eqnarray}

\noindent To determine the largest value among $A_{\frac{n}{4}}$ and $A_{\frac{n}{2}}$ 
we consider the ratio
\begin{eqnarray*}
\frac{A_{\frac{n}{4}}}{A_{\frac{n}{2}}}
&=&\left(\frac{\left(\frac{n}{2}\right)!}{\left(\frac{3n}{4}\right)!}\right)^{2}
\left(\frac{\left(\frac{n}{2}\right)!}{\left(\frac{n}{4}\right)!}\right)
\left(\frac{5n-4}{4n-8}\right)^{n\log{n}}\\
&\leq& \left(\frac{2\sqrt{2}e^{\frac{1}{4n}}}{3}\right)\left(\frac{e^{\frac{n}{2}}4^n}{n^{\frac{n}{2}}3^{\frac{3n}{2}}}\right)
\left(\frac{n^{\frac{n}{4}}}{e^{\frac{n}{4}}}\right)\nonumber\left(\frac{5n-4}{4n-8}\right)^{n\log{n}}\\
&=&\left(\frac{2\sqrt{2}e^{\frac{1}{4n}}}{3}\right)\left(\frac{4}{3^{\frac{3}{2}}}\right)^n
\left(\frac{e^{\frac{1}{4}}}{n^{\frac{1}{4}}}\right)^n n^{n\log\left(\frac{5n-4}{4n-8}\right)}
=\left(\frac{2\sqrt{2}e^{\frac{1}{4n}}}{3}\right)\exp\left\{l(n)\right\}
\end{eqnarray*}

\noindent where
$l(n)=n\left(\log\left(\frac{4}{3^{3/2}}\right)+\frac{1}{4}\right)+n\log{n}\left(\log\left(\frac{5n-4}{4n-8}\right)-\frac{1}{4}\right)$.
For $n\geq 47$ we have that $\left(\log\left(\frac{5n-4}{4n-8}\right)-\frac{1}{4}\right)\leq 0$
which implies that $l(n)\leq 0$.  If $5\leq n\leq 47$ one can check
that $l(n)\leq 1$.  So for $n\geq 5$ we have that
$(A_{\frac{n}{4}}/A_{\frac{n}{2}})\leq e$ which in turn implies that
$\sum_{j=n/4}^{n/2}A_j\leq (e/4)nA_{\frac{n}{2}}$.
By using Stirling's formula to estimate $A_{\frac{n}{2}}$  we get
\begin{eqnarray*}
\left(\frac{en}{4}\right)A_{\frac{n}{2}}&=&\left(\frac{en}{4}\right)\left(\frac{n!}{\left(\frac{n}{2}\right)!}\right)^2
\left(\frac{1}{\left(\frac{n}{2}\right)!}\right)\left(\frac{n-2}{2n}\right)^{n\log{n}}\\
&=&\left(\frac{en}{4}\right)\left(\frac{e^{\frac{1}{12n}}\sqrt{2}n^{\frac{n}{2}}2^{\frac{n}{2}}}{e^{\frac{n}{2}}}\right)^2
\left(\frac{2^{\frac{n}{2}}e^{\frac{n}{2}}}{n^{\frac{n}{2}}\sqrt{\pi n}}\right)\left(\frac{n-2}{2n}\right)^{n\log{n}}\\
&=&\left(\frac{e^{1+\frac{1}{6n}}\sqrt{n}}{2\sqrt{\pi}}\right)\left(\frac{n^{\frac{n}{2}}2^{\frac{3n}{2}}}{e^{\frac{n}{2}}}\right)
\left(\frac{n-2}{2n}\right)^{n\log{n}}\\
&=&\left(\frac{e^{1+\frac{6}{n}}}{2\sqrt{\pi}}\right)\exp\left\{nf(n)\log{n}\right\}
\end{eqnarray*}

\noindent where
$f(n)=\left(\frac{3\log{2}-1}{2}\right)\left(\log{n}\right)^{-1}+\frac{1}{2n}+\frac{1}{2}+\log\left(\frac{n-2}{2n}\right)$.
Computing the derivative gives us that

\begin{equation*}
f'(n)=\frac{-(3n\log{2}-1)n^2+2(3\log{2}-1)n+3n(\log{n})^2+2(\log{n})^2}{2n^2(\log{n})^2(n-2)}.
\end{equation*}

\noindent Note that $f'\geq 0$ for $n>2$, so $f(n)\leq \lim_{n\rightarrow\infty} f(n) = \frac{1}{2}-\log{2}.$
We can now concluded that for $n\geq 5$
\begin{eqnarray*}
\varphi_1(n)&\leq&\left(\frac{en}{4}\right)A_{\frac{n}{2}}
\leq\left(\frac{e^{1+\frac{6}{n}}}{2\sqrt{\pi}}\right)\exp\left\{(n\log{n})\left(\frac{1}{2}-\log{2}\right)\right\}\\
&\leq&\frac{e^3}{2\sqrt{\pi}}\exp\left\{-\frac{1}{6}n\log{n}\right\}
\leq \exp\left\{2-\frac{1}{6}n\log{n}\right\}.
\end{eqnarray*}

\end{proof}

\begin{proof}[Proof of Proposition \ref{lem-tech2-rt}.]

Let $B_j$ be as in equation (\ref{Bj}).
If $n>2$ and $n/2\leq j\leq n$ we can estimate the ratio of
$B_j$ and $B_{j+1}$ by
\begin{equation*}
\frac{B_{j+1}}{B_j}=\frac{(n-j)^2}{(j+1)}\left(1-\frac{1}{n-j}\right)^{n\log{n}}
\leq 2n\left(1-\frac{2}{n}\right)^{n\log{n}}\leq\frac{2}{n}.
\end{equation*}
We get that
$B_j\leq(2/n)^{j-n/2}B_{\frac{n}{2}}$.  It follows that
\begin{equation*}
\varphi_2(n)=
\sum_{j=\frac{n}{2}}^{n} B_j
\leq B_{\frac{n}{2}}\sum_{j=\frac{n}{2}}^{n}(2/n)^{j-\frac{n}{2}}
\leq B_{\frac{n}{2}}\sum_{j=0}^{\infty}(2/n)^{j}
=\frac{B_{\frac{n}{2}}}{1-(2/n)}.
\end{equation*}

\noindent Using Stirling's formula we can bound $B_{\frac{n}{2}}$ to get
\begin{eqnarray*}
\left(\frac{1}{1-2/n}\right)B_{\frac{n}{2}}
&=&\left(\frac{1}{1-2/n}\right)\left(\frac{n!}{\left(\frac{n}{2}\right)!}\right)^2
\left(\frac{1}{\left(\frac{n}{2}\right)!}\right)\left(\frac{1}{2}\right)^{n\log{n}}\\
&\leq&\left(\frac{1}{1-2/n}\right) \left(\frac{2e^{\frac{1}{6n}}}{\sqrt{\pi n}}\right)
\left(\frac{n^{\frac{n}{2}}2^{\frac{3n}{2}}}{e^{\frac{n}{2}}}\right)
\left(\frac{1}{2}\right)^{n\log{n}}\\
\\
&=&\left(\frac{1}{1-2/n}\right)\left(\frac{2e^{\frac{1}{6n}}}{\sqrt{\pi}}\right)
\exp\left\{n(\log{n})b(n)\right\}\\
\end{eqnarray*}

\noindent where
$b(n)=-\frac{1}{2n}+\left(\frac{3\log{2}-1}{2}\right)\left(\log{n}\right)^{-1}
+\frac{1}{2}+\log\left(\frac{1}{2}\right)$.
Taking derivatives gives that
\begin{equation*}
b'(n)=\frac{(\log{n})^2-n(3\log{2}-1)}{2n^2(\log{n})^2}
\;\;\text{so}\;\;
b'(n)\leq 0 \;\;\text{for $n\geq1$.}
\end{equation*}
\noindent For $n\geq 9$ we have that $b(n)\leq b(9)< -\frac{3}{1000}$.
Furthermore, for $n\geq 9$ the function 
\begin{equation*}
g(n)=\left(\frac{1}{1-2/n}\right)\left(\frac{2e^{\frac{1}{6n}}}{\sqrt{\pi}}\right)
\exp\left\{-\frac{3}{1000}n\log{n}\right\}
\end{equation*}
is decreasing. So for $n\geq 9$ 
\begin{equation*}
\varphi_2(n)\leq g(n)\leq \exp\left\{1-\frac{3}{1000}n\log{n}\right\}.
\end{equation*}
\end{proof}

A lower bound for the $\chi$-square distance is obtain by writing
$d_2(q^{(k)},u)^2\ge (n-1)^2(1-2/n)^{2k}$ which uses the
term associated to the Young diagram $(n-1,1)$. Alternatively, let $\varphi(\sigma)$ 
be the function with denotes the number of fixed points of $\sigma$.  One can check by 
inspection that $\varphi -1$ is a normalized eigenfunction associated with the eigenvalue $(1-2/n)$.
This gives the same $\ell^2$ lower bound.

Concerning total variation lower bounds, \cite{Dia} shows that for
any $c>0$ and $t\geq(n/2)(\log{n}-c)$
\begin{equation*}
\lim_{n\ra\infty}d_{\mbox{\tiny{TV}}}(q^{(t)},u)
\geq 1/e-e^{-e^{-2c}}
\end{equation*}
A slight modification of  the argument used in
\cite{Dia} (as presented above in the proof of Proposition
\ref{thm-tr-lower}) yields  the following proposition.

\begin{pro}\label{rt-lowerbound}
Let $q$ be the random transposition measure on the group $S_n$.
For any sequence $k_n$ such that $(2k_n-n\log{n})/n$ tends to
$-\infty$ as $n$ tends to $\infty$, we have
\begin{equation*}
\lim_{n\ra \infty}d_2(q^{(k_n)},u)=\infty\;\;\text{and}\;\;
\lim_{n\ra \infty}d_{\mbox{\em \tiny{TV}}}(q^{(k_n)},u)=1.
\end{equation*}
\end{pro}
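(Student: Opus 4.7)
The plan is to establish the $\ell^2$ and total variation bounds separately, the first via a single-representation lower bound and the second via a fixed-point moment estimate in the spirit of Proposition~\ref{thm-tr-lower}.

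For the $\ell^2$ bound, I would isolate the contribution of the standard irreducible representation $\rho_\lambda$ indexed by $\lambda = (n-1,1)$. Its dimension is $n-1$ and its character at any transposition $\tau$ equals $\chi_{\rho_\lambda}(\tau) = n-3$, so the eigenvalue appearing in Proposition~\ref{dist-character} is
\begin{equation*}
\frac{1}{n} + \frac{n-1}{n}\cdot\frac{n-3}{n-1} = 1 - \frac{2}{n}.
\end{equation*}
Keeping only this term yields $d_2(q^{(k_n)},u)^2 \geq (n-1)^2(1-2/n)^{2k_n}$. Writing $2k_n = n\log n - n c_n$ with $c_n \to \infty$ and expanding $\log(1-2/n) = -2/n + O(n^{-2})$, the right-hand side is asymptotic to $e^{2c_n}$ and tends to infinity.

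For the total variation lower bound I would follow closely the argument of Proposition~\ref{thm-tr-lower}. With $A_j = \{\sigma\in S_n : \varphi(\sigma)\geq j\}$ as in (\ref{eq-Aj-lower}), Feller's identity (\ref{uniform}) gives $u(A_j) \leq e^{-1}/(j-1)!$, so it suffices to show that $q^{(k_n)}(A_j) \to 1$ for each fixed $j$. Let $R_k$ denote the number of cards never selected as either of the two indices chosen by $q$ during the first $k$ steps. Any such card is a fixed point, so $\varphi(\sigma_k)\geq R_k$ and $q^{(k_n)}(A_j) \geq P(R_{k_n}\geq j)$.

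The core of the argument is a Chebyshev estimate on $R_k$. A single step of $q$ leaves a given card untouched with probability $p_1 = ((n-1)/n)^2$ and leaves a fixed pair of cards jointly untouched with probability $p_2 = ((n-2)/n)^2$, so $E[R_k] = np_1^k$ and
\begin{equation*}
\text{Var}(R_k) = np_1^k + n(n-1)p_2^k - n^2 p_1^{2k}.
\end{equation*}
The elementary identity $(n-1)^4 - n^2(n-2)^2 = 2n^2 - 4n + 1 \geq 0$ gives $p_2 \leq p_1^2$, hence $\text{Var}(R_k) \leq E[R_k]$. For the relevant scaling $2k_n = n\log n - nc_n$ with $c_n \to \infty$, one has $E[R_{k_n}] = n(1-1/n)^{2k_n} \sim e^{c_n}\to\infty$, and Chebyshev's inequality gives $P(R_{k_n}\leq j) \leq E[R_{k_n}]/(E[R_{k_n}] - j)^2 \to 0$. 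This forces $q^{(k_n)}(A_j)\to 1$, and sending $j\to\infty$ after $n\to\infty$ yields $\liminf_n d_{\mbox{\tiny TV}}(q^{(k_n)},u) \geq 1$. The only point that requires any real care is the variance bound, which rests on the algebraic comparison $p_2\leq p_1^2$; everything else is a routine Taylor expansion together with Chebyshev.
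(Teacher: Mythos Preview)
Your proof is correct. The $\ell^2$ part is exactly what the paper does: it singles out the $(n-1,1)$ representation and uses $d_2(q^{(k)},u)^2\ge (n-1)^2(1-2/n)^{2k}$.

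For total variation the paper does not spell out a proof but points to ``a slight modification'' of the argument in Proposition~\ref{thm-tr-lower}, i.e.\ the coupon-collector waiting-time computation: one views the $k$ steps of random transposition as dropping $2k$ balls into $n$ boxes, lets $V_{n-j}$ be the waiting time until $n-j$ boxes are occupied, writes $V_{n-j}$ as a sum of independent geometrics, and applies Chebyshev to $V_{n-j}$. You instead apply Chebyshev directly to $R_k$, the number of never-touched cards, after computing $E[R_k]=n p_1^k$ and bounding $\mathrm{Var}(R_k)\le E[R_k]$ via the neat inequality $p_2\le p_1^2$. Your route is shorter and more self-contained, since it avoids the decomposition into geometric increments and the separate estimates of $E(V_{n-j})$ and $\mathrm{Var}(V_{n-j})$; the paper's route has the advantage of recycling verbatim the machinery already set up for transpose-top-with-random. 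Either way the key observation is the same: a card never selected in any step is a fixed point, and under the hypothesis on $k_n$ the expected number of such cards blows up like $e^{c_n}$.
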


\subsection{Random transposition in continuous time}
This section is devoted to the continuous time version of random transposition. 
There is no proof in the literature that the continuous time random transposition 
shuffle has a $\ell^2$ cutoff at time $(n/2) \log n$.
One reason is that the fact that it does not automatically follow
from the discrete time result is often overlooked.
In fact, getting an upper bound in the continuous time case turns out to be somewhat 
more difficult than in the discrete case.   The difficulty comes from handling 
the contribution of the small eigenvalues of $q$.  Compare with what is
proved below for conjugacy classes with less fixed points, e.g., $4$-cycles. One 
very good reason to want to have a good $\ell^2$ upper-bound in continuous time for 
random transposition is that it yields better result when used with the comparison 
technique of \cite{DSC} to study other chains. See Section \ref{sec-RI} below.

\begin{pro}\label{pro-cont-tr}
Let $h_t$ be the law of the continuous time process associated to the
random transposition measure $q$. If $n\geq 10$, $c\geq 2$ then
for $t\geq (n/2)(\log{n}+c)$
\begin{equation*}
2d_{\mbox{\em \tiny TV}}(h_t,u)\le d_2(h_t,u)\leq e^{-(c-2)}.
\end{equation*}
Moreover, if $t_n$ is any sequence of time such that
$(2t_n-n\log n)/n$ tends to $-\infty$ as $n$ tends to infinity, we have
$$\lim_{n\ra \infty}d_{\mbox{\em \tiny TV}}(h_{t_n},u)=1,\;\;\;
 \lim_{n\ra \infty}d_2(h_{t_n},u)=\infty.$$
\end{pro}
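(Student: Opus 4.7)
I would follow the blueprint of Propositions \ref{rt} and \ref{rt-lowerbound}, adapting them to continuous time; as anticipated in the paragraph preceding this proposition, the new obstacle is that the small eigenvalues of the random transposition operator contribute significantly more to $d_2(h_t,u)^2$ than they do to $d_2(q^{(t)},u)^2$. Starting from Proposition \ref{dist-character}, I would write
$$d_2(h_t,u)^2 = \sum_{\rho_\lambda \neq 1} d_{\rho_\lambda}^2 \exp\bigl(-2t(1-\beta_\lambda)\bigr), \qquad \beta_\lambda = \tfrac{1}{n} + \tfrac{n-1}{n}\, r(\rho_\lambda),$$
and apply the two bounds in (\ref{r}) exactly as in the proof of Proposition \ref{rt}. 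This gives $1-\beta_\lambda \geq 2(n-\lambda_1)(\lambda_1+1)/n^2$ when $\lambda_1 \geq n/2$ and $1-\beta_\lambda \geq (n-\lambda_1)/n$ otherwise. Grouping by $j = n - \lambda_1$ and using the dimension bound (\ref{dim}) produces continuous-time analogs $\widetilde A_j$, $\widetilde B_j$ of the sums in (\ref{Aj}) and (\ref{Bj}), obtained by replacing $(1-2j(1-(j-1)/n)/n)^{n\log n}$ and $(1-j/n)^{n\log n}$ by $\exp(-2j(n-j+1)(\log n+c)/n)$ and $n^{-j}e^{-jc}$, respectively. For small $j$ (say $j \leq n/4$), the convexity and monotonicity arguments of Proposition \ref{lem-tech1-rt} transfer essentially unchanged and bound the corresponding partial sum by a constant multiple of $e^{-2c}$, exactly as in the discrete case.

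The main obstacle lies in the intermediate regime $j \approx n/2$. A Stirling computation gives $\widetilde A_{n/2}$ of order $(8/e)^{n/2} e^{-(n/2+1)c}/n^{3/2}$, whereas its discrete counterpart $A_{n/2}$ decays like $\exp(-n(\log n)/6)$ by Proposition \ref{lem-tech1-rt}. Since $\sqrt{8/e} \approx 1.716 > 1$, the continuous-time sum diverges exponentially when $c=0$. However $8/e^3 \approx 0.4 < 1$, so the hypothesis $c \geq 2$ supplies exactly the factor $e^{-n}$ needed to dominate this growth. A careful Stirling-based bookkeeping in the spirit of Propositions \ref{lem-tech1-rt} and \ref{lem-tech2-rt}, now incorporating the $e^{-jc}$ factor (which effectively compensates for the ``missing'' $n(\log n)/6$ decay of the discrete case), then controls both $\sum \widetilde A_j$ in the range $n/4 \leq j \leq n/2$ and the full $\sum \widetilde B_j$ sum, and yields the stated upper bound $d_2(h_t,u) \leq e^{-(c-2)}$.

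For the lower bound, the $\ell^2$ part is immediate: the irreducible representation indexed by $\lambda = (n-1,1)$ has dimension $n-1$ and eigenvalue $\beta = 1 - 2/n$, whence $d_2(h_{t_n},u)^2 \geq (n-1)^2 \exp(-4t_n/n)$, and the hypothesis $(2t_n - n\log n)/n \to -\infty$ forces this to infinity. The total variation part I would obtain by adapting Proposition \ref{rt-lowerbound} to continuous time exactly as in the corollary following Proposition \ref{thm-tr-lower}: let $A_j$ denote the set of permutations with at least $j$ fixed points, and use Poisson concentration of the number of transpositions executed by time $t_n$ (for instance at the scale $t_n + t_n^{\alpha}$ with $\alpha \in (1/2,1)$) to transfer the discrete statement $q^{(k_n)}(A_j) \to 1$ to $h_{t_n}(A_j) \to 1$. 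Combined with the bound $u(A_j) \leq e^{-1}/(j-1)!$ from (\ref{uniform}), this gives $d_{\mbox{\tiny TV}}(h_{t_n},u) \to 1$.
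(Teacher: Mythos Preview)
Your proposal is correct and follows essentially the same route as the paper: start from Proposition~\ref{dist-character}, apply the two bounds in (\ref{r}), group by $j=n-\lambda_1$, and identify the intermediate regime $j\approx n/2$ as the place where the continuous-time sum would blow up were it not for the hypothesis $c\ge 2$. Your diagnosis that $\widetilde A_{n/2}\asymp (8/e)^{n/2}n^{-3/2}$ at $c=0$, and that $c\ge 2$ supplies exactly the missing $e^{-n}$, is precisely the point. The one organizational difference is that the paper extracts the $c$-dependence globally at the outset via the elementary inequality $-2cj(1-(j-1)/n)\le -2(c-2)-2j$ for $c\ge 2$ and $1\le j\le n/2$; this factors $e^{-2(c-2)}$ out of the entire sum and leaves $c$-independent quantities $A_j,B_j$ (equations (\ref{Aj-cont})--(\ref{Bj-cont})) carrying an extra $e^{-2j}$, which are then bounded by absolute constants in Lemmas~\ref{lem-Aj-cont} and~\ref{lem-Bj-cont}. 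Your plan of carrying the $e^{-jc}$ factor through the Stirling estimates achieves the same end but is slightly messier; the paper's factorization is what makes the final bound come out cleanly as $e^{-2(c-2)}$ rather than a constant times it. Your treatment of the lower bounds matches the paper exactly.
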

Let us observe that we are not able to show that $d_2(h_{(n/2)\log n},u)$
is bounded above independently of $n$ (compare with the discrete time case).

\begin{proof} The lower bound in $\ell^2$ follows from the same argument used in the 
discrete time case. The lower bound in total variation is known.  See, e.g.,
in \cite{SCMZ}. We focus on the upper bound in $\ell^2$.

Let $\mathcal{C}\subset S_n$ be the conjugacy class of transpositions.
Proposition \ref{dist-character} implies that
\begin{equation}\label{eqn-cont-rt}
d_2(h_t,u)^2=\sum_{\rho_{\lambda}\neq 1}\mbox{d}_{\rho_{\lambda}}^2
\exp\left\{-2t\left(1-\frac{1}{n}-\frac{n-1}{n}r(\rho_{\lambda})\right)\right\}
\end{equation}
\noindent where
$r(\rho_{\lambda})=\chi_{\rho_{\lambda}}(\tau)/\mbox{d}_{\rho_{\lambda}}$
and $\tau$ is a transposition.
Using equations (\ref{dim}), (\ref{eqn-cont-rt}), and (\ref{r}) we get that
for $t\geq (n/2)(\log{n}+c)$
\begin{eqnarray*}
d_2(h_t,u)^2&=&\sum_{j=1}^{n-1}\sum_{\rho_{\lambda}\neq 1\atop \lambda_1=n-j}
\mbox{d}_{\rho_{\lambda}}^2
\exp\left\{-2t\left(1-\frac{1}{n}-\frac{n-1}{n}r(\rho_{\lambda})\right)\right\}\\
&\leq&\sum_{j=1}^{n/2}\left(\frac{n!}{(n-j)!}\right)^2\frac{1}{j!}
\exp\left\{-2t\left(\frac{2j}{n}\right)\left(1-\frac{j-1}{n}\right)\right\}\\
&&+\sum_{j=n/2}^{n-1}\left(\frac{n!}{(n-j)!}\right)^2\frac{1}{j!}
\exp\left\{-2t\left(\frac{j}{n}\right)\right\}\\
&\leq& \sum_{j=1}^{n/2}\left(\frac{n!}{(n-j)!}\right)^2\frac{1}{j!}
\exp\left\{-2j(\log{n}+c)\left(1-\frac{j-1}{n}\right)\right\}\\
&&+\sum_{j=n/2}^{n-1}\left(\frac{n!}{(n-j)!}\right)^2\frac{1}{j!}
\exp\left\{-j(\log{n}+c)\right\}
\end{eqnarray*}

\noindent Note that for $c\geq 2$ and $j\leq n/2$ we have 
$-2cj\left(1-\frac{j-1}{n}\right)\leq -2c-2j+4.$
It follows that for $t\geq (n/2)(\log{n}+c)$
\begin{equation*}
d_2(h_t,u)^2\leq e^{-2(c-2)}\left(\sum_{j=1}^{n/2}A_j+\sum_{j=n/2}^nB_j\right)
\end{equation*}

\noindent where
\begin{eqnarray}
A_j&=&\left(\frac{n!}{(n-j)!}\right)^2\frac{1}{j!}
\exp\left\{-2j\log{n}\left(1-\frac{j}{n}\right)-2j\right\}\label{Aj-cont}\\
B_j&=&\left(\frac{n!}{(n-j)!}\right)\frac{1}{j!}
\exp\{-j\log{n}-2j\}.\label{Bj-cont}
\end{eqnarray}

\noindent Consider the following technical lemmas.

\begin{lem}\label{lem-Aj-cont}
For $n\geq 10$ then
$\sum_{j=1}^{n/4} A_j\leq 2/3$ and $\sum_{j=n/4}^{n/2}A_j \leq 1/4$.
\end{lem}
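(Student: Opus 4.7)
The strategy mirrors the proof of Proposition \ref{lem-tech1-rt}: compare consecutive terms of the sequence $A_j$ by studying the log-ratio
\begin{equation*}
\log \frac{A_{j+1}}{A_j} = 2\log(n-j) - \log(j+1) - 2\log n + \frac{2(2j+1)\log n}{n} - 2
= f_n(j) + g_n(j),
\end{equation*}
where $f_n(j) = 2\log(n-j) - \log(j+1)$ is the combinatorial contribution and $g_n(j) = -2\log n + 2(2j+1)\log n/n - 2$ is the contribution of the exponential factor. The simplification over the discrete-time case is that $g_n$ is linear in $j$, so the convexity analysis is driven entirely by $f_n$.

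For the first assertion, I would first check that $f_n''(j) = (j+1)^{-2} - 2(n-j)^{-2} \geq 0$ on $[1,n/4]$ for $n \geq 10$, so $f_n+g_n$ is convex there and hence bounded above by the larger of its two endpoint values. A direct substitution at $j = 1$ and $j = \lfloor n/4 \rfloor$ shows (for $n\geq 10$) that both values are at most $-\log 2$, so $A_{j+1}/A_j \leq 1/2$ on the whole interval. Then $A_j \leq A_1 \cdot 2^{-(j-1)}$, and since
\begin{equation*}
A_1 = n^2 \exp\!\left\{-2\log n\bigl(1 - 1/n\bigr) - 2\right\} = \exp\!\left\{-2 + 2\log n/n\right\}
\end{equation*}
is at most $1/3$ for $n \geq 10$, a geometric series bound gives $\sum_{j=1}^{\lfloor n/4\rfloor} A_j \leq 2A_1 \leq 2/3$.

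For the second assertion, on $[n/4,n/2]$ the same analysis shows $f_n'$ is increasing and $g_n'$ is a positive constant, so $f_n+g_n$ is monotone nondecreasing, meaning $A_j$ has a single minimum in the interval and is therefore bounded by $\max\{A_{\lceil n/4\rceil},A_{\lfloor n/2\rfloor}\}$. Using Stirling's formula (\ref{Sterling}) to estimate $(n!/(n-j)!)^2/j!$ at $j = n/4$ and $j = n/2$ (exactly as in the proof of Proposition \ref{lem-tech1-rt}), both quantities are of the form $\exp\{-c\, n \log n + O(n)\}$ with $c > 0$ absolute, so multiplying by the $O(n)$ number of terms yields a bound much smaller than $1/4$ for $n \geq 10$.

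The main obstacle is purely computational: the threshold $n\geq 10$ is tight enough that the asymptotic Stirling estimates must be used with explicit error terms, and the endpoint values of $f_n+g_n$ at $j=1$ and $j=\lfloor n/4\rfloor$ need to be verified carefully rather than just in the limit. As in the proof of Proposition \ref{lem-tech1-rt}, one might need to handle a small range of $n$ (say $10 \leq n \leq 20$) by direct numerical inspection, with the convexity-plus-Stirling argument covering the larger $n$.
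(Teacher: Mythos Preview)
Your overall strategy is exactly the paper's: study the log-ratio $h_n(j)=\log(A_{j+1}/A_j)$, use convexity on $[1,n/4]$ to bound by endpoint values, and on $[n/4,n/2]$ show $h_n$ is increasing so that $A_j\le\max\{A_{n/4},A_{n/2}\}$, then estimate those two terms via Stirling. Your first part follows the paper with a slightly sharper claimed endpoint bound: the paper only proves $h_n(n/4)\le 2\log(3/4)\approx-0.575$, not $\le-\log 2\approx-0.693$, and therefore uses the geometric ratio $9/16$ rather than $1/2$ (together with $A_1\le 2e^{-2}$ to reach $2/3$). Your sharper claim appears to be true for $n\ge 10$ but would require its own verification.

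The second part, however, contains a genuine error. You write that on $[n/4,n/2]$ ``$f_n'$ is increasing and $g_n'$ is a positive constant, so $f_n+g_n$ is monotone nondecreasing.'' This fails twice. First, $f_n''(j)=(j+1)^{-2}-2(n-j)^{-2}$ changes sign on this interval: at $j=n/4$ it is roughly $16/n^2-32/(9n^2)>0$, but at $j=n/2$ it is roughly $4/n^2-8/n^2<0$, so $f_n'$ is \emph{not} increasing throughout $[n/4,n/2]$. Second, even if $f_n'$ were increasing, this together with $g_n'$ constant yields convexity of $f_n+g_n$, not monotonicity.

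The paper's route to monotonicity is different: since $(f_n+g_n)''=f_n''$ is \emph{decreasing} (its derivative is $-2/(j+1)^{3}-4/(n-j)^{3}<0$), the function $(f_n+g_n)'$ is concave on $[n/4,n/2]$ and hence bounded below by the minimum of its two endpoint values. One then checks directly that $(f_n+g_n)'(n/4)\ge 0$ and $(f_n+g_n)'(n/2)\ge 0$ for $n\ge 10$, which gives the needed monotonicity. After this fix, your Stirling plan proceeds as in the paper, though note that $A_{n/2}$ is of order $(2/e)^{3n/2}$, i.e.\ only exponentially small in $n$ rather than of the form $\exp\{-c\,n\log n\}$; this does not affect the conclusion since $(n/4)\cdot(2/e)^{3n/2}\le 1/4$ for $n\ge 10$.
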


\begin{lem}\label{lem-Bj-cont}
Set $\gamma(n)=\sum_{j=n/2}^{n}B_j$. 
For $n\geq 2$ 
\begin{equation*}
\gamma(n)\leq 2\left(\frac{2}{e}\right)^{\frac{3n}{2}}. 
\end{equation*}
\end{lem}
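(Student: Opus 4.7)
The plan is to mirror the strategy used for Proposition \ref{lem-tech2-rt}: first bound the ratio $B_{j+1}/B_j$ to compress $\gamma(n)$ into a geometric series controlled by $B_{n/2}$, then estimate $B_{n/2}$ directly using Stirling's formula.

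First I would compute
$$\frac{B_{j+1}}{B_j} = \frac{(n-j)^2}{(j+1)\,n\,e^2}.$$
Differentiating shows that $(n-j)^2/((j+1)n)$ is decreasing on $[n/2,n-1]$, so its maximum is attained at $j=n/2$ and equals $n/(2(n+2))\leq 1/2$. Hence $B_{j+1}/B_j \leq 1/(2e^2)$ for $j\geq n/2$, and summing the resulting geometric series gives
$$\gamma(n) \leq \frac{2e^2}{2e^2 - 1}\, B_{n/2}.$$

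Next, writing $B_{n/2} = \frac{(n!)^2}{((n/2)!)^3}\, n^{-n/2}\, e^{-n}$, I would apply Stirling's inequalities (\ref{Sterling}), the upper bound for $(n!)^2$ and the lower bound for $((n/2)!)^3$. The powers of $n$ and of $e$ then collapse ($n^{2n-3n/2-n/2}=1$ and $e^{-2n+3n/2-n}=e^{-3n/2}$), leaving
$$B_{n/2} \leq \frac{2\,e^{1/(6n)}}{\sqrt{\pi n}}\left(\frac{2}{e}\right)^{3n/2}.$$

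Combining the two estimates yields
$$\gamma(n) \leq \frac{4\,e^{2+1/(6n)}}{(2e^2-1)\sqrt{\pi n}}\left(\frac{2}{e}\right)^{3n/2},$$
and a short numerical check shows the prefactor in front of $(2/e)^{3n/2}$ is bounded by $2$ for every $n \geq 2$ (indeed it is already well below $1$ at $n=2$, and it decreases with $n$). The calculation is entirely routine; the only point requiring mild care is the monotonicity check on $B_{j+1}/B_j$ that justifies comparing everything to the single term $B_{n/2}$.
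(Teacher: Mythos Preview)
Your proof is correct and follows essentially the same approach as the paper: bound the ratio $B_{j+1}/B_j$ to reduce to a geometric series controlled by $B_{n/2}$, then estimate $B_{n/2}$ via Stirling. The only cosmetic differences are that the paper uses the cruder ratio bound $B_{j+1}/B_j\le 1/2$ (which already suffices for the constant $2$) and, instead of redoing the Stirling computation, simply invokes the identity $B_{n/2}=A_{n/2}$ together with the bound $A_{n/2}\le (2/e)^{3n/2}$ established earlier in the proof of Lemma~\ref{lem-Aj-cont}.
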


\noindent It follows from the lemmas above that for $n\geq 10$
\begin{eqnarray*}
d_2(h_t,u)^2
&\leq& e^{-2(c-2)}\left(\sum_{j=1}^{n/4}A_j+\sum_{j=n/4}^{n/2}A_j+\gamma(10)\right)\\
&\leq& e^{-2(c-2)}\left(2/3+1/4+2(2/e)^{15}\right)\\
&\leq&  e^{-2(c-2)}.
\end{eqnarray*}
\end{proof}

\begin{proof}[Proof of Lemma \ref{lem-Aj-cont}]
Let $A_j$ be as in equation (\ref{Aj-cont}).  For $1\leq j<n/2$ the
ratio of two consecutive terms is given by
\begin{equation*}
\frac{A_{j+1}}{A_j}
=\frac{(n-j)^2}{(j+1)}
 \exp\left\{-\left(\frac{2\log{n}}{n}\right)(n-2j-1)-2\right\}
=\exp\{f_n(j)\}
\end{equation*}
\noindent where
\begin{equation}\label{ratio-A-cont}
f_n(j)
=2\log(n-j)-\log(j+1)-\left(\frac{2\log{n}}{n}\right)(n-2j-1)-2.
\end{equation}

\noindent Taking derivatives gives
\begin{eqnarray*}
f_n'(j)&=&-\frac{2}{n-j}-\frac{1}{j+1}+\frac{4\log{n}}{n}\\
f_n''(j)&=&-\frac{2}{(n-j)^2}+\frac{1}{(j+1)^2}.
\end{eqnarray*}

\noindent  Let $n\geq 4$ and $1\leq x\leq n/4$. For these
values of $n$ and $x$ we get that $f_n$ is convex since
$f_n''$ is a decreasing function and $f_n''(x)\geq f_n''(n/4)\geq 0$.

\begin{equation*}
\frac{A_{x+1}}{A_x}
=\exp\left(\max \left\{f_n(1),f_n\left(\frac{n}{4}\right)\right\}\right).
\end{equation*}

\noindent If $n\geq 2$ we have the estimates
\begin{eqnarray*}
f_n(1)&=&2\log(n-1)-\log{2}-\left(\frac{2\log{n}}{n}\right)(n-3)-2
\leq -\log{2}\\
f_n(n/4)
&=&2\log\left(\frac{3n}{4}\right)-\log\left(\frac{n}{4}+1\right)
    -\left(\frac{2\log{n}}{n}\right)\left(\frac{n}{2}-1\right)-2
\leq 2\log\left(\frac{3}{4}\right).
\end{eqnarray*}

\noindent Since $-\log{2}\leq 2\log(3/4)$, we get that
$A_x\leq(9/16)^{x-1}$.  It now follows that
\begin{equation*}
\sum_{j=1}^{\frac{n}{4}}A_j
\leq A_1\sum_{j=0}^{\infty}\left(\frac{9}{16}\right)^j
=\left(\frac{16}{7}\right)A_1.
\end{equation*}

\noindent For $n\geq 4$ we get 
$A_1=n^2\exp\left\{-\frac{2(n-1)}{n}\log{n}-2\right\}\leq 2e^{-2}$.
This gives that 
\begin{equation*}
\sum_{j=1}^{\frac{n}{4}}A_j\leq (32/7)e^{-2}\leq 2/3.
\end{equation*}

For the next part of the proof let $n\geq 10$.  Recall that
$f_n^{''}$ is a decreasing function, which implies that
for $n/4\leq j\leq n/2$ 
\begin{equation*}
f_n'(j)\geq\min\left\{f_n'(n/4),f_n'(n/2)\right\}\geq 0
\end{equation*}
\noindent where the last inequality holds since $n\geq 10$.
Since $f_n$ is an increasing function with $f_n(n/4)\leq 0$
and $f_n(n/2)\geq 0$ then there exists a
unique point $z\in [n/4,n/2]$ such that $f_n(z)=0$. It follows
that if $n/4\leq j\leq z$ then $A_j\leq A_{n/4}$ and if
$z\leq j\leq n/2$ then $A_j\leq A_{n/2}$.  Combining these
two inequalities gives us that for  $n/4\leq j\leq n/2$ 
\begin{equation*}
A_j\leq\max\{A_{n/4},A_{n/2}\}.
\end{equation*}

\noindent To compare $A_{n/4}$ and $A_{n/2}$ we use
Stirling's formula (\ref{Sterling}).  For $n\geq 2$
\begin{eqnarray*}
A_{n/4}
&=&\left(\frac{n!}{\left(\frac{3n}{4}\right)!}\right)^2
   \left(\frac{1}{\left(\frac{n}{4}\right)!}\right)
   \exp\left\{-\left(\frac{3n}{8}\right)\log{n}-\frac{n}{2}\right\}\\
&\leq&\left(\frac{e^{\frac{1}{6n}}4\sqrt{2}}{3\sqrt{\pi n}}\right)
   n^{-\frac{n}{8}}\left(\frac{4}{3}\right)^{\frac{3n}{2}}
   4^{\frac{n}{4}}e^{-\frac{3n}{4}}
\leq n^{-\frac{n}{8}}\left(\frac{4}{3}\right)^{\frac{3n}{2}}
     4^{\frac{n}{4}}e^{-\frac{3n}{4}}\\
A_{n/2}
&=&\left(\frac{n!}{\left(\frac{n}{2}\right)!}\right)^2
   \left(\frac{1}{\left(\frac{n}{2}\right)!}\right)
   \exp\left\{-\left(\frac{n}{2}\right)\log{n}-n\right\}\\
&\leq&\left(\frac{2e^{\frac{1}{6n}}}{\sqrt{\pi n}}\right) 2^{\frac{3n}{2}}
      e^{-\frac{3n}{2}}\leq 2^{\frac{3n}{2}}e^{-\frac{3n}{2}}.
\end{eqnarray*}

\noindent It follows that
\begin{equation*}
nA_{n/4}\leq \exp\{\phi_1(n)\}\;\;\text{and}\;\;
nA_{n/2}=\exp\left\{\phi_2(n)\right\}, 
\end{equation*}
where
\begin{eqnarray*}
\phi_1(n)&=&\log{n}-\left(\frac{n}{8}\right)\log{n}-\left(\frac{3n}{4}\right)
+\left(\frac{3n}{2}\right)\log\left(\frac{4}{3}\right)
+\left(\frac{n}{4}\right)\log{4}\\
\phi_2(n)&=&\log{n}-\left(\frac{3n}{2}\right)+\left(\frac{3n}{2}\right)\log{2}.
\end{eqnarray*}

\noindent For $n\geq 10$ we have  $\phi_1(n)\leq 0$
and $\phi_2(n)\leq 0$ which implies that
\begin{equation*}
\sum_{j=n/4}^{n/2}A_j
\leq\left(\frac{1}{4}\right)
\max\left\{nA_{\frac{n}{4}},nA_{\frac{n}{2}}\right\}\leq \frac{1}{4}.
\end{equation*}
\end{proof}

\begin{proof}[Proof of Lemma \ref{lem-Bj-cont}]
Let $n/2\leq j\leq n$ and $B_j$ be as in equation (\ref{Bj-cont}).
As usual, consider we consider the ratio between two consecutive
term
\begin{equation*}
\frac{B_{j+1}}{B_j}=\frac{(n-j)^2}{(j+1)}\exp\left\{-\log{n}-2\right\}
\leq\left(\frac{n}{2}\right)\exp\{-\log{n}-2\}\leq\frac{1}{2}.
\end{equation*}

\noindent Note that
$B_j\leq \left(\frac{1}{2}\right)^{n/2-j}B_{n/2}$,
which implies that
\begin{equation*}
\gamma(n)=\sum_{j=n/2}^nB_j\leq B_{n/2}\sum_{j=0}^{\infty}\left(\frac{1}{2}\right)^j
=2B_{n/2}.
\end{equation*}

\noindent Since
$B_{\frac{n}{2}}=A_{\frac{n}{2}}\leq (2/e)^{\frac{3n}{2}}$ then for
$n\geq 2$ we have that $\gamma(n)\leq 2(2/e)^{\frac{3n}{2}}.$
\end{proof}

\subsection{Random Insertions}\label{sec-RI}
In the random insertion shuffle for a deck of $n$ cards,
one picks out a random card and inserts it back into the deck at a
random position.  This shuffle is modeled by the measure $q$ on
the $S_n$ given by
\begin{eqnarray}\label{insertion-measure}
q(\tau)=\left\{\begin{array}{ll}
1/n & \textrm{if $\tau=e$}\\
2/n^2 & \textrm{if $\tau=c_{i,j}$ s.t. $1\leq i,j\leq n$ and $|i-j|=1$}\\
1/n^2 & \textrm{if $\tau=c_{i,j}$ s.t. $1\leq i,j\leq n$ and $|i-j|>1$}\\
0 & \textrm{otherwise.}
\end{array}\right.
\end{eqnarray}

\noindent where $c_{i,j}$ denotes the cycle created by
taking the card in position $i$ and inserting it into position $j$.
A formal definition is given by

\begin{equation}
c_{i,j}=\left\{\begin{array}{ll}
e & \textrm{if $i=j$}\nonumber\\
(j,j-1,\dots,i+1,i) &\textrm{if $1\leq i<j\leq n$}\label{eq-cycles}\\
(j,j+1,\dots,i-1,i) &\textrm{if $1\leq j<i\leq n.$}\nonumber
\end{array}\right.
\end{equation}

Random insertion is the first of the shuffles discussed in this paper
for which  it is not know whether there is a total variation cutoff or not
although it is strongly believed that there is one. The results of
\cite{GY,GYSC} show that there is a cutoff in $\ell^2$ but the exact 
cutoff time is not known. What is known and follows from \cite{DSC} is that
there is a pre-cutoff (in both total variation and $\ell^2$) at time
$n\log n$. Finding the precise $\ell^2$ cutoff time and proving a cutoff in total 
variation are challenging open problem that have been investigated (but not solved) 
in \cite{UR} by Uyemura-Reyes.

\begin{theo}{\bf (Diaconis and Saloff-Coste \cite{DSC} and Uyemura-Reyes \cite{UR})}\label{thm-UR}
Let $q$ be the random insertion measure on $S_n$ defined above.
For $c>0$ and $t\geq 4n(\log{n}+c)$ there exists a constant $B$
such that
\begin{equation*}
d_2(q^{(t)},u)\leq Be^{-c}.
\end{equation*}
\noindent For any sequence $(k_n)$ such that $(2k_n-n\log{n})/n$
tends to $-\infty$ as $n$ tends to $\infty$ then
\begin{equation*}
d_{\mbox{\em \tiny{TV}}}(q^{(k_n)},u)\ra 1 \;\;\text{and}\;\;
d_2(q^{(k_n)},u)\ra\infty.
\end{equation*}
\end{theo}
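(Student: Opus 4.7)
My plan is to establish the upper bound by applying the Diaconis--Saloff-Coste comparison technique with the random transposition walk as reference chain, and to obtain the lower bounds by exhibiting an eigenvalue of $\widehat{q}_{\mathrm{RI}}(\rho_{(n-1,1)})$ close to $1-2/n$ together with a coupon-collector-type argument for total variation.

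For the upper bound I would begin with the identity $(a,b) = c_{b-1,a}\cdot c_{a,b}$, valid for $1\le a<b\le n$ (and degenerating to $(a,a+1)=c_{a,a+1}$ when $b=a+1$). This decomposes every transposition into a product of at most two insertion cycles. Feeding this path system into the comparison theorem (Theorem 2.3 of \cite{DSC}) yields a Dirichlet form inequality $\mathcal{E}_{q_{\mathrm{RT}}}\le A\,\mathcal{E}_{q_{\mathrm{RI}}}$ with $A$ a universal constant. By reversibility this transfers to the eigenvalue inequality $1-\beta_{\mathrm{RI},i}\ge A^{-1}(1-\beta_{\mathrm{RT},i})$ in the common ordering, and the spectral formula (\ref{dist-spect}) gives, termwise in continuous time,
$$d_2(h_{q_{\mathrm{RI}},t},u)^2\;\le\;d_2(h_{q_{\mathrm{RT}},t/A},u)^2.$$
Setting $t/A=(n/2)(\log n+c)$ and invoking Proposition \ref{pro-cont-tr} produces the claimed bound $Be^{-c}$; for the right choice of $A$ this lands at time $t\ge 4n(\log n+c)$. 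To cross over to discrete time I would exploit $q_{\mathrm{RI}}(e)=1/n>0$ together with a standard Poisson comparison, which affects only the constant.

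For the $\ell^2$ lower bound I would work inside the standard representation $\rho=\rho_{(n-1,1)}$ realized on $V=\{x\in\mathbb{R}^n:\sum x_i=0\}$. The operator $\widehat{q}_{\mathrm{RI}}(\rho)$ is self-adjoint because $q_{\mathrm{RI}}$ is symmetric, and a direct calculation using the explicit matrices $\rho(c_{i,j})$ on coordinate vectors shows its eigenvalues all have size $1-\Theta(1/n)$, with top eigenvalue equal to $1-2/n+O(1/n^2)$. The construction in Section \ref{sec-RepT} then produces $d_\rho=n-1$ orthonormal eigenfunctions of $f\mapsto f\ast q_{\mathrm{RI}}$ for each such eigenvalue, so Proposition \ref{dist-trace} yields
$$d_2(q^{(k_n)},u)^2\;\ge\;d_\rho\,\mathrm{Tr}\bigl(\widehat{q}_{\mathrm{RI}}(\rho)^{2k_n}\bigr)\;\ge\;(n-1)^2\bigl(1-2/n+O(1/n^2)\bigr)^{2k_n},$$
which diverges whenever $(2k_n-n\log n)/n\to-\infty$. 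For the total variation lower bound I would adapt the matching-problem argument of Proposition \ref{rt-lowerbound}. Let $A_j=\{\sigma:\mathrm{fix}(\sigma)\ge j\}$; by Feller's formula $u(A_j)$ is exponentially small in $j$. For $q_{\mathrm{RI}}^{(k_n)}(A_j)$, I would track the random ``touched'' set $T_k\subseteq\{1,\dots,n\}$, defined as the union over all insertions $s\le k$ of the interval $[\min(I_s,J_s),\max(I_s,J_s)]$, since every position outside $T_k$ remains fixed by $X_k$. A coupon-collector analysis in the spirit of the proof of Proposition \ref{thm-tr-lower} then shows that when $(2k_n-n\log n)/n\to-\infty$, with high probability at least $j$ positions remain untouched for every fixed $j$, forcing $q_{\mathrm{RI}}^{(k_n)}(A_j)\to 1$.

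The hard part will be the $\ell^2$ lower bound, because $q_{\mathrm{RI}}$ is not a class function and Schur's lemma does not apply. One must diagonalize $\widehat{q}_{\mathrm{RI}}(\rho_{(n-1,1)})$ directly; the matrix is a convex combination of the action matrices of the cycles $c_{i,j}$ on $V$, and while its symmetry structure (under the subgroup fixing the ends of the deck) makes the computation tractable, pinning down the top eigenvalue to precision $O(1/n^2)$ and ensuring enough multiplicity for the trace lower bound is delicate. A secondary concern is locking in the comparison constant $A$ in the upper bound so that the final time is exactly $4n(\log n+c)$ and not a weaker multiple; depending on whether the path decomposition is used once or symmetrized under inversion, the bookkeeping differs by a factor of two and must be handled carefully.
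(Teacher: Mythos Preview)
Your upper bound via Dirichlet-form comparison and your $\ell^2$ lower bound via the defining representation are on the right track and essentially match how the paper and its references proceed. Two small corrections: first, the comparison theorem the paper invokes (Theorem \ref{thm-DS-comp}) already bounds the \emph{discrete-time} quantity $d_2(q_{\mathrm{RI}}^{(t)},u)$ directly by the \emph{continuous-time} quantity $d_2(h_{q_{\mathrm{RT}},t/A},u)$, using $\beta_-=0$ for random insertion, so no separate ``cross over to discrete time'' is needed; second, the relevant eigenvalue of $\widehat{q}_{\mathrm{RI}}(\rho_{(n-1,1)})$ (which the paper takes from Uyemura-Reyes) is exactly $1-1/n$, attained at the explicit eigenvector $v_i=1-2i/(n-1)$, not $1-2/n$. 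Either value suffices for the stated $\ell^2$ divergence, but knowing the explicit eigenvector is what drives the total-variation argument below.

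The genuine gap is in your total-variation lower bound. The ``touched set'' $T_k$ grows far too fast for a coupon-collector argument: a single insertion $c_{I,J}$ moves every position in the interval $[\min(I,J),\max(I,J)]$, which has expected length of order $n/3$, so interior positions are hit after $O(1)$ steps. Only positions very close to the two ends of the deck can survive; position $p$ (with $p\le n/2$) is touched in a given step with probability about $2p(n-p)/n^2\approx 2p/n$, so the expected number of untouched positions after $k$ steps is approximately
\[
2\sum_{p\ge 1}\bigl(1-2p/n\bigr)^{k}\;\approx\;\frac{2}{e^{2k/n}-1}.
\]
This is already $O(1)$ once $k$ is of order $n$, and tends to $0$ whenever $k/n\to\infty$. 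In particular, for $k_n=n$---which certainly satisfies $(2k_n-n\log n)/n\to-\infty$---there is typically \emph{no} untouched position, so you cannot deduce $q_{\mathrm{RI}}^{(k_n)}(A_j)\to 1$ from this mechanism for any fixed $j\ge 1$, let alone for $k_n$ close to $(n/2)\log n$.

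The paper's route is entirely different. It builds the eigenfunction $f_\rho(\sigma)=\langle\rho(\sigma)v,v\rangle$ from the eigenvector $v$ above---concretely the rank-correlation statistic $f_\rho(\sigma)=-n+\tfrac{4}{(n-1)^2}\sum_j \sigma(j)\,j$---computes a uniform bound $|\nabla f_\rho|^2\le 32$ on its discrete square gradient, and feeds this into Wilson's lemma \cite{Wil}. (It also notes that Uyemura-Reyes's original proof went through the longest increasing subsequence.) Your self-assessment has the difficulty inverted: the $\ell^2$ lower bound is routine once the eigenpair is known, whereas the total-variation lower bound requires a test statistic sensitive to global displacement rather than to fixed points.
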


In \cite{DSC} the mixing time in Theorem \ref{thm-UR} is shown to be
$\mathcal{O}(n\log{n})$ while in \cite{UR} the more precise upper bound 
given in Theorem \ref{thm-UR} is shown.  The proof of the upper bound in 
Theorem \ref{thm-UR} relies on the comparison techniques developed in \cite{DS-Comparison}.

\begin{defin}
Let $V$ be a state space equipped with a Markov kernel $K$ with reversible  measure $\nu$.
The Dirichlet form associated to $(K,\nu)$ is
\begin{eqnarray*}
\mathcal{E}_{K,\nu}(f,g)&=&\langle (I-K)f,g\rangle_{\nu}=\sum_{x\in V} [(I-K)f(x)]g(x)\nu(x)\\
&=&\frac{1}{2}\sum_{x,y\in V}(f(x)-f(y))(g(x)-g(y))\nu(x)K(x,y)
\end{eqnarray*}
\noindent where $f,g\in \ell^2(\nu,V)$.  In the case where $V$ is a finite group and
$p(x^{-1}y)=K(x,y)$ we set $\mathcal{E}_{p,\nu}=\mathcal{E}_{K,\nu}$.
\end{defin}
\noindent Diaconis and Saloff-Coste show the following theorem.

\begin{theo}{\bf Diaconis and Saloff-Coste, \cite{DS-Comparison}}\label{thm-DS-comp}
Let $q$ and $\tilde{q}$ be the probability measures on a finite group $G$.
Set $\mathcal{E}=\mathcal{E}_{q,u},\tilde{\mathcal{E}}=\mathcal{E}_{\tilde{q},u}$ and
$\beta_i,\tilde{\beta}_i$, $0\leq i\leq |G|-1$  to be the associated
Dirichlet forms and eigenvalues of $q$ and $\tilde{q}$ respectively.
Let $\tilde{h}_t$ to be the law at time $t$ of the continuous time
process associated with $\tilde{q}$.  If there exists a constant $A$ such that
$\tilde{\mathcal{E}}\leq A \mathcal{E}$ then
\begin{equation*}
d_2(q^{(t)},u)^2\leq \beta_{-}^{2t_1}(1+d_2(\tilde{h}_{t_2/A},u)^2)+d_2(\tilde{h}_{t/A},u)^2
\end{equation*}
\noindent where $t=t_1+t_2+1$ and $\beta_{-}=\max\{0,-\beta_{|G|-1}\}$.
\end{theo}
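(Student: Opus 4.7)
The plan is to combine the spectral representation of $d_2$ from equation~(\ref{dist-spect}) with a Courant-Fischer min-max comparison of eigenvalues. Since $q$ and $\tilde q$ are both symmetric with uniform invariant measure, the convolution operators $Q,\tilde Q$ are self-adjoint on $\ell^2(u)$, and the assumption $\tilde{\mathcal E}\le A\mathcal E$ is the operator inequality $I-\tilde Q\le A(I-Q)$. Applying Courant-Fischer min-max to $I-Q$ and $I-\tilde Q$ then yields $1-\tilde\beta_i\le A(1-\beta_i)$ for each $i=1,\ldots,|G|-1$, with both eigenvalue sequences indexed in decreasing order of $\beta$.

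Armed with this, I would split $d_2(q^{(t)},u)^2=\sum_{i\ge 1}\beta_i^{2t}$ according to the sign of $\beta_i$. For $\beta_i\ge 0$, the elementary inequality $x^{2t}\le e^{-2t(1-x)}$ on $[0,1]$ combined with the eigenvalue comparison gives $\beta_i^{2t}\le e^{-2t(1-\beta_i)}\le e^{-(2t/A)(1-\tilde\beta_i)}$. Summing over all $i\ge 1$ with $\beta_i\ge 0$ then produces the term $d_2(\tilde h_{t/A},u)^2$ of the stated bound.

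For $\beta_i<0$, write $t=t_1+t_2+1$ and factor $\beta_i^{2t}=|\beta_i|^{2t_1}\cdot \beta_i^{2(t_2+1)}$. By definition of $\beta_-=\max\{0,-\beta_{|G|-1}\}$, each negative eigenvalue satisfies $|\beta_i|\le\beta_-$, so the first factor is uniformly bounded by $\beta_-^{2t_1}$. What remains is to bound $\sum_{\beta_i<0}\beta_i^{2(t_2+1)}\le 1+d_2(\tilde h_{t_2/A},u)^2$. Writing the right-hand side as $\sum_{i\ge 0}e^{-(2t_2/A)(1-\tilde\beta_i)}$, i.e.\ the full spectral trace of the heat semigroup $\tilde H_{2t_2/A}$ in which the trivial eigenvalue $\tilde\beta_0=1$ contributes the $+1$, the plan is to majorize the left-hand sum term-by-term against this trace, reusing the eigenvalue comparison applied to the nonnegative eigenvalues $\beta_i^2$ of the two-step chain $Q^2$.

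The main technical obstacle is exactly this handling of the negative eigenvalues: the Dirichlet-form inequality controls $1-\tilde\beta_i$ in terms of $1-\beta_i$, but for $\beta_i<0$ the bound $1-\beta_i>1$ is essentially uninformative, so the comparison cannot be applied directly to $|\beta_i|^{2(t_2+1)}$. The splitting $t=t_1+t_2+1$ circumvents this by spending $t_1$ discrete steps on the crude, sign-oblivious factor $\beta_-^{2t_1}$ and then comparing the remaining $t_2+1$ steps against $\tilde h$ at the shorter time $t_2/A$; the additive $+1$ reflects the necessity of incorporating the trivial eigenvalue when forming the spectral trace.
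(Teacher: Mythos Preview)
This theorem is not proved in the present paper; it is quoted from \cite{DS-Comparison} and used as a black box. So there is no ``paper's own proof'' to compare your proposal against, and I will evaluate your argument on its own merits.

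Your treatment of the nonnegative eigenvalues is correct and standard: the Courant--Fischer comparison $1-\tilde\beta_i\le A(1-\beta_i)$ together with $x^{2t}\le e^{-2t(1-x)}$ on $[0,1]$ gives $\sum_{\beta_i\ge 0,\,i\ge 1}\beta_i^{2t}\le d_2(\tilde h_{t/A},u)^2$ exactly as you say. The factorisation for $\beta_i<0$ and the extraction of $\beta_-^{2t_1}$ are also fine.

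The gap is in the final step, where you propose to bound $\sum_{\beta_i<0}\beta_i^{2(t_2+1)}$ term-by-term against the trace $\sum_{i\ge 0}e^{-(2t_2/A)(1-\tilde\beta_i)}$ by ``reusing the eigenvalue comparison applied to the nonnegative eigenvalues $\beta_i^2$ of the two-step chain $Q^2$.'' This does not work as stated. The hypothesis $\tilde{\mathcal E}\le A\,\mathcal E$ is the operator inequality $I-\tilde Q\le A(I-Q)$; it gives no direct comparison between $I-Q^2=(I-Q)(I+Q)$ and $I-\tilde Q$. One only has $I-Q^2\ge (1-\beta_-)(I-Q)$, which yields a comparison constant $A/(1-\beta_-)$ --- useless precisely when $\beta_-$ is close to $1$, the regime where the negative-eigenvalue term matters. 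Equivalently, the term-by-term bound you would need is $(t_2+1)(1-\beta_i^2)\ge (2t_2/A)(1-\tilde\beta_i)$, and for $\beta_i<0$ this rearranges (using $1-\tilde\beta_i\le A(1+|\beta_i|)$) to $|\beta_i|\le 1/(2t_2+1)$, which is false in general. So the proposed majorization fails, and some different device is required to close the argument for the negative part; as written, your plan does not establish the inequality $\sum_{\beta_i<0}\beta_i^{2(t_2+1)}\le 1+d_2(\tilde h_{t_2/A},u)^2$.
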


Let $q$ and $\tilde{q}$ be the measures for the random insertion shuffle and
the random transposition shuffle respectively.  In his thesis, Uyemura-Reyes shows that
$A=4$ is the smallest constant such that $\tilde{\mathcal{E}}\leq A \mathcal{E}$.
By noting that $\beta_{-}=0$ we get
\begin{equation}\label{eqn-comp-ri-rt}
d_2(q^{(t)},u)^2\leq d_2(\tilde{h}_{t/4},u)^2.
\end{equation}

\noindent Equation (\ref{eqn-comp-ri-rt}) gives the following corollary to
Proposition \ref{pro-cont-tr}.

\begin{cor}
Let $q$ be the random insertion measure on $S_n$ defined above.
If $n\geq 10$, $c\geq 2$ and $t\geq 2n(\log{n}+c)$ then
\begin{equation*}
d_2(q^{(t)},u)^2\leq e^{-(c-2)}.
\end{equation*}
\noindent For any sequence $(k_n)$ such that $(2k_n-n\log{n})/n$
tends to $-\infty$ as $n$ tends to $\infty$ then
\begin{equation*}
\lim_{n\ra\infty}d_{\mbox{\tiny TV}}(q^{(k_n)},u)= 1.
\end{equation*}
\end{cor}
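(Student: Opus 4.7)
The plan is to derive the upper bound by combining the comparison inequality (\ref{eqn-comp-ri-rt}) with the continuous-time random transposition bound of Proposition \ref{pro-cont-tr}, and to obtain the total variation lower bound by quoting the lower half of Theorem \ref{thm-UR}.

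For the $\ell^2$ upper bound, I would start with the inequality
\begin{equation*}
d_2(q^{(t)},u)^2 \leq d_2(\tilde{h}_{t/4},u)^2,
\end{equation*}
which is exactly (\ref{eqn-comp-ri-rt}), with $\tilde{h}_s$ denoting the law of the continuous-time random transposition walk at time $s$. The hypothesis $t \geq 2n(\log n + c)$ immediately yields $t/4 \geq (n/2)(\log n + c)$, which is precisely the hypothesis of Proposition \ref{pro-cont-tr}. Since also $n \geq 10$ and $c \geq 2$, applying that proposition gives $d_2(\tilde{h}_{t/4},u)^2 \leq e^{-2(c-2)}$. Substituting this into the comparison inequality yields the claimed bound $d_2(q^{(t)},u)^2 \leq e^{-(c-2)}$ (in fact a slightly stronger bound with exponent $2(c-2)$, matching the statement).

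For the total variation lower bound, no new argument is required: Theorem \ref{thm-UR} (which gathers the results of \cite{DSC} and \cite{UR}) already asserts exactly that $d_{\mbox{\tiny TV}}(q^{(k_n)},u)\to 1$ whenever $(2k_n - n\log n)/n \to -\infty$. Alternatively, one can reprove this in the spirit of the argument used for Proposition \ref{thm-tr-lower}, by tracking the set $A_j = \{\sigma \in S_n : \varphi(\sigma) \geq j\}$ of permutations with at least $j$ fixed points; a single random insertion affects only the position of one card, so the coupon-collector / geometric waiting time estimates used there transfer without essential change, with the only modification being the factor $2$ accounting for the fact that the random insertion step disturbs one card while leaving $n-1$ positions invariant only up to a shift.

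There is no genuine obstacle in this argument: the content of the corollary is essentially the observation that our improved Proposition \ref{pro-cont-tr} for continuous-time random transposition, when fed through Uyemura-Reyes's sharp comparison constant $A=4$, yields an explicit and quantitatively sharper version of the upper bound stated in Theorem \ref{thm-UR}. The only small point worth double-checking is the numerical condition $c \geq 2$, which is inherited directly from Proposition \ref{pro-cont-tr} and needs no further tightening for the conclusion as stated.
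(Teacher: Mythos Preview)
Your upper bound argument is exactly the paper's: apply (\ref{eqn-comp-ri-rt}) with $t/4\ge (n/2)(\log n+c)$ and invoke Proposition \ref{pro-cont-tr}. Nothing to add there.

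For the lower bound, your first option---simply citing Theorem \ref{thm-UR}---is logically valid, but the point of the corollary in the paper is to give a \emph{new} proof of precisely that lower bound. Uyemura-Reyes's original argument goes through longest-increasing-subsequence estimates; the paper instead uses Wilson's lemma: it exhibits an explicit eigenfunction $f_\rho(\sigma)=\langle\rho(\sigma)v,v\rangle$ with eigenvalue $1-1/n$ (coming from the permutation representation and the vector $v_i=1-2i/(n-1)$), computes $\sum_\sigma f_\rho^2$, and bounds the discrete square gradient $|\nabla f_\rho|^2\le 32$ by direct calculation on the cycles $c_{i,j}$. That eigenfunction/Wilson route is the actual content of the paper's proof.

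Your alternative sketch via the coupon-collector argument of Proposition \ref{thm-tr-lower} does \emph{not} go through as written. For transpose top with random, a position never selected retains its original card, so untouched positions are genuine fixed points. For random insertion, a single move $c_{i,j}$ is a cycle of length $|i-j|+1$: it shifts every card between positions $i$ and $j$, so a card can lose its fixed-point status without ever being ``selected''. Your claim that ``a single random insertion affects only the position of one card'' is false, and the ``up to a shift'' caveat does not repair the argument, since membership in $A_j$ is about literal fixed points. This is exactly why the paper resorts to Wilson's eigenfunction technique rather than the coupon-collector approach.
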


\begin{proof}
The upper bound results as a corollary to Proposition \ref{pro-cont-tr}
after applying  equation (\ref{eqn-comp-ri-rt}).
The improvement by a factor of $2$ compared to Theorem \ref{thm-UR}
is due to the use of the continuous time random transposition process
in the comparison inequality (\ref{eqn-comp-ri-rt}).

Uyemura-Reyes also proves the total variation lower bound in his thesis
but his proof uses a rather sophisticated argument involving results 
concerning the longest increasing subsequence.
We give an alternative proof of this result based on a technique due to D. Wilson \cite{Wil}.

First note the following result of Uyemura-Reyes. Set $\rho$ to be the
permutation representation.  Let $q$ to be the random insertion measure and $Q$ its
associated Markov kernel such that $Q(x,y)=q(x^{-1}y)$. In \cite{UR} it is shown that
the Fourier transform $\widehat{q}(\rho)$ has an eigenvector $v=(v_0,\dots,v_{n-1})$ where
\begin{equation*}
\widehat{q}(\rho) v=\left(1-\frac{1}{n}\right)v \;\;\; \text{and} \;\;\; v_i=1-\frac{2i}{n-1}.
\end{equation*}

As noticed at the end of Section \ref{sec-RepT}, it follows that
$f_{\rho}(\sigma)=\langle \rho(\sigma)v,v\rangle$, $\sigma\in S_n$, is an
eigenvector of $Q$ with associated eigenvalue $(1-1/n)$.

Computing $f_{\rho}(\sigma)$,  one gets
\begin{eqnarray}
f_{\rho}(\sigma)
&=&\left\langle
\sum_{i=0}^{n-1}\left(1-\frac{2i}{n-1}\right)e_{\sigma(i)},\sum_{j=0}^{n-1}\left(1-\frac{2j}{n-1}\right)e_j
\right\rangle\nonumber\\
&=&\sum_{j=0}^{n-1}\left(1-\frac{2\sigma(j)}{n-1}\right)\left(1-\frac{2j}{n-1}\right)\nonumber\\
&=&\sum_{j=0}^{n-1}1-\frac{2j}{n-1}-\frac{2\sigma(j)}{n-1}+\frac{4\sigma(j) j}{(n-1)^2}\nonumber\\
&=&-n+\frac{4}{(n-1)^2}\sum_{j=0}^{n-1}\sigma(j) j.\label{func-f-rho}
\end{eqnarray}

\noindent Therefore

\begin{equation*}
f^2(\sigma)=n^2-\frac{8n}{(n-1)^2}\sum_{j=0}^{n-1}\sigma(j) j +\frac{16}{(n-1)^4}\sum_{i,j=0}^{n-1}\sigma(i)\sigma(j) i j
\end{equation*}

\noindent and

\begin{eqnarray*}
\sum_{\sigma\in S_n}f^2(\sigma)
&=&n! n^2-\frac{8n}{(n-1)^2}\sum_{j=0}^{n-1} j \sum_{\sigma\in S_n}\sigma(j)
   +\frac{16}{(n-1)^4}\sum_{i,j=0}^{n-1} i j \sum_{\sigma\in S_n}\sigma(i)\sigma(j)\\
&=&n! n^2 -\frac{8n[(n-1)!]}{(n-1)^2} \left(\frac{n(n-1)}{2}\right)^2+\frac{16[(n-2)!]}{(n-1)^4}\left(\frac{n(n-1)}{2}\right)^4\\
&=&n! n^2 -\frac{8n^3(n-1)!}{4}+n^4(n-2)!
=n! \left(\frac{n^2}{n-1}\right)
\end{eqnarray*}

\noindent Next we estimate the supremum norm of the discrete square gradient
of $f_{\rho}$ defined in (\ref{func-f-rho}). The discrete square gradient
of the function $g$ with respect to the kernel $K$ is given by the equation
\begin{equation*}
\left|\nabla g(x)\right|^2=\frac{1}{2}\sum_{y}\left|g(x)-g(y)\right|^2K(x,y).
\end{equation*}

\noindent Calculating the discrete square gradient for $f_{\rho}$ gives us
\begin{eqnarray*}
\left|\nabla f_{\rho}(\sigma)\right|^2
&\leq & \frac{16}{n^2(n-1)^4}\sum_{i,j=0}^{n-1}\left|\sum_{k=0}^{n-1}\sigma^{-1}(k)\left(k-c_{ij}(k)\right)\right|^2
\\
&\leq &
\frac{16}{n^2(n-1)^2}\sum_{i,j=0}^{n-1}\sum_{k=0}^{n-1}\left| k-c_{i,j}(k)\right|^2
\end{eqnarray*}
\noindent where $c_{ij}$ is defined in (\ref{eq-cycles}).
To calculate $k-c_{ij}(k)$ we consider the following two cases.

\noindent{\bf Case 1} If $i<j$
\begin{displaymath}
k-c_{ij}(k)=
\left\{\begin{array}{ll}
i-j & \text{if} \;\;k=i\\
1 & \text{if} \;\;i<k\leq j\\
0 & \text{otherwise.}
\end{array}\right.
\end{displaymath}

\noindent{\bf Case 2} If $j<i$
\begin{displaymath}
k-c_{ij}(k)=
\left\{\begin{array}{ll}
i-j & \text{if}\;\; k=i\\
-1 & \text{if} \;\;j\leq k <i\\
0 & \text{otherwise.}
\end{array}\right.
\end{displaymath}

\noindent It follows that

\begin{eqnarray*}
\left|\nabla f(\sigma)\right|^2
&\leq&\frac{16}{n^2(n-1)^2}\sum_{i<j}\sum_{k=0}^{n-1}\left|k-c_{ij}(k)\right|^2
+\sum_{j<i}\sum_{k=0}^{n-1}\left|k-c_{ij}(k)\right|^2\\
&=&\frac{16}{n^2(n-1)^2}\sum_{i<j}\left((i-j)^2+(j-i)\right)+\sum_{j<i}\left((i-j)^2+(i-j)\right)\\
&=&\frac{16}{n^2(n-1)^2}\sum_{i,j=0}^{n-1}(i-j)^2+|i-j|\\
&\leq &\frac{32}{n^2(n-1)^2}\sum_{i,j=0}^{n-1}(i-j)^2\leq 32.
\end{eqnarray*}

\noindent Lemma $4$ of \cite{Wil} along with the estimate above imply
the stated lower bound in total variation.

\end{proof}

\section{Random walks driven by conjugacy classes.}
\subsection{Review of some discrete time results}
In section \ref{sec-RandomTrans} we considered the random walk on $S_n$ driven
by the conjugacy class of transpositions.  More generally,
one can study random walks driven by a fixed conjugacy class.
Recall that $\mathcal{C}$ is a conjugacy class of a group $G$ if
for some $x\in G$ we have that $\mathcal{C}=\{gxg^{-1}:\forall g\in G\}$.

Throughout this section $\mathcal{C}$ will refer to a
conjugacy class in $S_n$ and $\text{supp}(\mathcal{C})$
will denote the support size of $\mathcal{C}$, that is, the number
of points that are not fixed under the action of an element in $\mathcal{C}$.
Conjugacy classes of the symmetric group $S_n$ are described by the cycle structure of
their elements which is often given by a tuple of non-increasing integers
greater than or equal to $2$ and with sum at most $n$. For instance, in
$S_n$ with $n\geq 8$, the tuple $(4,2,2)$ describes the conjugacy class $\mathcal{C}$ 
of those  permutations that are the product of two transpositions
and one $4$-cycle, all  with disjoint supports.  In this example, 
$\text{supp}(\mathcal{C})=8$.

If $\mathcal{C}$ consists of odd permutations, that is, permutations
which can be written as a product of an odd number of transpositions, then
$\mathcal{C}$ generates $S_n$. If $\mathcal{C}$ is even, that is, any element in 
$\mathcal{C}$ can be written as the product of an even number
of transpositions and $\mathcal{C}\neq\{e\}$ then it generates the alternating group $A_n$.
Set $q_{\mathcal{C}}$ to be the measure
\begin{equation}\label{meas-Kcycle}
q_{\mathcal{C}}(\sigma)=\left\{\begin{array}{ll}
\frac{1}{\#\mathcal{C}} & \textrm{if $\sigma\in\mathcal{C}$}\\
0 &\textrm{otherwise}
\end{array}\right.
\end{equation}
where $\# \mathcal{C}$ denotes the number of elements in $\mathcal{C}$.
When $\mathcal{C}$ is an odd conjugacy class the random walk
driven by $q_{\mathcal{C}}$ is be periodic and
$q^t_{\mathcal{C}}$ is supported on $A_n$ when
$t$ is even, and  on $S_n\backslash A_n$ otherwise.
In this case, it is convenient to study the
random walk on $A_n$ driven by $q_{\mathcal{C}}^2$ to avoid periodicity.

The mixing time of these random walks was studied in \cite{LP,MSP,Roi,SP},
among other works. See the discussion in \cite{S-K}.
For simplicity, we describe some of the known results in the case of even conjugacy classes. The same results hold in the odd case, modulo periodicity. In \cite{SP} it is shown that any sequence $(A_n,q_{\mathcal{C}_n})$
has a total variation cutoff at time
\begin{equation*}
t_1(n)=\inf\{k:q^k_{\mathcal{C}_n}(\varphi)\leq \log{n}\}
\end{equation*}
where $\varphi(\sigma)$ is the number of fixed points of $\sigma\in S_n$ and
$q^k_{\mathcal{C}}(\varphi)$ is the expected value of $\varphi$ taken according to
the measure $q^k_{\mathcal{C}}$.  It is well known , see \cite{Dia, Sa}, that
\begin{equation*}
\varphi(\cdot)-1=\chi_{(n-1,1)}(\cdot)=n-1-\text{supp}(\cdot).
\end{equation*}
This implies that $\varphi-1$  is an eigenfunction of $q_{\mathcal{C}}$ with eigenvalue
$\left(\frac{\chi_{(n-1,1)}(\mathcal{C})}{n-1}\right)$.
Thus we can rewrite $t_1(n)$ as
\begin{equation*}
t_1(n)=\inf
\left\{
k:(n-1)\left(1-\frac{\text{supp}(\mathcal{C}_n)}{n-1}\right)^k+1\leq \log{n}
\right\}.
\end{equation*}
When $\text{supp}(\mathcal{C}_n)$ is not too large 
(e.g., $\text{supp}(\mathcal{C}_n)/n=o(1)$)
then $t_1(n)\sim(n/\text{supp}(\mathcal{C}_n))\log{n}$
and when $\text{supp}(\mathcal{C}_n)$ is very large then $t_1(n)$ is $\mathcal{O}(1)$.

Assuming that $\text{supp}(\mathcal{C}_n)\leq n-1$,
\cite{MSP} shows that the random walk driven by
$q_{\mathcal{C}_n}$ has an $\ell^2$ pre-cutoff at time $t_2(n)$ where
\begin{equation*}
\left|t_2(n)-\frac{2\log{n}}{\log(n/(n-\text{supp}(\mathcal{C}_n)+1))}\right|\leq 3.
\end{equation*}
As in the total variation
case, when $\text{supp}(\mathcal{C}_n)$ is not too large then
\begin{equation*}
t_2(n)\sim(n/\text{supp}(\mathcal{C}_n))\log{n}
\end{equation*}
and when
$\text{supp}(\mathcal{C}_n)$ is large we get the at $t_2(n)$ is $\mathcal{O}(1)$.
Here, we will focus on the continuous time process
associated to $q_{\mathcal{C}_n}$.

Corollary 4.1 in \cite{GY} implies that the continuous time
process driven by $q_{\mathcal{C}_n}$  has a total variation mixing time
bounded above by that of the discrete time process. Arguments similar to those
in Chapter 4 of \cite{Ro} give a lower bound for the continuous time process
that is comparable to the upper bound just mentioned.
In particular, for $\text{supp}(\mathcal{C}_n)\leq(n-1)/(\log(n-1)+1)$, these
arguments show that the continuous time chain associated to $q_{\mathcal{C}_n}$
has a total variation cutoff at time $t_1(n)$.

Perhaps surprisingly, we show below that, in $\ell^2$, the mixing time of the
continuous time process has a lower bound of $(n/2)\log{n}$ for any conjugacy
class with $\text{supp}(\mathcal{C}_n)\geq 2$.  A matching upper bound is shown
when $\text{supp}(\mathcal{C}_n)\ra\infty$ as $n\ra\infty$ as well as for the
conjugacy class of $4$-cycles.

\subsection{$\ell^2$ lower bounds in continuous time}\label{sec-lower}

Through out this section $\mathcal C_n$ is a conjugacy class in $S_n$
(or $A_n$)  and $c_n\in \mathcal C_n$ is an arbitrary fixed element in $\mathcal C_n$. 
Recall that $\mbox{supp}(\mathcal C_n)$ is $n-\varphi(c_n)$
where $\varphi(\cdot)$ is the number of fixed points.

\begin{theo} \label{k-cont} For each $n$, set 
$$t_n=\frac{n}{2}\log{n}.$$
For any odd conjugacy class $\mathcal{C}_n\subset S_n$ with $\text{supp}(\mathcal{C}_n)\geq 2$, 
and any $\epsilon\in(0,1)$
\begin{equation*}
\lim_{n\ra\infty} d_2(h_{\mathcal{C}_n,(1-\epsilon)t_n},u_n)=\infty.
\end{equation*}
\end{theo}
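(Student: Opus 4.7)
The plan is to combine Proposition \ref{dist-character}, which reads
\begin{equation*}
d_2(h_{\mathcal{C}_n,t},u_n)^2\;=\;\sum_{\rho\ne 1}d_\rho^2\exp\bigl(-2t(1-r(\rho))\bigr),\qquad r(\rho)=\chi_\rho(c_n)/d_\rho,
\end{equation*}
with the column orthogonality of characters, in order to lower bound the right-hand side by the contribution of an overwhelming number of irreducible representations with $|r(\rho)|$ small. The heuristic is exactly the one pointed out in the introduction: the Plancherel mass $\sum_\rho d_\rho^2=n!$ is enormous, and the factor $e^{-2t(1+\delta)}\sim n^{-(1-\epsilon)(1+\delta)n}$ cannot kill it at $t=(1-\epsilon)(n/2)\log n$ once $\delta$ is chosen strictly below $\epsilon/(1-\epsilon)$.

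Column orthogonality at $c_n$ gives $\sum_\rho d_\rho^2\,r(\rho)^2=n!/|\mathcal{C}_n|$. Since $\text{supp}(\mathcal{C}_n)\ge 2$ forces $|\mathcal{C}_n|\to\infty$ (routine: for support $k\le n-1$, $|\mathcal{C}_n|\ge\binom{n}{k}\ge n$, while for $k=n$ a Stirling estimate on the centralizer of a derangement in $S_n$ does the job), the right-hand side is $o(n!)$. For any $\delta\in(0,1)$, Chebyshev then gives
\begin{equation*}
\sum_{\rho:\,|r(\rho)|\ge\delta}d_\rho^2\;\le\;\frac{n!}{\delta^2|\mathcal{C}_n|},
\end{equation*}
and subtracting from $\sum_\rho d_\rho^2=n!$ (the trivial representation, which satisfies $|r(1)|=1\ge\delta$, is automatically excluded from what follows) leaves
\begin{equation*}
\sum_{\rho\ne 1:\,|r(\rho)|<\delta}d_\rho^2\;\ge\;n!\left(1-\frac{1}{\delta^2|\mathcal{C}_n|}\right).
\end{equation*}
For every $\rho$ in this last sum, $1-r(\rho)\le 1+\delta$, so $e^{-2t(1-r(\rho))}\ge e^{-2t(1+\delta)}$, and keeping only these terms in the spectral sum yields
\begin{equation*}
d_2(h_{\mathcal{C}_n,t},u_n)^2\;\ge\;n!\left(1-\frac{1}{\delta^2|\mathcal{C}_n|}\right)e^{-2t(1+\delta)}.
\end{equation*}

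Substituting $t=(1-\epsilon)(n/2)\log n$ gives $e^{-2t(1+\delta)}=n^{-(1-\epsilon)(1+\delta)n}$, and Stirling supplies $\log n!=n\log n-n+O(\log n)$. Choosing $\delta=\epsilon/2$ makes $1-(1-\epsilon)(1+\delta)=\epsilon(1+\epsilon)/2>0$, so the logarithm of the lower bound is at least $\tfrac{1}{2}\epsilon(1+\epsilon)\,n\log n-n+o(n)\to+\infty$, which is the desired conclusion. The hard part is really the soft side estimate $|\mathcal{C}_n|\to\infty$ (on which the whole argument rests, through the factor $1-1/(\delta^2|\mathcal{C}_n|)$ having to stay close to $1$); beyond this, the argument uses no delicate character information at all, only orthogonality and the trivial bound $|r(\rho)|\le 1$, in agreement with the intuition that continuous-time mixing here is driven by the sheer number of small eigenvalues rather than by any individual eigenvalue in particular.
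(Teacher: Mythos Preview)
Your proof is correct and takes a genuinely different route from the paper's own argument.

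The paper proceeds by exhibiting a single explicit irreducible representation---one whose Young diagram is nearly a $\sqrt{n}\times\sqrt{n}$ square---and then invokes two nontrivial external results to control its normalized character: Roichman's uniform character bound (for $\text{supp}(\mathcal{C}_n)\ge\sqrt{n}$) and the Rattan--\'Sniady bound (Theorem~\ref{theo-RS}, for smaller support). The hook-length formula gives $d_{\lambda_n}\ge(\sqrt{n}/4e)^n$, and the character bounds give $|r(\lambda_n)|=o(1)$; one term of the spectral sum then already diverges.

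Your argument bypasses all of this. You never need to identify any particular representation or invoke any deep character estimate: column orthogonality $\sum_\rho d_\rho^2 r(\rho)^2=n!/|\mathcal{C}_n|$ together with the Plancherel identity $\sum_\rho d_\rho^2=n!$ and a Chebyshev step shows that, in Plancherel mass, almost all of $n!$ sits on representations with $|r(\rho)|<\delta$. The rest is Stirling. This is strictly more elementary than the paper's proof (indeed the paper explicitly remarks, after Theorem~\ref{k-cont-odd}, that its argument ``relies on the character estimates of \cite{RS} and \cite{Roi}'' and offers a Murnaghan--Nakayama workaround that only covers special $k$). Your proof also makes no use of the parity of $\mathcal{C}_n$, so it handles Theorems~\ref{k-cont} and~\ref{k-cont-odd} simultaneously. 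What the paper's approach buys in exchange is a concrete witness: it pins the blowup on a specific near-square diagram, which is informative structurally and connects to the broader theme (balanced diagrams as the bottleneck) even if it is not needed for the bare statement.

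One small point worth tightening: the claim $|\mathcal{C}_n|\to\infty$ when $\text{supp}(\mathcal{C}_n)=n$ deserves a line. A clean way is to note that a fixed-point-free $\sigma$ has at most $n/2$ cycles, and for cycle type with $\ell$ cycles one has $|C_{S_n}(\sigma)|=\prod_i i^{m_i}m_i!\le (n/\ell)^\ell\,\ell!$; Stirling then gives $n!/|C_{S_n}(\sigma)|\ge (n/e)^{n-\ell}\ge (n/e)^{n/2}\to\infty$.
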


In order to understand the mixing time of these continuous time processes
we will again rely on (\ref{main=}) and we will need to estimate
the dimensions and characters of some of the irreducible representations of $S_n$. 
The following  well known definitions and results will help us understand these 
quantities.

\begin{defin}
Let $\lambda$ be a Young diagram with $n$ boxes, as usual, we denote this by 
$\lambda\vdash n$.
The hook at the cell $(i,j)$ is defined as the set of boxes $H_{i,j}$ where
\begin{equation*}
H_{i,j}=\{(i,l):(i,l)\in\lambda,l\geq j\}\cup\{(k,j):(k,j)\in\lambda,k\geq i\}.
\end{equation*}
$H_{i,j}$ has hook length $h_{i,j}=|H_{i,j}|$.
\end{defin}

\begin{theo}{\bf (The Hook formula)}
Let $\lambda$ be a Young diagram with $n$ boxes. Set $d_{\lambda}$ to be
the dimension of the irreducible representation associated to $\lambda$. Then
\begin{eqnarray}\label{hook}
d_{\lambda}=\frac{n!}{\prod_{(i,j)\in\lambda}h_{i,j}}
\end{eqnarray}
\end{theo}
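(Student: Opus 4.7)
The plan is to prove the Hook formula via the probabilistic argument of Greene, Nijenhuis and Wilf, which is both elegant and in keeping with the probabilistic flavor of the paper. First I would invoke the standard representation-theoretic identity $d_\lambda = f^\lambda$, where $f^\lambda$ denotes the number of standard Young tableaux of shape $\lambda$; this is a well-known consequence of the Specht module construction (or of the RSK correspondence together with $\sum_\lambda d_\lambda^2=n!$). This reduces the statement to the purely combinatorial identity
\begin{equation*}
f^\lambda = \frac{n!}{\prod_{(i,j)\in\lambda} h_{i,j}}.
\end{equation*}

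I would then proceed by induction on $n=|\lambda|$. The base case $n=1$ is immediate. For the inductive step, use the elementary recursion obtained by removing the cell containing the entry $n$ in a standard Young tableau:
\begin{equation*}
f^\lambda = \sum_{(a,b)\text{ a corner of }\lambda} f^{\lambda\setminus(a,b)}.
\end{equation*}
Applying the inductive hypothesis to each term and comparing hook lengths in $\lambda$ versus $\lambda\setminus(a,b)$ — noting that $h_{i,j}$ decreases by exactly $1$ when $(i,j)$ lies strictly left of $(a,b)$ in row $a$ or strictly above $(a,b)$ in column $b$, is equal to $1$ at $(a,b)$, and is unchanged otherwise — the identity to verify becomes
\begin{equation*}
1 = \sum_{(a,b)\text{ corner}} \frac{1}{n}\,\prod_{j<b}\frac{h_{a,j}}{h_{a,j}-1}\,\prod_{i<a}\frac{h_{i,b}}{h_{i,b}-1}.
\end{equation*}

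The key step is to interpret the right-hand side probabilistically. Define the \emph{hook walk} on $\lambda$: choose a starting cell uniformly at random among the $n$ cells of $\lambda$ (contributing the factor $1/n$), and from a current cell $(i,j)$ move to a cell chosen uniformly at random in $H_{i,j}\setminus\{(i,j)\}$; terminate when a corner is reached. Since $|H_{i,j}|$ strictly decreases along the walk, the process terminates almost surely, so the termination probabilities over corners sum to $1$. The central claim is that the probability of terminating at a given corner $(a,b)$ equals the corresponding summand above, which would complete the induction.

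The main obstacle is establishing that explicit product formula for the termination probability. The argument decomposes the walk into its row-coordinate and column-coordinate projections, conditions on the set of rows $\{a=i_1>\cdots>i_s\}$ and columns $\{b=j_1>\cdots>j_t\}$ visited, and then sums over all admissible such sets. The sum telescopes via a careful inclusion-exclusion identity of the form $\prod_k(x_k/(x_k-1))=\sum_{S}\prod_{k\in S}1/(x_k-1)$, collapsing the probabilities into the stated product over $h_{a,j}/(h_{a,j}-1)$ and $h_{i,b}/(h_{i,b}-1)$. This combinatorial collapse is the technical heart of the Greene-Nijenhuis-Wilf proof; once it is in hand, the induction closes and the hook formula follows.
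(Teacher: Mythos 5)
The paper itself offers no proof of this statement: the hook length formula is quoted as classical background material (the standing reference for this combinatorics is \cite{Sa}), so there is no internal argument to measure yours against. Your proposed route is one of the standard proofs: reduce $d_\lambda$ to the number $f^\lambda$ of standard Young tableaux, induct on $n$ via the branching rule $f^\lambda=\sum_{(a,b)\ \mathrm{corner}}f^{\lambda\setminus(a,b)}$, and verify the resulting corner identity
\begin{equation*}
1=\sum_{(a,b)\ \mathrm{corner}}\frac{1}{n}\prod_{j<b}\frac{h_{a,j}}{h_{a,j}-1}\prod_{i<a}\frac{h_{i,b}}{h_{i,b}-1}
\end{equation*}
by the Greene--Nijenhuis--Wilf hook walk. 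Your bookkeeping of how hook lengths change when a corner is deleted is correct, and the reduction to the displayed identity is sound.

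The genuine gap is exactly the point you label ``the main obstacle'': the assertion that the hook walk terminates at the corner $(a,b)$ with probability $\frac{1}{n}\prod_{j<b}\frac{h_{a,j}}{h_{a,j}-1}\prod_{i<a}\frac{h_{i,b}}{h_{i,b}-1}$ carries the entire content of the induction step, and you only gesture at its proof. What is needed is the GNW lemma: for fixed sets $I$ of rows and $J$ of columns with $\max I=a$, $\max J=b$, the walk started at $(\min I,\min J)$ visits exactly the rows $I$ and columns $J$ (and hence ends at $(a,b)$) with probability $\prod_{i\in I\setminus\{a\}}\frac{1}{h_{i,b}-1}\prod_{j\in J\setminus\{b\}}\frac{1}{h_{a,j}-1}$. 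This is proved by induction on $|I|+|J|$: conditioning on the first step gives the recursion $p(I,J)=\frac{1}{h_{i_1,j_1}-1}\bigl[p(I\setminus\{i_1\},J)+p(I,J\setminus\{j_1\})\bigr]$ with $i_1=\min I$, $j_1=\min J$, and the induction closes because $(a,b)$ being a corner forces the identity $h_{i_1,j_1}-1=(h_{i_1,b}-1)+(h_{a,j_1}-1)$. Only after this lemma does your expansion $\prod_k\bigl(1+\frac{1}{x_k-1}\bigr)=\sum_{S}\prod_{k\in S}\frac{1}{x_k-1}$, summed over starting cells and over the subsets $I,J$, collapse to the claimed termination probability. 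As written, the ``combinatorial collapse'' is invoked rather than established, so the proposal is an accurate outline of a valid proof rather than a complete one; filling in the lemma above (or simply citing Greene--Nijenhuis--Wilf, or using the Frame--Robinson--Thrall derivation from the Frobenius determinant formula as in \cite{Sa}) would finish it.
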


\noindent With the hook formula we can now get an estimate on the dimension of
some representations of $S_n$.

\begin{lem}\label{lem-dim}
Let $n\in\mathbb{N}$ and  $\lambda\vdash n$ be a Young diagram. If $\lambda$ fits into
a rectangle of $s\times t$ boxes, then
\begin{equation*}
d_{\lambda}\geq \left(\frac{n}{e(s+t-1)}\right)^n.
\end{equation*}
\end{lem}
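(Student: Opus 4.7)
The plan is to apply the hook length formula directly and bound every hook length by the maximal possible hook length allowed by the bounding rectangle.

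First I would observe that if $\lambda$ fits inside an $s \times t$ rectangle, then for every cell $(i,j) \in \lambda$ the hook $H_{i,j}$ is contained in the union of the $i$-th row of $\lambda$ (restricted to columns $\geq j$) and the $j$-th column of $\lambda$ (restricted to rows $\geq i$). The row part has length at most $t - j + 1 \leq t$ and the column part has length at most $s - i + 1 \leq s$, with the cell $(i,j)$ itself counted in both. Hence
\[
h_{i,j} \leq s + t - 1
\]
for every cell of $\lambda$.

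Next I would plug this uniform bound into the hook formula (\ref{hook}). Since $\lambda$ has exactly $n$ cells,
\[
\prod_{(i,j) \in \lambda} h_{i,j} \leq (s + t - 1)^n,
\]
and therefore
\[
d_\lambda = \frac{n!}{\prod_{(i,j)\in\lambda} h_{i,j}} \geq \frac{n!}{(s+t-1)^n}.
\]

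Finally I would apply the elementary Stirling bound $n! \geq (n/e)^n$ (which follows from the lower bound in (\ref{Sterling}), dropping the factor $\sqrt{2\pi n} \geq 1$) to conclude
\[
d_\lambda \geq \frac{(n/e)^n}{(s+t-1)^n} = \left(\frac{n}{e(s+t-1)}\right)^n,
\]
as desired. There is no real obstacle here: the only nontrivial input is the bound $h_{i,j} \leq s+t-1$, which is immediate from the definition of the hook once $\lambda$ is contained in the $s \times t$ rectangle, and everything else is a one-line application of the hook formula together with Stirling.
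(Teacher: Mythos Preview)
Your proof is correct and follows exactly the same approach as the paper: bound every hook length by $s+t-1$, apply the hook formula (\ref{hook}), and finish with the Stirling lower bound from (\ref{Sterling}). The paper's proof is simply a more terse version of what you wrote.
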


\begin{proof}
Note that any hook in $\lambda$ will be of hook length at most $s+t-1$.
The inequality then follows from Stirling's formula in (\ref{Sterling}) and
hook formula (\ref{hook}).
\end{proof}

We will use the following rather non-trivial bound on character ratios. 
\begin{theo}{\bf (\cite{RS})} \label{theo-RS}
Let $a>0$ be a fixed constant, and let  $\lambda\vdash n$
be a Young diagram with at most $a\sqrt{n}$ rows and columns.  Then there
exists a constant $D=D(a)$ such that
\begin{equation*}
\left|\frac{\chi_{\lambda}(\sigma)}{d_{\rho}}\right|
\leq\left(\frac{D\max\{1,|\sigma|^2/n\}}{\sqrt{n}}\right)^{|\sigma|}
\end{equation*}
\noindent for any  $\sigma\in S_n$ and where $|\sigma|$ is the minimal number of transpositions needed to write $\sigma$ as a product of transpositions.
\end{theo}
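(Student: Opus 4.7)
The plan is to prove the character-ratio bound via the Murnaghan--Nakayama (MN) rule combined with the hook length formula (\ref{hook}), exploiting the key feature of the hypothesis that every hook of $\lambda$ has length at most $2a\sqrt{n}-1$.

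First I would reduce to the cycle type of $\sigma$: say $\sigma$ has nontrivial cycles of lengths $\mu_1,\dots,\mu_r$ with $\mu_i\geq 2$, and set $k=\sum_i\mu_i=\mathrm{supp}(\sigma)$, so that $|\sigma|=k-r$. Iterating MN over the nontrivial cycles gives
$$\chi_\lambda(\sigma)=\sum_{(S_1,\dots,S_r)}\varepsilon(S_1,\dots,S_r)\,d_{\lambda\setminus\bigsqcup_i S_i},$$
where the sum runs over admissible ordered sequences of border strips $S_i$ of length $\mu_i$ peeled off in turn and $\varepsilon=\prod_i(-1)^{\mathrm{ht}(S_i)}$. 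Dividing by $d_\lambda$ and applying the hook formula to both $d_\lambda$ and $d_{\lambda\setminus\bigsqcup_iS_i}$ yields
$$\left|\frac{\chi_\lambda(\sigma)}{d_\lambda}\right|\leq \frac{(n-k)!}{n!}\sum_{(S_1,\dots,S_r)}\prod_{c\in\lambda\setminus\bigsqcup_i S_i}\frac{h_\lambda(c)}{h_{\lambda\setminus\bigsqcup_iS_i}(c)}.$$

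Next I would bound the right-hand side using the rectangle constraint. Two estimates enter: (i) the number of border strips of length $\mu$ in a diagram with at most $a\sqrt{n}$ rows and columns is $O(a\sqrt{n})$, since each strip is determined by its starting border corner; and (ii) removing a strip of length $\mu$ alters at most $O(\mu)$ hook lengths, each bounded by $2a\sqrt{n}$, so that strip contributes a hook ratio of at most $(2a\sqrt{n})^{O(\mu)}$. Combined with $(n-k)!/n!\leq n^{-k}$, one gets a bound of the rough form $(C_a\sqrt{n})^{O(k)}/n^{k}$, which in the moderate regime $k\leq\sqrt{n}$ simplifies to $(D/\sqrt{n})^{k-r}=(D/\sqrt{n})^{|\sigma|}$, matching the theorem when $|\sigma|^2\leq n$.

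The hard part is sharpening this naive accounting to obtain the precise form $(D\max\{1,|\sigma|^2/n\}/\sqrt{n})^{|\sigma|}$. The $\max\{1,|\sigma|^2/n\}$ correction becomes relevant exactly when $|\sigma|\gtrsim\sqrt{n}$: in that regime the border strips must occupy a large fraction of $\lambda$, they can no longer be placed or counted independently, and the global bound $|\lambda|=n$ must be used together with the bound on rows plus columns to avoid overcounting admissible MN peelings. Tracking these overlap constraints carefully, and controlling the potentially large alternating cancellations among the MN contributions in the regime where the naive bound would break, is the delicate combinatorial heart of the argument in \cite{RS}.
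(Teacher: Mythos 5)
This theorem is not proved in the paper at all: it is quoted from Rattan and \'Sniady \cite{RS} and used as a black box, so there is no internal proof to compare against, and your proposal must stand on its own. It does not. The final paragraph of your sketch explicitly defers ``the delicate combinatorial heart of the argument'' --- the regime $|\sigma|$ of order $\sqrt{n}$ and beyond, the precise factor $\max\{1,|\sigma|^2/n\}$, and above all the control of the alternating signs in the Murnaghan--Nakayama expansion --- to \cite{RS} itself. That deferred part is precisely the content of the theorem, so what remains is a heuristic in the easy regime, not a proof.

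Even the easy-regime accounting has unjustified steps. Removing a border strip of length $\mu$ changes the hook lengths of every cell lying in a row or column met by the strip; since the strip can meet up to $\mu$ rows and $\mu$ columns each containing up to $a\sqrt{n}$ cells, the number of altered hooks can be of order $\mu\sqrt{n}$, not $O(\mu)$, so the claimed per-strip factor $(2a\sqrt{n})^{O(\mu)}$ does not follow as stated (one needs the known cancellation identities for hook ratios under strip removal, which you do not invoke). Moreover, a bound of the shape $(C_a\sqrt{n})^{O(k)}n^{-k}$ with an unspecified constant in the exponent does not ``simplify'' to $(D/\sqrt{n})^{k-r}$: whether the net power of $\sqrt{n}$ is negative depends exactly on that constant, and the passage from the exponent $k=\mathrm{supp}(\sigma)$ to $|\sigma|=k-r$ is another point where care is required and none is given. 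Finally, note that the actual argument of \cite{RS} does not proceed by iterated Murnaghan--Nakayama peeling at all (direct peeling founders on the cancellation problem you name); it is based on a generalized Frobenius formula for normalized characters, with the hypothesis that $\lambda$ fits in an $a\sqrt{n}\times a\sqrt{n}$ box entering through estimates on the terms of that formula. So the route you sketch is both different from the source and, as presented, incomplete at its decisive step.
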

Recall that, for any $\sigma\in S_n$ which is not the identity
then $|\sigma|\leq\text{supp}(\sigma)$.

\begin{proof}[Proof of Theorem \ref{k-cont}.]
The idea behind this proof is to write the desired $\ell^2$ distance
as in equation (\ref{main=}) and find an irreducible
representation which has large dimension
and small character.  The representations that are useful in this respect
turn out to be those that have an approximately square shape.

Let $\lambda_n\vdash n$ be a Young diagram that fits into a box of side
$\lceil\sqrt{n}\rceil$, so that $\lambda_n$ looks almost like a square.
By Lemma \ref{lem-dim} and the fact that $\lceil\sqrt{n}\rceil\leq 2\sqrt{n}$
we get that
\begin{equation}\label{eq-square-dim}
d_{\lambda_n}\geq\left(\frac{n}{2e\lceil\sqrt{n}\rceil}\right)^n
\geq\left(\frac{\sqrt{n}}{4e}\right)^n.
\end{equation}

If $\mathcal{C}_n$ is a conjugacy class with $\text{supp}(\mathcal{C}_n)\geq \sqrt{n}$ and $c_n\in \mathcal C_n$,
\cite[Theorem 1]{Roi} yields  a positive constant $q<1$ such that
\begin{equation*}
\left|\frac{\chi_{\lambda_n}(c_n)}{d_{\lambda_n}}\right|
\leq q^{\text{supp}(\mathcal{C}_n)}\leq q^{\sqrt{n}}.
\end{equation*}
If $2\leq\text{supp}(\mathcal{C}_n)\leq\sqrt{n}$, Theorem \ref{theo-RS}  implies that
\begin{equation*}
\left|\frac{\chi_{\lambda_n}(c_n)}{d_{\lambda_n}}\right|\leq\frac{D}{\sqrt{n}}.
\end{equation*}
In either case, we have that
\begin{equation*}
\left|\frac{\chi_{\lambda_n}(c_n)}{d_{\lambda_n}}\right|=o(1).
\end{equation*}

Using (\ref{eq-square-dim}), we obtain that, for any $\epsilon >0$,
\begin{eqnarray*}
\lefteqn{d_{\lambda_n}^2\left\{
-\left(1-\epsilon\right)n\log{n}
\left(1-\frac{\chi_{\lambda_n}(c_n)}{d_{\lambda_n}}\right)
\right\}
\geq}\hspace{1in}  &&  \\
&&\left(\frac{n}{16e^2}\right)^n\exp\left\{
-\left(1-\epsilon\right)n\log{n}\left(1+o(1)\right)
\right\}
\end{eqnarray*}
It now follows from (\ref{main=})
\begin{eqnarray*}
\lim_{n\ra\infty}d_2(h_{\mathcal{C}_n,(1-\epsilon)t_n},u_n)
\geq \lim_{n\ra\infty}d_{\lambda_n}
\exp\left\{-(1-\epsilon)t_n
\left(1-\frac{\chi_{\lambda_n}(c_n)}{d_{\lambda_n}}\right)\right\}
=\infty
\end{eqnarray*}
as desired.
\end{proof}

\noindent Using the same ideas as in the proof of Theorem \ref{k-cont} we
get the following result.

\begin{theo} \label{k-cont-odd}
Let $\mathcal{C}_n$ be a conjugacy class in $A_n$ with $\text{supp}(\mathcal{C}_n)\geq 2$,
and set $\overline{u}_n$ to be the uniform measure on $A_n$.  For any
$\epsilon\in(0,1)$ and $t_n=\frac{n}{2}\log{n}$
\begin{equation*}
\lim_{n\ra\infty} d_2(h_{\mathcal C_n,(1-\epsilon)t_n},\overline{u}_n)=\infty.
\end{equation*}
\end{theo}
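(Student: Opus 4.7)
The plan is to mirror the proof of Theorem \ref{k-cont}, but working with irreducible representations of $A_n$ instead of $S_n$. The general scheme is unchanged: apply (\ref{main=}) with $G = A_n$, and exhibit a single irreducible representation $\rho$ of $A_n$ for which
$$d_\rho^2 \exp\bigl\{-2(1-\epsilon)t_n\bigl(1-\chi_\rho(c_n)/d_\rho\bigr)\bigr\}$$
already tends to infinity. The only real new ingredient is the passage from $S_n$-representations to $A_n$-representations, which is governed by the standard branching rule.

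First, I would pick an almost-square Young diagram $\lambda_n \vdash n$ fitting into a box of side $\lceil\sqrt n\rceil$, exactly as in the proof of Theorem \ref{k-cont}, so that by Lemma \ref{lem-dim} we have $d_{\lambda_n} \geq (\sqrt n/(4e))^n$. Recall that if $\lambda_n \neq \lambda_n^T$ then $\rho_{\lambda_n}\big|_{A_n}$ is irreducible of dimension $d_{\lambda_n}$, whereas if $\lambda_n = \lambda_n^T$ then $\rho_{\lambda_n}\big|_{A_n}$ splits into two irreducibles $\rho_{\lambda_n}^\pm$, each of dimension $d_{\lambda_n}/2$. Taking $\rho = \rho_{\lambda_n}\big|_{A_n}$ or $\rho = \rho_{\lambda_n}^+$ accordingly, I get $d_\rho \geq (\sqrt n/(8e))^n$, still of the correct order.

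The main obstacle is controlling $\chi_\rho(c_n)/d_\rho$ in the split case. When $\lambda_n \neq \lambda_n^T$, the character on $A_n$ is inherited from $S_n$ and the bound $\chi_{\rho_{\lambda_n}}(c_n)/d_{\lambda_n} = o(1)$ from the proof of Theorem \ref{k-cont} (combining Roichman's estimate when $\mathrm{supp}(\mathcal{C}_n) \geq \sqrt n$ with Theorem \ref{theo-RS} otherwise) applies directly. When $\lambda_n = \lambda_n^T$, the character formula gives $\chi_{\rho_{\lambda_n}^\pm}(c_n) = \tfrac12\chi_{\rho_{\lambda_n}}(c_n) + \delta(c_n)$, where $\delta(c_n) = 0$ unless the cycle type of $c_n$ consists of distinct odd positive integers $c_1,\dots,c_k$, in which case $|\delta(c_n)| \leq \tfrac12\sqrt{c_1\cdots c_k}$. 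Since distinct odd positive integers summing to at most $n$ force $k^2 \leq n$, one has $\sqrt{c_1\cdots c_k} \leq n^{\sqrt n/2}$, which is negligible compared with $d_{\lambda_n}$. Hence $|\chi_\rho(c_n)/d_\rho| = o(1)$ in every case.

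With these two ingredients in hand---$d_\rho \geq (\sqrt n/(8e))^n$ and $|\chi_\rho(c_n)/d_\rho| = o(1)$---the final estimate of Theorem \ref{k-cont} carries over verbatim: the chosen term in (\ref{main=}) is bounded below by $n^{\epsilon n(1+o(1))}$, which tends to infinity as $n \to \infty$.
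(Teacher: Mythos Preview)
Your argument is correct and in fact fills in more detail than the paper supplies: the paper simply states that the result follows ``using the same ideas as in the proof of Theorem~\ref{k-cont}'' without spelling out the passage from $S_n$-representations to $A_n$-representations. Your treatment of the branching rule, including the split case $\lambda_n=\lambda_n^T$ and the bound $|\delta(c_n)|\le \tfrac12\sqrt{c_1\cdots c_k}\le \tfrac12 n^{\sqrt n/2}=o(d_{\lambda_n})$, is accurate. One small remark: the correction term $\delta(c_n)$ is actually nonzero only when the cycle type of $c_n$ \emph{equals} the sequence of principal hook lengths of $\lambda_n$, a stronger condition than the one you state; but since you only use an upper bound this does no harm.

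That said, there is a shorter route that is likely what the paper has in mind and that you may prefer: simply choose the almost-square diagram $\lambda_n$ to be \emph{non}-self-conjugate. This is always possible---for any $n$ one can fit a partition of $n$ into a $(\lceil\sqrt n\rceil+1)\times\lceil\sqrt n\rceil$ box that is not equal to its transpose (e.g.\ stack boxes row by row and, if the result happens to be self-conjugate, move one box). Then $\rho_{\lambda_n}\big|_{A_n}$ is already irreducible with $d_\rho=d_{\lambda_n}$ and $\chi_\rho(c_n)=\chi_{\lambda_n}(c_n)$, so the dimension and character estimates from Theorem~\ref{k-cont} carry over verbatim with no split-case analysis at all. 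Your approach has the virtue of working uniformly without fiddling with the choice of $\lambda_n$; the alternative buys brevity.
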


\begin{rem}
For $\epsilon>0$, it is interesting to consider the discrete time chain driven by
\begin{equation}\label{q-lazy}
\tilde{q}_{{\mathcal{C}_n},\epsilon}(\sigma)=\left\{
\begin{array}{ll}
\epsilon& \text{if $\sigma$=e}\\
\frac{1-\epsilon}{ \# \mathcal{C}_n}& \text{if $\sigma\in\mathcal{C}_n$}\\
0 & \text{otherwise.}
\end{array}\right .
\end{equation}
When $\epsilon=1/2$, this is often called the lazy chain associated to $q_{\mathcal{C}_n}$.
The arguments used in the proof of Theorem \ref{k-cont} show that the random
walk driven by $\tilde{q}_{\mathcal{C}_n,\epsilon}$ will have a $\ell^2$ mixing time
lower bound of $(n/2)\log_{1/\epsilon}(n)$.

In \cite{S-K} it is conjectured (Conjecture 9.3) that
both the total variation mixing time and the $\ell^2$
mixing time of the random walk driven by
$\tilde{q}_{\mathcal{C}_n}$ will have an upper bound of
$(2n/\text{supp}(\mathcal{C}_n))\log{n}$.
While this is true for in the case of total variation, the results
above show that the bound does not hold for $\ell^2$.
\end{rem}

The proof of Theorem \ref{k-cont} relies on the character
estimates of \cite{RS} and \cite{Roi}.  While these
estimates are very useful, one can construct simple
Young diagram and use the Murnaghan-Nakayama Rule below to get estimates
on the values of  characters at a $k$-cycles, for infinitely many $k$.
This gives a much more accessible proof of a weaker version of 
Theorems \ref{k-cont} and \ref{k-cont-odd}. For further details on the following 
definitions see Section 4.10 in \cite{Sa}.

\begin{defin} A skew hook $\xi$ in a Young diagram is a collection of boxes that
result from the projection of a regular hook along the right boundary of a 
Young diagram.

The leg length of a skew hook $\xi$ is denote by $ll(\xi)$ with
$$ll(\xi)=\text{the number of rows of $\xi$} -1 .$$
\end{defin}

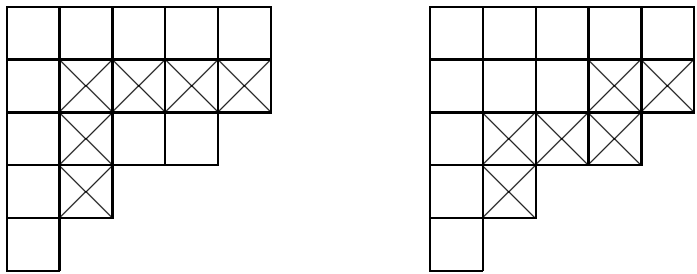
\begin{figure}[ht]

\begin{center}
\caption{A hook and its corresponding skew hook of leg length 2.} \vspace{.05in}
\end{center}

\begin{picture}(300,100)(-40,15)

\put(0,120){\line(1,0){100}}
\put(0,100){\line(1,0){100}}
\put(0,80){\line(1,0){100}}
\put(0,60){\line(1,0){80}}
\put(0,40){\line(1,0){40}}
\put(0,20){\line(1,0){20}}
\put(0,120){\line(0,-1){100}}
\put(20,120){\line(0,-1){100}}
\put(40,120){\line(0,-1){80}}
\put(60,120){\line(0,-1){60}}
\put(80,120){\line(0,-1){60}}
\put(100,120){\line(0,-1){40}}
\put(20,40){\line(1,1){20}}
\put(60,80){\line(1,1){20}}
\put(20,60){\line(1,1){20}}
\put(80,80){\line(1,1){20}}
\put(20,60){\line(1,-1){20}}
\put(20,100){\line(1,-1){20}}
\put(40,100){\line(1,-1){20}}
\put(40,100){\line(-1,-1){20}}
\put(60,100){\line(-1,-1){20}}
\put(60,100){\line(1,-1){20}}
\put(80,100){\line(1,-1){20}}
\put(20,80){\line(1,-1){20}}

\put(160,120){\line(1,0){100}}
\put(160,100){\line(1,0){100}}
\put(160,80){\line(1,0){100}}
\put(160,60){\line(1,0){80}}
\put(160,40){\line(1,0){40}}
\put(160,20){\line(1,0){20}}
\put(160,120){\line(0,-1){100}}
\put(180,120){\line(0,-1){100}}
\put(200,120){\line(0,-1){80}}
\put(220,120){\line(0,-1){60}}
\put(240,120){\line(0,-1){60}}
\put(260,120){\line(0,-1){40}}
\put(180,40){\line(1,1){60}}
\put(180,60){\line(1,1){20}}
\put(220,60){\line(1,1){40}}
\put(180,60){\line(1,-1){20}}
\put(200,80){\line(1,-1){20}}
\put(220,80){\line(1,-1){20}}
\put(220,100){\line(1,-1){20}}
\put(240,100){\line(1,-1){20}}
\put(180,80){\line(1,-1){20}}
\end{picture}

\end{figure}

\begin{theo}{\bf (Murnaghan-Nakayama Rule)}\label{thm-MN}
If $\lambda$ is a partition of $n$ and $\alpha\in S_n$ such that
$\alpha$ has cycle type $(\alpha_1,\alpha_2,\dots,\alpha_i)$, then we
have
\begin{eqnarray}\label{MN-rule}
\chi_{\lambda}(\alpha)=\sum_{\xi}(-1)^{ll(\xi)}\chi_{\lambda \backslash \xi}
(\alpha\backslash\alpha_1)
\end{eqnarray}
where the sum runs over all skew hooks $\xi$ of $\lambda$ having $\alpha_1$ cells and
$\chi_{\lambda\backslash\xi}(\alpha\backslash\alpha_1)$ denotes that character of the
representation $\lambda\backslash\xi$ evaluated at an element of cycle type 
$\alpha\backslash\alpha_1$.
\end{theo}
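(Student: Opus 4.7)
The plan is to prove the Murnaghan-Nakayama rule via the Frobenius characteristic map and symmetric function identities, which is the cleanest classical route. Via Frobenius, the irreducible characters of $S_n$ are encoded in the expansion of the power sum symmetric function
$$p_\alpha = \sum_{\lambda \vdash n} \chi_{\lambda}(\alpha)\, s_\lambda,$$
where $p_\alpha = p_{\alpha_1} p_{\alpha_2}\cdots p_{\alpha_i}$ and $s_\lambda$ is the Schur function. The idea is to split $p_\alpha = p_{\alpha_1} \cdot p_{\alpha \setminus \alpha_1}$, substitute the expansion $p_{\alpha\setminus\alpha_1} = \sum_\mu \chi_\mu(\alpha\setminus\alpha_1) s_\mu$, and then extract the coefficient of $s_\lambda$ on both sides, so that the rule reduces to understanding the product $p_{\alpha_1} \cdot s_\mu$.

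The technical heart of the proof is thus a Pieri-type identity
$$p_k \cdot s_\mu = \sum_{\xi}(-1)^{ll(\xi)}\, s_{\mu \cup \xi},$$
where the sum runs over border strips (skew hooks) $\xi$ of size $k$ that may be attached to $\mu$ to yield a valid partition. I would prove this identity using the Jacobi-Trudi determinantal formula
$$s_\lambda = \det\bigl(h_{\lambda_i - i + j}\bigr)_{1 \le i, j \le \ell(\lambda)}$$
together with the Newton-type recurrence relating $p_k$ to the complete homogeneous symmetric functions $h_m$. Expanding $p_k \cdot \det(h_{\mu_i - i + j})$, distributing $p_k h_m = h_{m+k} + (\text{correction})$ row by row, and collecting the resulting determinants produces a telescoping cancellation; the surviving terms correspond exactly to adding a border strip of size $k$, with the sign $(-1)^{\text{(number of row jumps)}}$ matching $(-1)^{ll(\xi)}$. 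This is the step where the main obstacle lies, since the combinatorial identification of row-swap signs with leg lengths of skew hooks requires careful bookkeeping.

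Once the Pieri-type identity is established, the proof of (\ref{MN-rule}) follows by induction on the number of cycles $i$ of $\alpha$. The base case $i=1$ says $\chi_\lambda(\alpha_1) = \sum_\xi (-1)^{ll(\xi)}$ where $\xi$ is a border strip of size $n$ filling all of $\lambda$, which follows by matching the coefficient of $s_\lambda$ in $p_n = \sum_\mu \chi_\mu(\alpha_1) s_\mu$. For the inductive step, writing
$$p_\alpha \;=\; p_{\alpha_1}\Bigl(\sum_{\mu \vdash n-\alpha_1}\chi_\mu(\alpha\setminus\alpha_1) s_\mu\Bigr)\;=\; \sum_{\mu}\chi_\mu(\alpha\setminus\alpha_1)\sum_{\xi}(-1)^{ll(\xi)} s_{\mu\cup\xi}$$
and equating the coefficient of $s_\lambda$ with $\chi_\lambda(\alpha)$ gives the sum over skew hooks $\xi \subset \lambda$ of size $\alpha_1$ with inner partition $\mu = \lambda\setminus\xi$, which is exactly the right-hand side of (\ref{MN-rule}). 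Since the Pieri identity and the Frobenius expansion are standard material in \cite{Sa}, and the only nontrivial ingredient is the sign bookkeeping in the Jacobi-Trudi calculation, the argument is essentially complete modulo that one combinatorial lemma.
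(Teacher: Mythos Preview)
The paper does not actually prove Theorem~\ref{thm-MN}; the Murnaghan--Nakayama rule is stated there as a classical result and the reader is referred to Sagan \cite{Sa} for details. So there is no ``paper's own proof'' to compare against: your proposal is not competing with anything in the text, it is simply supplying a proof the authors chose to quote from the literature.

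That said, your outline is one of the standard routes and is essentially sound. The reduction via the Frobenius characteristic to the Pieri-type identity
\[
p_k\, s_\mu=\sum_{\xi}(-1)^{ll(\xi)}\, s_{\mu\cup\xi}
\]
is exactly right, and from there the induction on the number of cycles is immediate, as you describe. One technical caution: the step you sketch via Jacobi--Trudi, writing ``$p_k h_m = h_{m+k}+(\text{correction})$ row by row,'' is not accurate as stated, since $p_k h_m$ has no such simple closed form. The clean proof of the $p_k\, s_\mu$ identity instead uses the bialternant expression $s_\mu=\det(x_j^{\mu_i+N-i})/\det(x_j^{N-i})$: multiplying the numerator by $p_k=\sum_j x_j^k$ distributes over columns, shifts one exponent by $k$, and the row permutation needed to restore strictly decreasing exponents produces exactly the sign $(-1)^{ll(\xi)}$. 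If you replace your Jacobi--Trudi paragraph with this bialternant argument (which is also in \cite{Sa}), the proof is complete and rigorous.
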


It is important to remark that when using the Murnaghan-Nakayama rule, if it is impossible
to remove a skew hook of the right size then the part of the sum corresponding to that
skew hook is zero. A good source for more information on the Murnaghan-Nakayama rule and
skew hooks is \cite{Sa}.

\begin{lem}\label{lem-MN-char}
For $m\in\mathbb{N}$ set $n=m(m+1)/2$.
Let $\lambda=(\lambda_1,\lambda_2,\dots,\lambda_m)\vdash n$ be a triangular Young
diagram such that $\lambda_i=m-i+1$.
Let $c_k$ be a cycle of length $k$.
If $k=4i+1$ for $i=1,2,\dots$ then
$\chi_{\lambda}(c_k)>0$. If $k$ is even then $\chi_{\lambda}(c_k)=0$.
\end{lem}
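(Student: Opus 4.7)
The plan is a one-step application of the Murnaghan--Nakayama rule (Theorem~\ref{thm-MN}). Since $c_k$ has cycle type $(k,1^{n-k})$, that rule collapses to
$$
\chi_\lambda(c_k) \;=\; \sum_{\xi} (-1)^{ll(\xi)}\, d_{\lambda \setminus \xi},
$$
where the sum runs over all skew hooks $\xi$ of $\lambda$ of size exactly $k$, and I have used $\chi_\mu(1^{|\mu|}) = d_\mu$. So the whole problem reduces to describing the skew hooks of each size in the staircase $\lambda$ and pinning down their leg lengths.

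The key structural fact I would establish first is that every hook length of $\lambda$ is odd. A direct computation using $\lambda_i = m - i + 1$ and (by symmetry of the staircase) $\lambda'_j = m - j + 1$ gives arm $\lambda_i - j = m - i - j + 1$ and leg $\lambda'_j - i = m - i - j + 1$, hence
$$
h_{i,j} \;=\; 2(m - i - j + 1) + 1.
$$
Skew hooks of size $k$ are in natural bijection with cells of hook length $k$: this is implicit in the paper's definition, because a skew hook is the rim projection of a classical hook and the two have the same size and the same number of rows. For the staircase no cell has even hook length, so no skew hook of even size exists, the Murnaghan--Nakayama sum is empty, and $\chi_\lambda(c_k) = 0$ for every even $k$. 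This disposes of the even case.

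For odd $k$, the same bijection identifies skew hooks of size $k$ with the cells of $\lambda$ on the anti-diagonal $i + j = m + 1 - (k-1)/2$. The number of rows of such a skew hook equals $1 + (\lambda'_j - i)$, which on the staircase simplifies to $1 + (k-1)/2$, \emph{independently} of which cell on the anti-diagonal we chose. Hence every skew hook of size $k$ in $\lambda$ has leg length exactly $(k-1)/2$. For $k = 4i + 1$ this is $2i$, so $(-1)^{ll(\xi)}$ is uniformly $+1$; provided at least one skew hook of size $k$ fits in $\lambda$ (i.e.\ $k \le 2m - 1$, the range in which the lemma is applied), $\chi_\lambda(c_k)$ is a sum of strictly positive dimensions $d_{\lambda \setminus \xi}$ and is therefore strictly positive. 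The only non-routine ingredient in the whole argument is this uniform leg-length identity for skew hooks of the staircase; it is what forces the signs in the Murnaghan--Nakayama sum to line up, and everything else is arithmetic or a direct invocation of tools already introduced in the paper.
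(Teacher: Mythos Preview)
Your proof is correct and follows exactly the same route as the paper: a single application of Murnaghan--Nakayama, the observation that every hook length in the staircase is odd (so no skew hook of even size exists), and the fact that every skew hook of odd size $k$ in the staircase has leg length $(k-1)/2$, forcing all signs to be $+1$ when $k\equiv 1\pmod 4$. Your write-up is more explicit than the paper's terse argument---you actually compute $h_{i,j}=2(m-i-j+1)+1$ and the uniform leg length---and your caveat about needing $k\le 2m-1$ is a genuine (if implicit) hypothesis that the paper's statement glosses over.
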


\begin{proof}
The  Murnaghan-Nakayama rule implies that
\begin{equation*}
\chi_{\lambda}(c_k)
=\sum_{|\xi|=k}(-1)^{ll(\xi)}d_{\lambda\backslash\xi}.
\end{equation*}

Any hook in $\lambda$ composed of
must have even leg length by construction so it
follows that $\chi_{\lambda}(c_k)>0$.
The second part of the proof follows directly from the
Murnaghan-Nakayama rule and the fact that
every hook in $\lambda$ will have odd hook length, making it impossible to
remove a skewhook of even length.
\end{proof}

Using the dimension and character estimates from (\ref{eq-square-dim})
and Lemma \ref{lem-MN-char}, one can replicate the ideas in the
proof of Theorem \ref{k-cont}. If $\mathbf c_k$ denotes here the conjugacy 
class of cycles of length $k$,  for any $\epsilon>0$ and
$t_n=\frac{n}{2}\log{n}$, we have
\begin{itemize}
\item[(1)] if $k_n$ is even then
\begin{equation*}
\lim_{n\ra\infty}d_2(h_{\mathbf c_{k_n},(1-\epsilon)t_n},u_n)=\infty
\end{equation*}
\item[(2)]if $k_n$ is odd and $k_n=4i_n+1$ for $i_n=1,2,3,\dots$ then
\begin{equation*}
\lim_{n\ra\infty}d_2(h_{\mathbf c_{k_n},(1-\epsilon)t_n},\overline{u}_n)=\infty.
\end{equation*}
\end{itemize}

\subsection{Total variation upper bounds in continuous time}

As we mentioned at the beginning of this section, the
mixing time of the continuous time process $h_{\mathcal{C}_n,t_n}$
will depend on whether one considers the total variation or the $\ell^2$ distance.
In this section we derive a total variation upper bound of type
$(n/\text{supp}(\mathcal{C}_n))\log{n}$ for the continuous time
process associated to $q_{\mathcal{C}_n}$.  In the next section, we shall show
that the $\ell^2$ mixing time has an upper bound of $(n/2)\log{n}$ for
the continuous time process when $\text{supp}(\mathcal{C}_n)\ra\infty$.

\begin{pro}
Let $\mathcal{C}_n$ be an even conjugacy class and
$\overline{u}_n$ to be the uniform measure on $A_n$.
Let $T_n$ be the total variation cutoff
time of $q_{\mathcal{C}_n}$ (in discrete time) and assume that $T_n\ra\infty$.
Then, for any $\epsilon>0$,
\begin{equation*}
\lim_{n\ra \infty}d_{\mbox{\tiny \em TV}}(h_{\mathcal{C}_n,(1+\epsilon)T_n},\overline{u}_n)=0
\end{equation*}
\end{pro}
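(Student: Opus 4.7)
\medskip
\noindent\textbf{Proof proposal.}
The plan is to realize the continuous time process as a Poisson mixture of the discrete time process and to transfer the discrete time cutoff, using concentration of the Poisson distribution together with the monotonicity of total variation distance along the Markov semigroup.

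Write $t_n=(1+\epsilon)T_n$ and pick an intermediate threshold $k_n=\lceil (1+\epsilon/2)T_n\rceil$. By the definition of the continuous time process, if $N_{t_n}$ denotes a Poisson random variable of parameter $t_n$, then
\begin{equation*}
h_{\mathcal C_n,t_n}(\sigma)=\sum_{k=0}^{\infty}\mathbb P(N_{t_n}=k)\,q_{\mathcal C_n}^{(k)}(\sigma).
\end{equation*}
By convexity of total variation distance (i.e., the triangle inequality applied to this mixture) and the trivial bound $\|q^{(k)}-\overline u_n\|_{\mbox{\tiny TV}}\le 1$,
\begin{equation*}
\|h_{\mathcal C_n,t_n}-\overline u_n\|_{\mbox{\tiny TV}}
\le \mathbb P(N_{t_n}<k_n)\;+\;\sup_{k\ge k_n}\|q_{\mathcal C_n}^{(k)}-\overline u_n\|_{\mbox{\tiny TV}}.
\end{equation*}

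For the first term, since $\mathbb E[N_{t_n}]=t_n=(1+\epsilon)T_n$ and $\mathrm{Var}(N_{t_n})=t_n$, Chebyshev's inequality applied to the deviation $t_n-k_n\ge (\epsilon/2)T_n-1$ yields
\begin{equation*}
\mathbb P(N_{t_n}<k_n)\le \frac{t_n}{((\epsilon/2)T_n-1)^2}=O(1/T_n),
\end{equation*}
which tends to $0$ because $T_n\to\infty$.  For the second term, I would use the fact that for any Markov kernel $P$ with stationary measure $\overline u_n$ one has $\|\mu P-\overline u_n P\|_{\mbox{\tiny TV}}\le\|\mu-\overline u_n\|_{\mbox{\tiny TV}}$, so that $k\mapsto \|q_{\mathcal C_n}^{(k)}-\overline u_n\|_{\mbox{\tiny TV}}$ is non-increasing.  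Hence
\begin{equation*}
\sup_{k\ge k_n}\|q_{\mathcal C_n}^{(k)}-\overline u_n\|_{\mbox{\tiny TV}}
=\|q_{\mathcal C_n}^{(k_n)}-\overline u_n\|_{\mbox{\tiny TV}},
\end{equation*}
and since $k_n=(1+\epsilon/2)T_n(1+o(1))$, the discrete time total variation cutoff at $T_n$ gives that this quantity tends to $0$.

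Combining the two estimates yields $\|h_{\mathcal C_n,(1+\epsilon)T_n}-\overline u_n\|_{\mbox{\tiny TV}}\to 0$, as desired. The main conceptual ingredient is the monotonicity of total variation distance along the Markov semigroup, which allows us to transfer a cutoff statement at a single scale to a uniform bound for all larger times; the main technical ingredient is the (elementary) concentration of $N_{t_n}$ around its mean $t_n$, which separates it from $k_n$ by a margin of order $T_n\to\infty$. I do not anticipate any real obstacle: the argument is essentially the standard ``Poissonization'' comparison between discrete and continuous time total variation mixing times, used in the direction that is known to be straightforward (continuous time is not slower than discrete time in total variation).
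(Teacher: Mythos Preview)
Your argument is correct and self-contained. The paper's proof reaches the same conclusion by quoting a general comparison result (Corollary 4.1 in \cite{GY}) which says, roughly, that the continuous time mixing time $T^c_{n,\eta}$ is asymptotically at most the discrete time mixing time $T^d_{n,\eta/2}$; it then combines this with the discrete cutoff from \cite{SP}. What you have written is, in effect, a direct proof of the special case of that cited comparison result that is needed here: the Poisson mixture representation, splitting at $k_n=(1+\epsilon/2)T_n$, Chebyshev for the lower tail, and monotonicity of $k\mapsto\|q^{(k)}-\overline u_n\|_{\mbox{\tiny TV}}$ are exactly the ingredients underlying the cited corollary. Interestingly, the paper itself uses the same hands-on style of argument in the Remark immediately following the proposition (for the lazy chain, with a binomial in place of a Poisson), so your approach is entirely in the spirit of the surrounding text. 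The advantage of your version is that it is self-contained and makes the mechanism transparent; the paper's version has the advantage of pointing to a general principle that applies beyond this particular family.
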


\begin{proof}
Let
\begin{eqnarray*}
T^d_{n,\epsilon}
&=&\inf\{t\geq 0:d_{\text{\tiny TV}}(q_{\mathcal{C}_n}^{(t)},\overline{u}_n)\leq\epsilon\}\\
T^c_{n,\epsilon}
&=&\inf\{t\geq 0:d_{\text{\tiny{TV}}}(h_{\mathcal{C}_n,t},\overline{u}_n)\leq\epsilon\}.
\end{eqnarray*}
Corollary 4.1 in \cite{GY} shows that for any $\delta\in(0,1)$, $\epsilon>0$ and
$\eta\in(0,\epsilon)$ there exists an integer $N=N(\delta,\eta)$ such
that
\begin{equation*}
(1-\delta)T^c_{n,\epsilon}\leq T^d_{n,\eta}\;\;\text{for all $n\geq N$.}
\end{equation*}
In particular, for any $\epsilon>0$ we can find a $\delta\in(0,1)$ and an $N_1=N_1(\delta,\eta)$
such that for all $n\geq N$
\begin{equation*}
T^c_{n,\eta}\leq\sqrt{1+\epsilon}\;\;T^d_{n,\eta/2}.
\end{equation*}
From \cite{SP} we know that the random walk driven by $q_{\mathcal{C}_n}$ has cutoff, hence
for any $\epsilon>0$ and $\eta\geq 0$ there exists an $N_2=N_2(\epsilon,\eta)$ such that
for all $n\geq N_2$
\begin{equation*}
T_{n,\eta/2}^d\leq\sqrt{1+\epsilon}\;\; T_n.
\end{equation*}
Combining the inequalities above gives that for any $\epsilon>0$ and $\eta>0$ there exists an
$N=\max\{N_1,N_2\}$ such that for all $n\geq N$
\begin{equation*}
T_{n,\eta}^c\leq(1+\epsilon)T_n.
\end{equation*}
The desired result follows.

\end{proof}

\begin{rem}
In the case of the lazy random walk $\tilde{q}_{\mathcal{C}_n,1/2}$
defined in (\ref{q-lazy}), one can show that the total variation mixing time
is bounded by approximately twice that of the discrete time process $q_{\mathcal{C}_n}$.
(This is a more general phenomenon.)
We only treat the case when $\mathcal{C}_n$ is an even conjugacy class.
Note that
\begin{equation*}
d_{\mbox{\tiny TV}}(\tilde{q}_{\mathcal{C}_n}^{(t_n)},\overline{u}_n)
=\sum_{k=0}^t2^{-t_n}{ t_n\choose k}d_{\mbox{\tiny TV}}(q_{\mathcal{C}_n}^{(k)},\overline{u}_n).
\end{equation*}
For any constant $D>0$ set $\mathcal I_n=[0,t_n/2-D\sqrt{t_n}]\cup[t_n/2+D\sqrt{t_n},t_n]$.
Then we have that
\begin{equation*}
\sum_{k\in A_n}2^{-t_n}{t_n\choose k}d_{\mbox{\tiny TV}}(q_{\mathcal{C}_n}^{(k)},\overline{u}_n)
\leq \sum_{k\in A_n}2^{-t_n}{t_n\choose k}.
\end{equation*}
By the central limit theorem the right hand side tends to $0$ as $D$ tends to $\infty$.

Outside of the set $\mathcal I_n$ we get that
\begin{eqnarray*}
\sum_{k\notin A_n}2^{-t_n}{t_n\choose k}d_{\mbox{\tiny TV}}(q_{\mathcal{C}_n}^{(k)},\overline{u}_n)
&\leq& d_{\mbox{\tiny TV}}(q_{\mathcal{C}_n}^{(t_n/2-D\sqrt{t_n})},\overline{u}_n)
\sum_{k\notin A_n}2^{-t_n}{ t_n\choose k }\\
&\leq& d_{\mbox{\tiny TV}}(q_{\mathcal{C}_n}^{(t_n/2-D\sqrt{t_n})},\overline{u}_n).
\end{eqnarray*}

The arguments above shows that the
cutoff  time of the lazy walk is asymptotically $2T_n$.
A similar argument would show that for any $\epsilon>0$ the walk driven by
$\tilde{q}_{\mathcal{C}_n,\epsilon}$ has a cutoff  time asymptotically equal to 
$(1/(1-\epsilon))T_n$.
\end{rem}

\subsection{Continuous time $\ell^2$ upper bounds: $\mbox{supp}(\mathcal C_n)\ra\infty$}

Section \ref{sec-lower} shows that the $\ell^2$ mixing time
of $h_{\mathcal{C}_n,t}$ must be at least $(n/2)\log{n}$ for
all non trivial conjugacy classes.  We show that when $\text{supp}(\mathcal{C})$
goes to $\infty$ as $n\ra\infty$ and for the conjugacy class of 4-cycles
the continuous time random walk has an $\ell^2$ cutoff at $(n/2)\log{n}$.

\begin{theo}\label{clm-kcycle-grows}
Let $\mathcal{C}_n$ be a conjugacy class such that
$\text{\em supp}(\mathcal{C}_n)\ra\infty$ as $n\ra\infty$.
For any $\epsilon>0$,  and $t_n=(n/2)\log{n}$
\begin{itemize}
\item[(1)]
$\lim_{n\ra\infty}d_2\left(h_{\mathcal{C}_n,(1+\epsilon)t_n},u_n\right)=0$
if $\mathcal{C}_n$ is odd.
\item[(2)]
$\lim_{n\ra\infty}d_2\left(h_{\mathcal{C}_n,(1+\epsilon)t_n},\overline{u}_n\right)=0$
if $\mathcal{C}_n$ is even.
\end{itemize}
\end{theo}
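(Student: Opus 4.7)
The strategy is to start from the character-theoretic formula (\ref{main=}) and estimate the resulting sum over non-trivial irreducible representations by splitting the Young diagrams $\lambda\vdash n$, $\lambda\neq(n)$, into those close to a corner and those of more balanced shape. This parallels the lower-bound argument of Theorem \ref{k-cont}, but now I must control the entire sum rather than isolate one well-chosen term. In the even case ($\mathcal{C}_n\subset A_n$) I would first reduce to $S_n$-representations: each $A_n$-irrep is either obtained by restricting a non-self-conjugate pair $\{\rho_\lambda,\rho_{\lambda'}\}$ (dimension $d_\lambda$, character ratio equal to $r_\lambda$) or is one of two pieces from splitting a self-conjugate $\rho_\lambda$ (dimension $d_\lambda/2$, with character ratios bounded by $|r_\lambda|$ up to an easily handled correction on split classes). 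Up to an absolute constant this yields
\[
d_2(h_{\mathcal{C}_n,t},\overline{u}_n)^{2} \;\le\; 2\!\!\sum_{\lambda\vdash n,\;\lambda\neq(n)}\!\! d_\lambda^{\,2}\,\exp\!\bigl(-2t(1-|r_\lambda|)\bigr),
\]
which is precisely (\ref{main=}) in the odd case, with $u_n$ in place of $\overline{u}_n$ and without the factor of $2$.

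Fix a small $\eta\in(0,1/2)$ and split the sum according to $j_\lambda := \min(n-\lambda_1,\,n-\lambda_1')$. In the balanced range $j_\lambda > \eta n$, Roichman's theorem \cite[Theorem 1]{Roi} (as used in the proof of Theorem \ref{k-cont}) yields $|r_\lambda|\le q^{\,s_n}$ for some $q=q(\eta)<1$, where $s_n := \mathrm{supp}(\mathcal{C}_n)$. Since $s_n\to\infty$ by hypothesis, we have $1-|r_\lambda|\ge 1/2$ uniformly once $n$ is large; combining this with $\sum_\lambda d_\lambda^{\,2}=n!$ and Stirling's formula bounds this portion of the sum by $n!\,\exp(-(1+\epsilon)t_n)\le n^{-\epsilon n}$, which tends to zero.

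For the corner range $1\le j_\lambda\le\eta n$, after exchanging $\lambda\leftrightarrow\lambda'$ if necessary I may assume $\lambda_1 = n-j$ with $1\le j\le\eta n$. The crucial ingredient is a lower bound of the form
\[
1-|r_\lambda| \;\ge\; \frac{C\, s_n\, j}{n}
\]
for an absolute constant $C>0$. For $j=1$ this is exact: since $\chi_{(n-1,1)}(\sigma)=\varphi(\sigma)-1$, one has $1-|r_\lambda|\ge s_n/(2n)$ uniformly over conjugacy classes (distinguishing the cases $s_n\le n-1$ and $s_n>n-1$). For $j\ge 2$ the bound can be obtained by iterating the Murnaghan--Nakayama rule on $\lambda=(n-j,\mu)$, expanding $\chi_\lambda(c_n)$ as a signed sum over skew-hook removals and controlling each term by $d_{\lambda\setminus\xi}$ together with hook-length ratios. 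Using (\ref{dim}), the contribution from this range is at most
\[
\sum_{j=1}^{\eta n}\frac{n^{2j}}{j!}\,\exp\!\bigl(-(1+\epsilon)C s_n j\log n\bigr)
\;\le\;\sum_{j\ge1}\frac{1}{j!}\bigl(n^{\,2-(1+\epsilon)C s_n}\bigr)^{j},
\]
and the hypothesis $s_n\to\infty$ drives the base below any inverse power of $n$, so the entire sum vanishes. The main obstacle is the uniform character inequality for $j\ge2$: the Murnaghan--Nakayama expansion involves signed sums over skew-hook removals, and one must rule out cancellations that would destroy the $C s_n j/n$ gain. The delicate case is that of classes consisting of many short cycles rather than few long ones; fortunately, the assumption $s_n\to\infty$ provides enough numerical slack to absorb any modest constant loss in $C$, so the estimate above closes the argument.
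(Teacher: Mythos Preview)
There is a genuine numerical gap in your treatment of the balanced range. From $1-|r_\lambda|\ge 1/2$ you bound that portion of the sum by $n!\,\exp(-(1+\epsilon)t_n)$ and assert this is at most $n^{-\epsilon n}$. But $t_n=(n/2)\log n$, so by Stirling $n!\,\exp(-(1+\epsilon)t_n)$ is of order $n^{(1-\epsilon)n/2}e^{-n}$, which for $0<\epsilon<1$ tends to $+\infty$, not to zero. The remedy is to use the full force of Roichman's estimate rather than discarding it for the crude bound $1/2$: since $|r_\lambda|\le q^{s_n}\to 0$, for any $\delta>0$ one eventually has $1-|r_\lambda|\ge 1-\delta$, and choosing $\delta$ with $(1+\epsilon)(1-\delta)>1$ gives $n!\,\exp\bigl(-2(1+\epsilon)(1-\delta)t_n\bigr)\to 0$. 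Your corner-range inequality $1-|r_\lambda|\ge Cs_nj/n$ is also not established: as a literal statement with an absolute $C>0$ it is false once $s_nj/n$ is large (the left side never exceeds $2$), and the correct form $1-|r_\lambda|\ge c\min(1,s_nj/n)$ is essentially Roichman's theorem. Your Murnaghan--Nakayama sketch does not supply a proof of it, and the ``numerical slack'' afforded by $s_n\to\infty$ only helps \emph{after} such a character bound is in hand, not in deriving it.

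The paper takes a completely different and much shorter route that sidesteps both issues. It splits the eigenvalues $\beta_i$ of $q_{\mathcal{C}_n}$ at the threshold $1/w$ with $w=(1+\epsilon)/\epsilon$, uses the elementary inequality $(1-x)^2\ge e^{-wx}$ (valid for $0\le x\le 1-1/w$, $w\ge 4$) to convert each continuous-time factor $e^{-2(1+\epsilon)t_n(1-\beta_i)}$ with $\beta_i\ge 1/w$ into $\beta_i^{4\epsilon t_n}$, bounds the remaining small-$\beta$ contribution crudely by $n!\,e^{-n\log n}\to 0$, and recognises the large-$\beta$ sum as (twice) the discrete-time quantity $d_2(q_{\mathcal{C}_n}^{(2\epsilon t_n)},\overline{u}_n)^2$. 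The hypothesis $s_n\to\infty$ then enters only through the already-proved discrete-time upper bound of \cite{MSP}, which makes that quantity vanish once $2\epsilon t_n$ exceeds a fixed multiple of $(n/s_n)\log n$. No new character estimates are required.
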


\begin{proof} Let $\mathcal{C}_n$ be an odd conjugacy class.
Set $(\beta_i)_0^{n!-1}$ to  be the eigenvalues associated to the
measure $q_{\mathcal{C}_n}$ and $\lambda_i=1-\beta_i$. From
(\ref{dist-spect}) we know that
\begin{eqnarray*}
d_2(h_{\mathcal{C}_n,(1+\epsilon)t_n},u_n)^2
&=&\sum_{i=1}^{n!-1}e^{-2(1+\epsilon)t_n\lambda_i}\\
&=&\sum_{\lambda_i\leq 1-1/w}e^{-2t_n(1+\epsilon)\lambda_i}
 +\sum_{\lambda_i\geq 1-1/w}e^{-2t_n(1+\epsilon)\lambda_i}.
\end{eqnarray*}
We will use the following Calculus inequality.
\begin{claim}\label{eq-ineq-w}
For $w\geq 4$ and $0\leq x\leq 1-1/w$ we have that $2\log(1-x)\geq
-wx$.
\end{claim}

For $1/3\geq \epsilon>0$, $w=(1+\epsilon)/\epsilon\geq 4$, so by
the claim above and (\ref{dist-spect}) we obtain
\begin{eqnarray*}
d_2(h_{\mathcal{C}_n,(1+\epsilon)t_n},u_n)^2
&\leq& \sum_{1/w\leq \beta_i} \beta_i^{\epsilon 4t_n}+n!e^{-2t_n(1-1/w)(1+\epsilon)}\\
&=&\sum_{1/w\leq \beta_i} \beta_i^{\epsilon 4t_n}+n!e^{-n\log{n}}
\end{eqnarray*}

We know  that
the eigenvalues of $q_{\mathcal{C}_n}$ are just the normalized
characters $\chi_{\rho}(c_n)/d_{\rho}$, $c_n\in \mathcal C_n$,
that occur with multiplicity $d_{\rho}^2$.
Let $\rho_1$ and $\rho_2$ be the trivial and sign representations
respectively.  When $\mathcal{C}_n$ is odd $\chi_{\rho_2}(c_n)/d_{\rho_2}=-1$,
so the character associated to the sign representation does not contribute to the
sum of eigenvalues above.
Furthermore, (see, e.g., \cite[Lemma 2]{MSP})
\begin{equation}\label{dist-k-discrete}
d_2(q_{\mathcal{C}_n}^{(2t)},\overline{u}_n)^2
=\frac{1}{2}\sum_{\rho\neq\rho_1,\rho_2}
d_{\rho}^2\left(\frac{\chi_\rho(c_n)}{d_{\rho}}\right)^{4t}.
\end{equation}
It now follows that
\begin{eqnarray*}
d_2(h_{\mathcal{C}_n,(1+\epsilon)t_n},u_n)^2
&\leq&\sum_{\rho\neq\rho_1,\rho_2}
d_{\rho}^2\left(\frac{\chi_{\rho}(c_n)}{d_{\rho}}\right)^{\epsilon 4t_n}
+n!e^{-n\log{n}}\\
&\leq&2d_2(q^{(\epsilon 2t_n)}_{\mathcal{C}_n},\overline{u}_n)^2+n!e^{-n\log{n}}.
\end{eqnarray*}

In \cite{MSP} it is shown that there exists a fixed constant
$D>0$   such that for $t_n$ even and
$t_n\geq (Dn/\text{supp}(\mathcal{C}_n))\log{n}$ then
$d_2(q_{\mathcal{C}_n}^{(t_n)},\overline{u}_n)\ra 0$ as $n\ra\infty$.
Since $\text{supp}(\mathcal{C}_n)\ra\infty$ as
$n\ra\infty$ then for large enough $n$ we have that
$\epsilon n\log{n}\geq(Dn/\text{supp}(\mathcal{C}_n))\log{n}$ and the desired 
result follows.  The case when $\mathcal{C}_n$ is an even conjugacy class
can be treated in a similar way.
\end{proof}

\begin{rem}
Let $\tilde{q}_{\mathcal{C}_n}$ be the lazy chain  defined in
(\ref{q-lazy}).  In the remark after Theorem \ref{k-cont-odd} 
it is noted that the random walk driven by $\tilde{q}_{\mathcal{C}_n}$ will 
have a $\ell^2$ lower bound on the mixing time of $(n/2)\log_{2}(n)$.  
A matching upper bound for conjugacy classes $\mathcal{C}_n$
such that $\text{supp}(\mathcal{C}_n)\ra\infty$ as $n\ra\infty$ follows 
from an argument similar to the proof of Theorem \ref{clm-kcycle-grows}.
\end{rem}

\subsection{$\ell^2$ continuous time upper bound: $4$-cycles.}
The next theorem gives a sharp $\ell^2$  upper bound for the $4$-cycle walk.
In the case when $\mbox{supp}(\mathcal C_n)\ra \infty$ we relied on the
(rather deep) results of \cite{MSP,SP,Roi} concerning the discrete time case
to obtain a continuous time result matching our lower bound. This technique 
does not work for conjugacy classes with fixed support size. We conjecture
that, with out any restriction on $\mbox{supp}(\mathcal C_n)$, $(n/2)\log n$
is a $\ell^2$ cutoff time for the family  $(h_{\mathcal C_n,t})$. 
Note however that there is no reasons to hope for a proof simpler than
that for random transposition. In discrete time, the only cases with fixed
where support size for which  the $\ell^2$ cutoff time has been determined
are the cases of support size at most $6$ (and the $7$-cycles) treated in \cite{Ro,Ro1}. 
Using the techniques of \cite{Ro,Ro1} one can probably
treat the corresponding continuous time processes, but this will be hard work. Here 
we focus on the $4$-cycle walk. The reason is that
we are able to reduce most technical computations to those already done above for 
transposition. We note that is is unlikely such reduction would work easily for 
$3$-cycles and other even conjugacy classes (see \cite{Ro,Ro1}).

Recall that the conjugacy class of $4$-cycles is denoted by $\mathbf c_4$.
We let $c_4$ be a given $4$-cycle.
\begin{theo}\label{dist-4-cycle}
For $n\geq 11$, $c\geq 2$ and $t\geq(n/2)(\log{n}+c)$
\begin{equation*}
d_2\left(h_{\mathbf c _4,t},u_n\right)\leq e^{-(c-2)}
\end{equation*}
\end{theo}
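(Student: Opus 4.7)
My plan is to apply formula \eqref{main=} to $q = q_{\mathbf c_4}$ and reduce the $\ell^2$ sum for the $4$-cycle walk to the one already bounded in the proof of Proposition \ref{pro-cont-tr}. Explicitly,
\begin{equation*}
d_2(h_{\mathbf c_4,t},u_n)^2 = \sum_{\lambda\neq (n)} d_\lambda^2 \exp\bigl(-2t(1 - \eta_\lambda)\bigr),
\qquad \eta_\lambda := \chi_{\rho_\lambda}(c_4)/d_{\rho_\lambda}.
\end{equation*}
Because a $4$-cycle is odd, the duality $\chi_{\lambda'}(\cdot) = \mathrm{sgn}(\cdot)\,\chi_\lambda(\cdot)$ gives $d_{\lambda'} = d_\lambda$ and $\eta_{\lambda'} = -\eta_\lambda$, so the conjugate-partition pair contributes
\begin{equation*}
d_\lambda^2 \bigl[e^{-2t(1-\eta_\lambda)} + e^{-2t(1-\eta_{\lambda'})}\bigr]
= 2 d_\lambda^2 e^{-2t}\cosh(2t\eta_\lambda)
\leq 2 d_\lambda^2 e^{-2t(1-|\eta_\lambda|)}.
\end{equation*}

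The central lemma I would establish is the character-comparison inequality
\begin{equation*}
|\eta_\lambda| \leq r(\rho_\lambda) \quad \text{for every $\lambda$ with $\lambda_1 \geq n/2$,}
\end{equation*}
where $r(\rho_\lambda) = \chi_\lambda(\tau)/d_\lambda$ is the transposition ratio controlled by \eqref{r}. This can be checked directly on hooks and two-row shapes via Murnaghan-Nakayama and extended by induction peeling off rim hooks from the long first row; the crucial point is that when $\lambda_1 \geq n/2$ every size-$4$ rim hook in $\lambda$ is forced into an almost-horizontal strip, which tightly controls the signed sum in the Murnaghan-Nakayama expansion of $\chi_\lambda(c_4)$. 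Granting this, in each pair $\{\lambda,\lambda'\}$ I pick the representative with $\lambda_1 \geq n/2$ (unique unless the pair is balanced, handled below) and bound the pair's contribution by $2 d_\lambda^2 e^{-2t(1-r(\rho_\lambda))}$. Summing over $j = n - \lambda_1 \in \{1,\ldots,\lfloor n/2\rfloor\}$ with the dimension bound \eqref{dim}, the resulting expression equals, up to a factor of $2$, the quantity bounded in Lemmas \ref{lem-Aj-cont} and \ref{lem-Bj-cont}, and those estimates apply verbatim.

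The remaining \emph{balanced} partitions have both $\lambda_1 < n/2$ and $\lambda_1' < n/2$. When $\max(\lambda_1, \lambda'_1) \leq a\sqrt n$, Theorem \ref{theo-RS} with $|c_4| = 3$ yields $|\eta_\lambda| \leq (D/\sqrt n)^3$, so $1-\eta_\lambda \geq 1 - D^3/n^{3/2}$ and the total contribution is at most $n!\, e^{-2t(1-o(1))}$, which is $O(e^{-n})$ by Stirling. The complementary ``long but not too long'' balanced partitions (with $\sqrt n \ll \lambda_1 < n/2$ but only a few rows, or their transposes) are covered by the same Murnaghan-Nakayama input used for the main lemma, since few rows again restrict the possible size-$4$ rim hooks. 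All these tail contributions are absorbed into the bound $e^{-(c-2)}$.

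The main obstacle is the character-comparison lemma $|\eta_\lambda| \leq r(\rho_\lambda)$ for $\lambda_1 \geq n/2$. Proving it cleanly seems to require a case analysis on the positions of size-$4$ rim hooks in $\lambda$; the base cases are direct, but the inductive step must control the signs coming from nonzero leg lengths and verify that they do not wipe out the gain from the long first row. A secondary subtlety is that the constants accumulated from the factor of $2$ in the $\cosh$ step and from the balanced tail must fit into the target bound $e^{-(c-2)}$, which presumably accounts for the mildly stronger hypothesis $n \geq 11$ (compared to $n \geq 10$ in Proposition \ref{pro-cont-tr}).
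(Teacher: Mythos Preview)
Your overall plan---bound the normalized $4$-cycle character by the same expressions that control the transposition ratio and thereby recycle Lemmas \ref{lem-Aj-cont} and \ref{lem-Bj-cont}---is exactly what the paper does. The divergence is entirely in how the key character inequality is obtained, and there your proposal has a real gap.

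The paper does \emph{not} use Murnaghan--Nakayama or conjugation pairing for this step. Instead it invokes Ingram's explicit formula
\[
\frac{n!}{(n-4)!}\,r_4(\lambda)=M_{4,\lambda}-2(2n-3)M_{2,\lambda},
\qquad
M_{2l,\lambda}=\sum_j\bigl[(\lambda_j-j)^l(\lambda_j-j+1)^l-j^l(j-1)^l\bigr],
\]
and then proves two elementary monotonicity lemmas for the $M_{2l,\lambda}$ (Lemma \ref{eq-M-ineq}: $M_{2l,\lambda}$ increases in dominance order; Lemma \ref{eq-M}: $M_{2l,\lambda}\le n(\lambda_1-1)^l\lambda_1^{l-1}$). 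From these, Lemma \ref{lem-r4} derives exactly the transposition-type bounds $r_4(\lambda)\le 1-\tfrac{2\lambda_1(n-\lambda_1)}{n(n-1)}$ for $\lambda_1\ge n/2$ and $|r_4(\lambda)|\le \tfrac{\lambda_1-1}{n-1}$ for $\lambda_1\le n/2$, valid for \emph{all} $\lambda$. No pairing, no Rattan--\'Sniady, no separate treatment of balanced shapes is needed; the term $\lambda=(n-1,1)$ is singled out only to keep the final numerical constant below $1$.

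Your proposed route to the same inequality is problematic. The structural claim that ``when $\lambda_1\ge n/2$ every size-$4$ rim hook in $\lambda$ is forced into an almost-horizontal strip'' is false: take the hook $\lambda=(n/2,1,\dots,1)$, which has $\lambda_1=n/2$ yet admits a purely vertical size-$4$ rim hook of leg length $3$. So the sign control you are counting on is not available in general, and the inductive scheme ``peeling off rim hooks from the long first row'' is not how the Murnaghan--Nakayama recursion is organized (it removes a single rim hook of size $4$ from the whole shape). Carrying out this approach would require substantially more work than you indicate, whereas the Ingram-formula route is short and algebraic. The conjugation/$\cosh$ device and the appeal to Theorem \ref{theo-RS} for balanced shapes then become unnecessary, which also eliminates the extra factor of $2$ you were worried about.
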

We will use (\ref{main=}) again and bound $\chi_{\rho}(c_4)/d_{\rho}$,
$c_4\in \mathbf c_4$,
with the same upper bounds that we used for
 $\chi_{\rho}(\tau)/d_{\rho}$, $\tau\in \mathbf c_2$,
in the case of  transpositions
in Proposition \ref{pro-cont-tr}.   In order to do this we will need
the following definitions and lemmas

\begin{defin}
If $\lambda'=(\lambda_1',\dots,\lambda_j')$ and
$\lambda=(\lambda_1,\dots,\lambda_k)$ are two Young diagrams such that
$\sum_{i=1}^j\lambda_i'=\sum_{i=1}^k\lambda_i=n$ and it is possible to
get from $\lambda$ to $\lambda'$ by moving boxes up to the right then
we say that $\lambda'\geq\lambda.$
\end{defin}

\begin{defin}
Let $\lambda=(\lambda_1,\lambda_2,\dots,\lambda_m)$ denotes a Young diagram
such that $\sum_{i=1}^m\lambda_i=n$.  For any integer $l\geq 0$
\begin{equation*}
M_{\lambda,2l}
=\sum_{j=1}^m\left\{(\lambda_j-j)^l(\lambda_j-j+1)^l-j^l(j-1)^l\right\}.
\end{equation*}
\end{defin}

\begin{lem}\label{eq-M-ineq}
Let $\lambda'$ and $\lambda$ be two Young diagrams associated to irreducible representations
of $S_n$. If $\lambda'\geq \lambda$ then $M_{\lambda',2l}\geq M_{\lambda,2l}$ for all $l\geq 0$.
\end{lem}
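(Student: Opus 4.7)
My plan is to reduce to a single elementary box move and then verify the resulting one-step inequality by a discrete convexity argument. The case $l=0$ is trivial since $M_{\lambda,0}=0$ for every partition, so I assume $l\geq 1$. By the definition of $\lambda'\geq\lambda$ given just before the lemma, one passes from $\lambda$ to $\lambda'$ by a finite sequence of single-box moves, each taking a box from some row $b$ and placing it in a strictly higher row $a<b$ while preserving the Young-diagram property. By induction on the number of moves, it suffices to prove the inequality when $\lambda'$ differs from $\lambda$ by one such move: $\lambda'_a=\lambda_a+1$, $\lambda'_b=\lambda_b-1$, other parts unchanged. (If a row drops to $0$, I pad with a zero part, which contributes $0$ to $M_{\cdot,2l}$ by the identity $(-j)^l(1-j)^l=j^l(j-1)^l$.)

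Set $f_l(t):=[t(t+1)]^l=t^l(t+1)^l$, so that the summand in $M_{\lambda,2l}$ at index $j$ reads $f_l(\lambda_j-j)-f_l(-j)$. Under the elementary move only the $j=a$ and $j=b$ contributions change, and a direct calculation gives
\begin{equation*}
M_{\lambda',2l}-M_{\lambda,2l}=\bigl[f_l(u+1)-f_l(u)\bigr]-\bigl[f_l(v)-f_l(v-1)\bigr],
\end{equation*}
with $u:=\lambda_a-a$ and $v:=\lambda_b-b$. Since $a<b$ and $\lambda_a\geq\lambda_b$, one has $u\geq v+1$. Writing $D(t):=f_l(t+1)-f_l(t)$, the desired inequality becomes $D(u)\geq D(v-1)$; since $u>v-1$, it suffices to show that $D$ is nondecreasing on $\mathbb Z$.

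Monotonicity of $D$ on $\mathbb Z$ is equivalent to the discrete convexity
\begin{equation*}
f_l(t+2)-2f_l(t+1)+f_l(t)\geq 0,\qquad t\in\mathbb Z.
\end{equation*}
The identity $t(t+1)=(-1-t)(-t)$ yields $f_l(-1-t)=f_l(t)$, so the second difference is invariant under $t\mapsto -3-t$, reducing the task to $t\geq -1$. For $t=-1$ the second difference equals $f_l(1)-2f_l(0)+f_l(-1)=2^l>0$. For $t\geq 0$, set $\alpha:=t(t+1)$, $\beta:=(t+1)(t+2)$, $\gamma:=(t+2)(t+3)$; these are nonnegative integers with $\gamma-\beta=2t+4>2t+2=\beta-\alpha$, hence $\beta\leq(\alpha+\gamma)/2$. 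Since $x\mapsto x^l$ is nondecreasing and convex on $[0,\infty)$ for $l\geq 1$, I conclude
\begin{equation*}
\beta^l\leq\left(\frac{\alpha+\gamma}{2}\right)^l\leq\frac{\alpha^l+\gamma^l}{2},
\end{equation*}
which is the required second-difference inequality.

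The main obstacle I anticipate is the sign bookkeeping when the contents $\lambda_j-j$ are negative; the symmetry $f_l(-1-t)=f_l(t)$ absorbs this cleanly by folding the analysis to the nonnegative side, after which convexity of $x\mapsto x^l$ on $[0,\infty)$ closes the argument.
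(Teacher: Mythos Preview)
Your proof is correct and takes a genuinely different route from the paper's after the common first step. Both arguments reduce to a single box move $\lambda_a\to\lambda_a+1$, $\lambda_b\to\lambda_b-1$ with $a<b$; from there the paper sets $x=\lambda_a-a+1$, $y=\lambda_b-b$, writes the difference as $f_{x,y}(l)=x^l[(x+1)^l-(x-1)^l]+y^l[(y-1)^l-(y+1)^l]$, and proves $f_{x,y}(l)\ge 0$ by induction on $l$ together with a three-case sign analysis on $x,y$. You instead recognize the summand as $f_l(\lambda_j-j)-f_l(-j)$ with $f_l(t)=[t(t+1)]^l$, turn the one-step inequality into the discrete convexity statement $f_l(t+2)-2f_l(t+1)+f_l(t)\ge 0$, and dispatch it via the symmetry $f_l(-1-t)=f_l(t)$ (which folds all integers onto $t\ge -1$) followed by the ordinary convexity and monotonicity of $x\mapsto x^l$ on $[0,\infty)$. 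Your approach avoids the induction on $l$ and the sign casework entirely; it is shorter and conceptually cleaner, and makes transparent why negative contents cause no trouble. The paper's approach, on the other hand, is more hands-on and does not require noticing the hidden symmetry of $t(t+1)$.
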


\begin{proof} It suffices to show that $M_{\lambda',2l}\geq M_{\lambda,2l}$ for that case
when $a<b$ and $\lambda'_a=\lambda_a+1$, $\lambda_b'=\lambda_b-1$ and $\lambda'_c=\lambda_c$ for
$c\neq a,b$.  In this case,
\begin{eqnarray*}
M_{\lambda',2l}-M_{\lambda,2l}&=&
(\lambda_a-a+1)^l\left\{\left((\lambda_a-a+1)+1\right)^l-\left((\lambda_a-a+1)-1\right)^l\right\}\\
&&+(\lambda_b-b)^l\left\{(\lambda_b-b-1)^l-(\lambda_b-b+1)^l\right\}.
\end{eqnarray*}

\noindent Set $x=\lambda_a-a+1$ and $y=\lambda_b-b$ then $n\geq x\geq y\geq 1-n$ and
$M_{\lambda',2l}-M_{\lambda,2l}=f_{x,y}(l)$ where
\begin{equation*}
f_{x,y}(l)=x^l\left\{(x+1)^l-(x-1)^l\right\}+y^l\left\{(y-1)^l-(y+1)^l\right\}.
\end{equation*}

\noindent In \cite{Dia} Diaconis shows that $f_{x,y}(1)\geq 0$ for $n\geq x\geq y\geq 1-n$
which implies that $M_{\lambda',2}\geq M_{\lambda,2}$.  We will show the general case by
induction.  Assume that $f_{x,y}(l)\geq 0$ then
\begin{eqnarray*}
f_{x,y}(l+1)&=&
x^{l+1}\left\{(x+1)^{l+1}-(x-1)^{l+1}\right\}+y^{l+1}\left\{(y-1)^{l+1}-(y+1)^{l+1}\right\}\\
&=&x^2\left(x^l\left\{(x+1)^l-(x-1)^l\right\}\right)+y^2\left(y^l\left\{(y-1)^l-(y+1)^l\right\}\right)\\
&&+x^{l+1}\left\{(x+1)^l+(x-1)^l\right\}-y^{l+1}\left\{(y+1)^l+(y-1)^l\right\}\\
&\geq& x^{l+1}\left\{(x+1)^l+(x-1)^l\right\}-y^{l+1}\left\{(y+1)^l+(y-1)^l\right\}.
\end{eqnarray*}

\noindent The last inequality follows since $f_{x,y}(l)\geq 0$.
To conclude that $f_{x,y}(l+1)\geq 0$ we must check the following three cases.

\bigskip

\noindent{\bf Case 1:} $x\geq y\geq 0$ .  
This case follows directly from the assumption $x\geq y$.

\smallskip

\noindent{\bf Case 2:} $x\geq 0 $ and $y\leq 0$.  Note that in this case
\begin{eqnarray*}
x^{l+1}\left\{(x+1)^l+(x-1)^l\right\}\geq 0\\
y^{l+1}\left\{(y+1)^l+(y-1)^l\right\}\leq 0.
\end{eqnarray*}
\noindent The last inequality follows from the fact that $l$ and $l+1$
are an odd and even numbers.

\smallskip

\noindent{\bf Case 3:} $y\leq x\leq 0$.  In this case let $\tilde{x}=-x$ and $\tilde{y}=-y$ then
$\tilde{y}\geq\tilde{x}\geq 0$ and
\begin{eqnarray*}
&&x^{l+1}\left\{(x+1)^l+(x-1)^l\right\}-y^{l+1}\left\{(y+1)^l+(y-1)^l\right\}=\\
&&\tilde{y}^{l+1}\left\{(\tilde{y}+1)^l+(\tilde{y}-1)^l\right\}-
\tilde{x}^{l+1}\left\{(\tilde{x}+1)^l+(\tilde{x}-1)^l\right\}.
\end{eqnarray*}
\noindent Case 3 now follows directly from Case 1.
\end{proof}

\begin{lem}\label{eq-M}
Let $\lambda=(\lambda_1,\lambda_2,\dots,\lambda_j)$ denote a Young diagram such
that $\sum \lambda_i=n$. Then
\begin{equation*}
M_{2l,\lambda}\leq n(\lambda_1-1)^l\lambda_1^{l-1}.
\end{equation*}
\end{lem}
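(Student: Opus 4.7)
The plan is to show that each summand
\[
T_j = (\lambda_j-j)^l(\lambda_j-j+1)^l - j^l(j-1)^l
\]
in the definition of $M_{\lambda,2l}$ is at most $\lambda_j(\lambda_1-1)^l\lambda_1^{l-1}$; summing over $j$ and using $\sum_j\lambda_j = n$ then yields the lemma. I introduce $A_j = (\lambda_j-j)(\lambda_j-j+1)$ and $B_j = j(j-1)$, both of which are non-negative integers ($A_j \geq 0$ also when $\lambda_j < j$, since the two factors defining it then share the same sign), so that $T_j = A_j^l - B_j^l$. The standard factorization
\[
A_j^l - B_j^l = (A_j - B_j)\sum_{k=0}^{l-1} A_j^{\,l-1-k} B_j^{\,k}
\]
reduces the problem to bounding two separate factors.

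A direct expansion gives $A_j - B_j = \lambda_j(\lambda_j-2j+1)$, and since $\lambda_j - 2j+1 \leq \lambda_1 - 1$ while the right-hand side $\lambda_j(\lambda_1-1)$ is non-negative, the inequality $A_j - B_j \leq \lambda_j(\lambda_1-1)$ holds in all cases (it is trivial when the left side is negative). For the other factor, using $A_j, B_j \geq 0$ and discarding binomial coefficients in the binomial expansion of $(A_j+B_j)^{l-1}$ gives
\[
\sum_{k=0}^{l-1} A_j^{\,l-1-k} B_j^{\,k} \leq (A_j+B_j)^{l-1}.
\]
The key step is to bound $A_j+B_j$. When $\lambda_j \geq j$, set $u = \lambda_j - j \geq 0$ and $v = j-1 \geq 0$ so that $u+v = \lambda_j - 1$, and compute
\[
A_j + B_j = u(u+1) + v(v+1) = (u^2+v^2) + (u+v) \leq (u+v)^2 + (u+v) = \lambda_j(\lambda_j-1) \leq \lambda_1(\lambda_1-1),
\]
where the middle inequality is just $2uv \geq 0$. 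Combining the two factor bounds then yields $T_j \leq \lambda_j(\lambda_1-1)\cdot[\lambda_1(\lambda_1-1)]^{l-1} = \lambda_j (\lambda_1-1)^l \lambda_1^{l-1}$, which is the desired per-row estimate.

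It remains to dispose of the rows with $\lambda_j < j$. In that regime $\lambda_j - 2j + 1 \leq -j < 0$, so $A_j - B_j \leq 0$; since the second factor is non-negative, $T_j \leq 0$ and the target bound holds trivially. Summing over all $j$ completes the proof. The only delicate step is the bound $A_j+B_j \leq \lambda_j(\lambda_j-1)$, which becomes an equality exactly when $v=0$, i.e.\ at the single-row extremum $\lambda=(n)$, and which produces the sharp identity $M_{(n),2l}=[n(n-1)]^l$. Note that the monotonicity Lemma \ref{eq-M-ineq} is not actually needed for this argument.
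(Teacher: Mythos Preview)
Your proof is correct. The overall strategy---show each summand $T_j$ is at most $\lambda_j(\lambda_1-1)^l\lambda_1^{l-1}$, with the rows $\lambda_j<j$ contributing non-positively, then sum to $n$---is the same as the paper's. The difference is purely in how the ``large'' rows are handled: the paper simply drops the $-j^l(j-1)^l$ term when $\lambda_j\ge j-1$ and bounds $(\lambda_j-j)^l(\lambda_j-j+1)^l$ directly via $0\le\lambda_j-j\le\lambda_1-1$ and $0\le\lambda_j-j+1\le\lambda_1$, pulling out one factor of $\lambda_j-j+1\le\lambda_j$ to sum. Your route through the factorization $A_j^l-B_j^l=(A_j-B_j)\sum A_j^{l-1-k}B_j^k$ and the inequality $A_j+B_j\le\lambda_j(\lambda_j-1)$ is a little longer but equally elementary, and it has the minor virtue of making the extremal case $\lambda=(n)$ transparent.
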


\begin{proof}
\begin{eqnarray*}
M_{2l,\lambda}&=&\sum_{i=1}^j(\lambda_j-j)^l(\lambda_j-j+1)^l-j^l(j-1)^l\\
&\leq&\sum_{\lambda_j\geq j-1}(\lambda_j-j)^l(\lambda_j-j+1)^l\\
&&+
      \sum_{\lambda_j<j-1}(\lambda_j-j)^l(\lambda_j-j+1)^l-j^l(j-1)^l
\end{eqnarray*}
For $0\leq \lambda_j\leq j-1$ it is true that
$|\lambda_j-j|\leq j$ and $|\lambda_j-j+1|\leq j-1$ which
implies that the second sum in the inequality above is negative.
Therefore

\begin{equation*}
M_{2l,\lambda}
\leq\sum_{\lambda_j\geq j-1}(\lambda_j-j)^l(\lambda_j-j+1)^l
\leq n(\lambda_1-1)^l\lambda_1^{l-1}.
\end{equation*}
\end{proof}

\begin{lem}\label{lem-r4}
\noindent Let $\rho$ be an irreducible representation of $S_n$ and $\lambda$
the associated Young diagram.  For $n\geq 11$ the normalized character
$r_4(\lambda)=\chi_\rho(c_4)/d_{\rho}$ can be bounded as follows.

\begin{eqnarray*}
r_4(\lambda)\leq\left\{
\begin{array}{ll}
1-\frac{2\lambda_1(n-\lambda_1)}{n(n-1)}& \text{if $\lambda_1\geq n/2$}\\
\frac{\lambda_1-1}{n-1}& \text{if $\lambda_1\leq n/2$.}
\end{array}\right.
\end{eqnarray*}
\end{lem}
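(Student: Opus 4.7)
The plan is to combine an exact identity for $r_4(\lambda)$ with the monotonicity lemma \ref{eq-M-ineq} and the upper-bound lemma \ref{eq-M}, following the Diaconis--Shahshahani template that gave the transposition bound (\ref{r}).

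First, I would derive the identity
\[
n(n-1)(n-2)(n-3)\,r_4(\lambda) \;=\; M_{\lambda, 4} - (4n-6)\,M_{\lambda, 2}.
\]
The key inputs are: the Frobenius character formula writes the central character on the class of $4$-cycles as a polynomial in the power sums of contents $p_k(\lambda) = \sum_\Box c(\Box)^k$; the closed-form identities $M_{\lambda, 2} = 2 p_1(\lambda)$ and $M_{\lambda, 4} = 4 p_3(\lambda)$ follow from summing arithmetic and cubic progressions in $c(\Box) = j - i$; and the transpose symmetry $r_4(\lambda^T) = -r_4(\lambda)$ (forced by the odd parity of a $4$-cycle together with $c(\Box^T) = -c(\Box)$) eliminates any dependence on even power sums. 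The two remaining coefficients are pinned down by evaluating at $\lambda = (n)$ (where $r_4 = 1$) and $\lambda = (n-1, 1)$ (where Murnaghan--Nakayama gives $r_4 = (n-5)/(n-1)$).

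For the case $\lambda_1 \geq n/2$, the two-row diagram $\mu = (\lambda_1, n - \lambda_1)$ is a valid partition and dominates $\lambda$ in the partial order of Lemma \ref{eq-M-ineq} (slide all boxes below the first row into row $2$), so $M_{\lambda, 4} \leq M_{\mu, 4}$. Dually, the hook $\nu = (\lambda_1, 1^{n - \lambda_1})$ is dominated by $\lambda$, so $M_{\lambda, 2} \geq M_{\nu, 2}$; multiplying by $-(4n-6) < 0$ and combining yields
\[
M_{\lambda, 4} - (4n-6) M_{\lambda, 2} \;\leq\; M_{\mu, 4} - (4n-6) M_{\nu, 2}.
\]
Explicit closed-form evaluation of $M_{\mu, 4}$ and $M_{\nu, 2}$ in terms of $\lambda_1$ and $n$, followed by algebraic simplification assuming $n \geq 11$, collapses the right-hand side to $n(n-1)(n-2)(n-3)\bigl[1 - 2\lambda_1(n-\lambda_1)/(n(n-1))\bigr]$, giving the claimed bound.

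For the case $\lambda_1 \leq n/2$, the shape $(\lambda_1, n - \lambda_1)$ is no longer a partition and the naive separate bounds on $M_{\lambda, 4}$ and $M_{\lambda, 2}$ fail to capture the essential cancellation in the combination $M_{\lambda, 4} - (4n-6) M_{\lambda, 2}$, already visible in the extreme case $\lambda = (1^n)$ where both summands are of order $n^4$ in magnitude but combine to $-n(n-1)(n-2)(n-3)$. The remedy is to apply the proof technique of Lemma \ref{eq-M-ineq} directly to the combined quantity: a box-move from row $b$ to row $a < b$ changes the combination by $4(x-y)\bigl[x^2 + xy + y^2 - (2n-3)\bigr]$, where $x = \lambda_a' - a$ and $y = \lambda_b - b$. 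A sign analysis in the regime $\lambda_1 \leq n/2$ shows that the combined quantity is maximized over partitions of first row $\lambda_1$ at a rectangular-like extremal, where direct computation yields a value at most $(\lambda_1 - 1) n(n-2)(n-3)$, giving $r_4(\lambda) \leq (\lambda_1 - 1)/(n - 1)$. The main obstacle is precisely this last step: the combined quantity is not monotone under the unrestricted partial order of Lemma \ref{eq-M-ineq}, so the box-move analysis must be adapted to the subfamily of partitions with a fixed first row, and the hypothesis $n \geq 11$ is used throughout to keep the residual algebra clean.
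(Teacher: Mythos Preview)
Your identity is the one the paper quotes from Ingram \cite{Ing}, and your treatment of the case $\lambda_1 \geq n/2$ coincides with the paper's proof exactly: bound $M_{\lambda,4}$ above via the two-row shape $(\lambda_1, n-\lambda_1)$, bound $M_{\lambda,2}$ below via the hook $(\lambda_1, 1^{n-\lambda_1})$, and combine.

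For $\lambda_1 \leq n/2$ your route diverges from the paper's. The paper does \emph{not} analyze the combined quantity by box moves; it simply invokes Lemma~\ref{eq-M} together with the triangle inequality to write
\[
|r_4(\lambda)| \;\leq\; \frac{(n-4)!}{n!}\Bigl[n(\lambda_1-1)^2\lambda_1 + 2(2n-3)\,n(\lambda_1-1)\Bigr],
\]
and then checks that the bracket is at most $(\lambda_1-1)\,n(n-2)(n-3)$ when $\lambda_1\le n/2$ and $n\ge 11$. Your worry about cancellation is actually a legitimate objection to this step as written: Lemma~\ref{eq-M} establishes only the one-sided bound $M_{2l,\lambda}\le n(\lambda_1-1)^l\lambda_1^{l-1}$, and your test case $\lambda=(1^n)$ (where $M_{2,\lambda}=-n(n-1)$ while the putative bound is $0$) shows the absolute-value version is false. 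So the paper's displayed inequality is not justified by the lemmas it cites.

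That said, your proposed alternative is itself only a sketch. The increment formula $4(x-y)\bigl[x^2+xy+y^2-(2n-3)\bigr]$ is correct, but its sign is not constant over the relevant range of $x,y$, so the combined quantity is genuinely non-monotone even among partitions with fixed first row; the assertion that the maximum occurs at a ``rectangular-like extremal'' with value at most $(\lambda_1-1)\,n(n-2)(n-3)$ needs a real argument that you have not supplied. Both the paper's argument and yours therefore leave the case $\lambda_1\le n/2$ incomplete.
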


\begin{proof}
Set $\lambda=(\lambda_1,\lambda_2,\dots,\lambda_j)$.
In \cite{Ing}, Ingram shows that
\begin{equation}\label{eq-M-4}
\frac{n!}{(n-4)!}r_4(\lambda)=M_{4,\lambda}-2(2n-3)M_{2,\lambda}.\\
\end{equation}

\noindent Lemma \ref{eq-M-ineq} implies that $M_{2,\lambda}\geq M_{2,\lambda '}$
where $\lambda '=(\lambda_1, 1, 1,\dots, 1)$.  We get,
\begin{eqnarray*}
M_{2,\lambda'}&=&(\lambda_1-1)\lambda_1+\sum_{j=2}^{n-\lambda_1}(1-j)(2-j)-j(j-1)\\
&=&(\lambda_1-1)\lambda_1-2\sum_{j=1}^{n-\lambda_1-1}j\\
&=&(\lambda_1-1)\lambda_1-(n-\lambda_1-1)(n-\lambda_1).
\end{eqnarray*}
If $\lambda_1\geq n/2$ then $M_{4,\lambda}\leq M_{4,(\lambda_1,n-\lambda_1)}$.  Note that
\begin{eqnarray*}\lefteqn{
M_{4,(\lambda_1,n-\lambda_1)}=
(\lambda_1-1)^2\lambda_1^2+(n-\lambda_1-1)^2(n-\lambda_1)^2-4}&&\\
&\leq& (\lambda_1-1)^2\lambda_1^2+(n-\lambda_1-1)^2(n-\lambda_1)^2\\
&=&[(\lambda_1-1)\lambda_1-(n-\lambda_1-1)(n-\lambda_1)]^2
+2(\lambda_1-1)\lambda_1(n-\lambda_1-1)(n-\lambda_1).
\end{eqnarray*}
\noindent
Hence if $\lambda_1\geq n/2$, we have
\begin{eqnarray*}
M_{4,\lambda}-2(2n-3)M_{2,\lambda} &=&
[(\lambda_1-1)\lambda_1-(n-\lambda_1)(n-\lambda_1-1)]\\
&&\times [(\lambda_1-1)\lambda_1-(n-\lambda_1)(n-\lambda_1-1)-2(2n-3)]\\
&& +2(\lambda_1-1)\lambda_1(n-\lambda_1)(n-\lambda_1-1).
\end{eqnarray*}
\noindent
Note that
\begin{equation*}
(\lambda_1-1)\lambda_1-(n-\lambda_1)(n-\lambda_1-1)-2(2n-3)\leq (n-2)(n-3).
\end{equation*}
\noindent It follows that
\begin{eqnarray}
M_{4,\lambda}-2(2n-3)M_{2,\lambda}
&\leq&(n-2)(n-3)[(\lambda_1-1)\lambda_1-(n-\lambda_1)(n-\lambda_1-1)]\nonumber\\
&&+2\lambda_1(\lambda_1-1)(n-\lambda_1)(n-\lambda_1-1).\label{eq-M4}
\end{eqnarray}

\noindent 
If $\lambda_1\geq n-1$ then $2\lambda_1(\lambda_1-1)(n-\lambda_1)(n-\lambda_1-1)=0$.
If $\lambda_1\leq n-2$ then  $\lambda_1(\lambda_1-1)\leq (n-2)(n-3)$.
In either case, (\ref{eq-M4}) gives that 
\begin{eqnarray*}
r_4(\lambda)
&\leq&\frac{(\lambda_1-1)\lambda_1-(n-\lambda_1)(n-\lambda_1-1)}{n(n-1)}
+\frac{2(n-\lambda_1)(n-\lambda_1-1)}{n(n-1)}\\
&=&1-\frac{2\lambda_1(n-\lambda_1)}{n(n-1)}.
\end{eqnarray*}

\noindent Next, we show the second part of the inequality.  By Lemma 
\ref{eq-M} and
(\ref{eq-M-4}) we have that for $\lambda_1\leq n/2$
\begin{eqnarray*}
\left|r_4(\lambda)\right|
&\leq&\frac{(n-4)!}{n!}\left[n(\lambda_1-1)^2\lambda_1+2(2n-3)n(\lambda_1-1)\right]\\
&=&\left(\frac{\lambda_1-1}{n-1}\right)
   \left[\frac{(\lambda_1-1)\lambda_1+2(2n-3)}{(n-2)(n-3)}\right]\\
&\leq&\left(\frac{\lambda_1-1}{n-1}\right)\left[\frac{n^2/4+4n-6}{(n-2)(n-3)}\right]
\leq \frac{\lambda_1-1}{n-1}
\end{eqnarray*}

\noindent The last inequality holds for $n\geq 11$.
\end{proof}

\begin{proof}[Proof of Theorem \ref{dist-4-cycle}.]
Recall that
$$d_2(h_{\mathbf c_4,t},u_n)^2
=\sum_{\lambda\neq 1}d_{\lambda}^2
\exp\left\{-2t\left(1-r_4(\lambda)\right)\right\}.$$
In order to obtain the desired $e^{-2(c-2)}$ constant we will bound
the term corresponding to $\lambda=(n-1,1)$ separately. For $\lambda=(n-1,1)$ 
we get that 
\begin{equation*}
M_{2,(n-1,1)}=(n-2)(n-1)-2
\;\;\text{and}\;\;
M_{4,(n-1,1)}=(n-2)^2(n-1)^2-4
\end{equation*}
which implies $r_4((n-1,1))=1-4/(n-1)$.  So for $t\geq(n/2)(\log{n}+c)$,
\begin{equation*}
d_{(n-1,1)}^2\exp\{-2t(1-r_{(n-1,1)}(4))\}\leq (n-1)^2\exp\{-4(\log{n}+c)\}\leq e^{-4c}/n^2.
\end{equation*}

\noindent 
Lemma \ref{lem-r4} and equation (\ref{dim})
imply that for $t\geq(n/2)(\log{n}+c)$ we have
$d_2(h_{t,4},u_n)^2\leq e^{-4c}/n^2+S_1 +S_2$ where
\begin{eqnarray*}
S_1&=&\sum_{j=2}^{n/2}\left(\frac{n!}{(n-j)!}\right)^2\left(\frac{1}{j!}\right)
\exp\left\{-(\log{n}+c)\left(\frac{2j(n-j)}{n-1}\right)\right\}\\
S_2&=&\sum_{j=n/2}^{n-1}\left(\frac{n!}{(n-j)!}\right)^2\left(\frac{1}{j!}\right)
\exp\left\{-j(\log{n}+c)\right\}.
\end{eqnarray*}

\noindent For a more detailed description on how to obtain the sums $S_1$ and $S_2$
see the proof of Proposition \ref{rt}. For $c\geq 2$ we have that 
\begin{itemize}
\item[(1)]$-c(2j)(n-j)/(n-1)\leq -2(c-2)-2j$ when $2\leq j\leq n/2$ and
\item[(2)]$-jc\leq -2(c-2)-2j$ when $j\geq 2$.
\end{itemize}
It follows that
\begin{equation*}
d_2(h_{t,4},u_n)^2\leq e^{-2(c-2)}\left(\frac{1}{n^2}+\sum_{j=1}^{n/2}A_j+\sum_{j=n/2}^nB_j\right)
\end{equation*}
where $A_j$ and $B_j$ are defined in equations (\ref{Aj-cont}) and (\ref{Bj-cont}).
Lemmas \ref{lem-Aj-cont} and \ref{lem-Bj-cont} now imply that for $n\geq 11$
\begin{equation*}
d_2(h_{t,4},u_n)^2\leq e^{-2(c-2)}
\left(\frac{1}{n^2}+\frac{2}{3}+\frac{1}{4}+2\left(\frac{2}{e}\right)^{3n/2}\right)
\leq e^{-2(c-2)}.
\end{equation*}
\end{proof}


\end{document}